\theoremstyle{plain}
\newtheorem{theorem}{Theorem}[section]
\newtheorem{proposition}[theorem]{Proposition}
\newtheorem{lemma}[theorem]{Lemma}
\newtheorem{corollary}[theorem]{Corollary}
\newtheorem{maintheorem}{Theorem}
\theoremstyle{definition}
\newtheorem{remark}[theorem]{Remark}
\newtheorem{example}[theorem]{Example}
\newtheorem{definition}[theorem]{Definition}
\newtheorem{question}{Question}
\newcommand{\field}[1]{\mathbb{#1}}
\newcommand{\RR}{\field{R}}
\newcommand{\CC}{\field{C}}
\newcommand{\ZZ}{\field{Z}}
\newcommand{\NN}{\field{N}}
\newcommand{\SSS}{\mathbb{S}}
\newcommand{\KK}{\field{K}}
\newcommand{\PP}{\field{P}}
\newcommand{\ga} {\gamma}    \newcommand{\Ga}{\Gamma}
\newcommand{\ep} {\varepsilon}
\newcommand{\ro}{\rho}
\newcommand{\si} {\sigma}
\newcommand{\om} {\omega}       
\newcommand{\cU}{{\mathcal U}}
\newcommand{\cP}{{\mathcal P}}
\newcommand{\cD}{{\mathcal D}}
\newcommand{\cE}{{\mathcal E}}
\newcommand{\diam}{\operatorname{{diam}}}
\newcommand{\graph}{\operatorname{{graph}}}
\newcommand{\supp}{\operatorname{{supp}}}
\title{Metric mean dimension via subshifts of compact type}
\begin{document}



\author[Gustavo Pessil]{Gustavo Pessil}
\address{CMUP \& Departamento de Matem\'atica, Faculdade de Ci\^encias da Universidade do Porto, Rua do Campo Alegre 687, Porto, Portugal.}
\email{Gustavo.pessil@outlook.com}



\keywords{Metric mean dimension; Subshift; Box dimension
}
\subjclass[2010]{Primary:
37D35, 
28D20, 
37B40, 
37C85, 
37B10. 
}

\date{\today}

\begin{abstract}

We investigate the metric mean dimension of subshifts of compact type. We prove that the metric mean dimensions of a continuous map and its inverse limit coincide, generalizing Bowen's entropy formula. Building upon this result, we extend the notion of metric mean dimension to discontinuous maps in terms of suitable subshifts. As an application, we show that the metric mean dimension of the Gauss map and that of induced maps of the Manneville-Pomeau family is equal to the box dimension of the corresponding set of discontinuity points, which also coincides with a critical parameter of the pressure operator associated to the geometric potential.

\end{abstract}

\maketitle

\tiny
\tableofcontents
\normalsize

\section{Introduction}

Symbolic dynamics on finite alphabets are classical mathematical objects that have brought forward a great variety of dynamics and intervened in major achievements in Dynamical Systems and Ergodic Theory. The main invariant in these areas is the entropy, which may be expressed through both a topological and a measure-theoretical perspective. For subshifts on finite alphabets the topological entropy quantifies the exponential growth rate of the number of finite words of fixed length. Furthermore, for a Markov subshift the entropy equals the logarithm of the spectral radius of the generating graph, which may be read as the spectral radius of a linear operator.

For compact alphabets this interpretation of complexity in terms of counting words is no longer feasible. Nevertheless, we may still follow the same guiding principle by using a dimensional approach. More precisely, for a given scale, one identifies all but finitely many letters prior to counting and defines the entropy at that scale; the topological entropy is thus obtained by refining the scale. However, in this setting, its value may be infinite. The metric mean dimension introduced by E. Lindenstrauss and B. Weiss in \cite{LW2000} is a geometric invariant which is useful to distinguish precisely those systems with infinite topological entropy. It measures the speed at which the entropy at a given scale goes to infinity as the scale approaches zero, and it can be seen as a dynamical analogue of the box dimension. For example, the metric mean dimension of a full shift on a compact alphabet, endowed with a suitable metric, is exactly the box dimension of the alphabet.

There is an asymmetry in the definition of topological entropy since it looks only at the future orbits of points, and the same happens with the metric mean dimension. When the map is invertible, it turns out that its inverse has the same entropy. On the contrary, when the map is not invertible, the definition of entropy cannot be reversed in time. Yet, a non-invertible map induces a shift homeomorphism on the corresponding inverse limit space, and Bowen showed in \cite{Bowen} that the entropy of such a shift homeomorphism is equal to the entropy of the original map.

Generalizing the class of subshifts of finite type, Friedland introduced in \cite{Fried} the analogous concept within the compact alphabet setting, which we refer to as subshifts of compact type, and proved that the topological entropy of the unilateral and bilateral subshifts induced by a closed transition set coincide. This relation was already known for subshifts of finite type on finite alphabets, in which case the entropy is equal to the spectral radius of the transition matrix. In the particular case where the transition set is the graph of a continuous map, the associated bilateral subshift is a reformulation of the inverse limit of the map. Altogether, Friedland's results generalize Bowen's formula to arbitrary subshifts of compact type. In this paper we shall establish analogous properties for the metric mean dimension (see Theorems~\ref{n=z} and \ref{corodef}). As a byproduct, we provide an upper bound for the metric mean dimension of an arbitrary map acting on a compact metric space in terms of spectral radii of scale-dependent matrices (see Section~\ref{specradiussection}). 

An application of our results, which is of independent interest, is that they motivate a definition of metric mean dimension for discontinuous maps on compact metric spaces in terms of suitable subshifts of compact type. To illustrate the scope of this new concept, we will show that the metric mean dimension of an interval map with infinitely many full branches coincides with the box dimension of its sets of critical points (see Theorem~\ref{gausslikethm}). In particular, this setting comprises the Gauss map and the Young-induced maps of the Manneville-Pomeau family.

The paper is organized as follows. In the remainder of this section we will briefly recall a few definitions, state our main results and address some applications. In Sections~\ref{prelim} and \ref{specradiussection} we collect some auxiliary material. Sections~\ref{proofA}, \ref{proofB}, \ref{proofcor}, \ref{proofC} are devoted to the proofs.

\subsection{Subshifts of compact type} Let $(X,d)$ be a compact metric space and $\KK=\NN$ or $\ZZ$. In the rest of the paper, we fix some $\ro>1$ and endow the product space $X^\KK$ with the metric \begin{equation}\label{metricshift}
d^\KK(x,y)\,=\,\sup_{i\,\in\,\KK}\frac{d(x_i,y_i)}{\ro^{|i-1|}}.
\end{equation} 
 Unless stated otherwise, we will always consider $\ro=2$. A subset $Y\subset X^\KK$ is called a \emph{subshift} if it is closed and invariant by the shift action  $$\si_\KK\big( (x_i)_{i\in \KK} \big) = (x_{i+1})_{i\in \KK}.$$ We recall that the metric mean dimension of the full shift is given by (see the precise definitions in Section~\ref{prelim})\begin{equation}\label{fullshift}
    \begin{split}    \overline{\mathrm{mdim}}_M(X^\KK,d^\KK,\si_\KK)\,&=\, \overline{\mathrm{dim}}_B(X,d)\\\underline{\mathrm{mdim}}_M(X^\KK,d^\KK,\si_\KK)\,&=\, \underline{\mathrm{dim}}_B(X,d)
    \end{split}
\end{equation}(see \cite[Theorem 5]{VV} and also \cite[Theorem D]{CPV} for a version with potential).

Consider a subset $\Ga\,\subset X\times X$, whose role will be to to prescribe transitions, and the induced set of admissible sequences $$\Ga_\KK\,=\,\{ (x_i)_{i\in \KK}\,\colon (x_i,x_{i+1})\in\Ga,\,\,\forall\, i\in\KK \},$$ 
which is invariant and we always assume to be non-empty. We refer to $\overline{\Ga_\KK}$ as a \emph{subshift of compact type} and note that $\Ga_\KK$ is already closed whenever $\Ga$ is. This model was proposed by Friedland (cf. \cite{Fried}) in order to assign a notion of entropy to finitely generated free semigroup actions. We refer the reader to Section~\ref{proofB} for more information regarding such actions.

It was proved in \cite[Theorem 3.1]{Fried} that the topological entropy of the unilateral and bilateral subshifts induced by a given closed set coincide. Our first result shows that this also holds in the case of the metric mean dimension.

\begin{maintheorem}\label{n=z}
Let $(X,d)$ be a compact metric space and $\Ga\subset X\times X$ be a closed subset. Then, \begin{eqnarray*}     \overline{\mathrm{mdim}}_M(\Ga_\NN,d^\NN,\si_\NN)\,&=&\, \overline{\mathrm{mdim}}_M(\Ga_\ZZ,d^\ZZ,\si_\ZZ) \\ \underline{\mathrm{mdim}}_M(\Ga_\NN,d^\NN,\si_\NN)\,&=&\, \underline{\mathrm{mdim}}_M(\Ga_\ZZ,d^\ZZ,\si_\ZZ).
\end{eqnarray*}
\end{maintheorem}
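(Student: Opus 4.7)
The plan is to prove the theorem by comparing $(n,\ep)$-covering numbers scale by scale and showing that the two sides of the proposed identity differ by at most a factor independent of $n$, which vanishes upon dividing by $n$. Both inequalities will be established using the exponential decay of the weights $\ro^{-|i-1|}$ in \eqref{metricshift}.

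For $\overline{\mathrm{mdim}}_M(\Ga_\NN)\le \overline{\mathrm{mdim}}_M(\Ga_\ZZ)$, I would first consider the shift-equivariant projection $\pi \colon \Ga_\ZZ \to \Ga_\NN$, $\pi\bigl((y_i)_{i\in\ZZ}\bigr) = (y_i)_{i\ge 1}$, which is $1$-Lipschitz by construction of the metric. Since spanning sets project to spanning sets, one obtains $\overline{\mathrm{mdim}}_M(\pi(\Ga_\ZZ)) \le \overline{\mathrm{mdim}}_M(\Ga_\ZZ)$. The image $\pi(\Ga_\ZZ)$ coincides with $\bigcap_{k\ge 0} \si_\NN^k(\Ga_\NN)$, the asymptotic core of $(\Ga_\NN, \si_\NN)$; a standard compactness argument, paralleling Bowen's for topological entropy, then shows that the metric mean dimension of $\si_\NN$ on $\Ga_\NN$ agrees with that of its restriction to this core, completing the direction.

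For the reverse inequality, the key ingredient is the pointwise identity
\[
d^\ZZ_n(y,z) \;=\; \max\Bigl( d^\NN_n(\pi y, \pi z),\ \sup_{j\le 0} \ro^{j-1}\, d(y_j, z_j)\Bigr),
\]
which splits the Bowen metric on $\Ga_\ZZ$ into a future piece (the Bowen metric on $\Ga_\NN$) and a past piece depending only on the negative coordinates. A direct calculation on \eqref{metricshift} shows that for $d^\ZZ_n(y,z) \le \ep$ only the $L(\ep) := O(\log(1/\ep))$ past indices $j \in [2-L(\ep),0]$ impose non-trivial constraints, the $j$-th at precision $\ep\ro^{1-j}$. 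Given an $(n,\ep)$-spanning set $F \subset \pi(\Ga_\ZZ)$, extending each $f\in F$ by all admissible past strings sampled at those precisions produces an $(n,C\ep)$-spanning set of $\Ga_\ZZ$ of cardinality at most $|F|\cdot C(\ep)$, where $C(\ep) \le \prod_{k=1}^{L(\ep)} r_X(\ep\ro^k) \le r_X(\ep)^{L(\ep)}$ is independent of $n$. Taking $\log$, dividing by $n$, and passing to $n\to\infty$ kills the $\log C(\ep)$ contribution, yielding $h_{C\ep}(\Ga_\ZZ) \le h_\ep(\Ga_\NN)$ at every scale; a final division by $|\log\ep|$ and the limit $\ep \to 0$ preserve the inequality (since $|\log C\ep|/|\log\ep| \to 1$) for both the $\limsup$ and the $\liminf$ variants.

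The main obstacle will be the past-extension step: for each $f\in F$ one must build an efficient cover of the $\Ga$-compatible pasts ending at $f$ at the geometrically decreasing precisions $\ep\ro^k$, ensuring that $C(\ep)$ remains polylogarithmic in $1/\ep$. The bound $r_X(\ep\ro^k)\le r_X(\ep)$ immediately gives $C(\ep)\le r_X(\ep)^{O(\log(1/\ep))}$, hence $\log C(\ep)/n \to 0$; the subtlety lies in matching the admissibility of pasts of $y$ (compatible with $y_1$) against those of the chosen extension $\tilde f$ (compatible with $f_1$), which is handled by closedness of $\Ga$ and a modulus of continuity argument at the cost of a constant slack in $\ep$.
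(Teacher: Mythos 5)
Your first inequality, $\overline{\mathrm{mdim}}_M(\Ga_\NN)\le\overline{\mathrm{mdim}}_M(\Ga_\ZZ)$, mirrors the paper's route: project $\Ga_\ZZ\to Z=\bigcap_k\si_\NN^k(\Ga_\NN)=\pi(\Ga_\ZZ)$ by the $1$-Lipschitz equivariant $\pi$, and then reduce from $\Ga_\NN$ to its eventual image $Z$. The paper packages the latter step as Corollary~\ref{aaa}, proved via the Katok variational principle; your ``standard compactness argument paralleling Bowen'' plausibly gives the same conclusion directly, since the $m(\ep)$ transient steps contribute $O(m(\ep))$ to $\log S(\Ga_\NN,d^\NN_n,\ep)$ and vanish after dividing by $n$ before $\ep\to 0$, but this needs to be written out rather than asserted.

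The real gap is in the reverse inequality. You propose to take an $(n,\ep)$-spanning subset $F\subset\pi(\Ga_\ZZ)$ and extend each $f\in F$ by ``all admissible past strings sampled at the precisions $\ep\ro^k$.'' Because your Definition of $S_2$ requires the spanning set to lie inside $\Ga_\ZZ$, these past extensions must themselves be $\Ga$-admissible and compatible with $f_1$. You acknowledge the admissibility-matching issue and claim it is ``handled by closedness of $\Ga$ and a modulus of continuity argument''---but closedness of $\Ga$ gives no such modulus. The fiber of admissible predecessors $\{z:(z,x)\in\Ga\}$ can jump discontinuously in $x$. Concretely, take $X=[0,1]$ and $\Ga=\big([0,1]\times\{0\}\big)\cup\big(\{0\}\times[0,1]\big)$. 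If $y\in\Ga_\ZZ$ has $y_1=0$ and $y_0=1$ (admissible), while the element $f\in F$ closest to $\pi(y)$ has $f_1=\ep/2\ne 0$, then the \emph{only} admissible predecessor of $f_1$ is $0$, so every admissible past extension $\tilde f$ of $f$ has $\tilde f_0=0$ and $d^\ZZ_n(y,\tilde f)\ge\ro^{-1}$, independently of $\ep$. Your constructed set thus fails to $(n,C\ep)$-span $\Ga_\ZZ$. No uniform constant slack in $\ep$ repairs this, because there is no continuity to exploit.

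The repair is to abandon spanning sets in favour of the cover count $S$ from \eqref{def:S}: a cover of $\Ga_\ZZ$ need not be built from admissible sequences. Given an open cover $\alpha$ of $Z$ with $d^\NN_n$-diameter $\le\ep$ and an $\ep$-cover $\cU=\{U_1,\dots,U_{M(\ep)}\}$ of $X$, the collection
$\beta=\{\cdots\times X\times U_{j_{-\ell}}\times\cdots\times U_{j_0}\times V:V\in\alpha,\ j_i\in\{1,\dots,M(\ep)\}\}$,
with $\ell=\ell(\ep)$ chosen so that $\ro^{-\ell}\diam X\le\ep$, covers $\Ga_\ZZ$ with $d^\ZZ_n$-diameter $\le\ep$ and cardinality $M(\ep)^{\ell+1}|\alpha|$, regardless of any admissibility. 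This yields $S(\Ga_\ZZ,d^\ZZ_n,\ep)\le M(\ep)^{\ell+1}S(Z,d^\NN_n,\ep)$, the multiplicative constant is $n$-independent and dies after dividing by $n$, and the theorem follows. This is exactly the cover estimate inside the paper's proof (there wrapped in the Katok entropy $N_\mu$), and it bypasses the obstacle you flagged.
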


A particular instance to which we can apply Theorem~\ref{n=z} is when we consider $\KK\,=\,\ZZ$ and $\Ga\,=\,\graph(T)$, where $T\colon X\to X$  is a continuous map. In this case, the subshift $$\Ga_\ZZ\,=\,\{(x_i)_{i\in\ZZ}\colon T(x_i)=x_{i+1}\}$$ is refered to as the \emph{inverse limit of $T$}. In the entropy setting, Bowen's formula is an immediate consequence of Friedland's~ \cite[Theorem 3.1]{Fried}, since $(X,T)$ and $(\Ga_\NN,\si_\NN)$ are topologically conjugate. Even though the metric mean dimension is not a topological invariant (though it is a bi-Lipschitz one), we show that it is the same for these two systems. The next result summarizes this information.

\begin{maintheorem}\label{corodef}
    Let $(X,d)$ be a compact metric space and $T\colon X\to X$ be a continuous map. Then\begin{equation}\label{invlimit}
\begin{split}
\overline{\mathrm{mdim}}_M(X,d,T)&\,=\,\overline{\mathrm{mdim}}_M(\Ga_\NN,d^\NN,\si_\NN)
\,=\,\overline{\mathrm{mdim}}_M(\Ga_\ZZ,d^\ZZ,\si_\ZZ) \\ \underline{\mathrm{mdim}}_M(X,d,T)&\,=\,\underline{\mathrm{mdim}}_M(\Ga_\NN,d^\NN,\si_\NN)
\,=\,\underline{\mathrm{mdim}}_M(\Ga_\ZZ,d^\ZZ,\si_\ZZ). \end{split}
\end{equation}
\end{maintheorem}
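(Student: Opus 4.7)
The plan is to reduce Theorem~\ref{corodef} to Theorem~\ref{n=z}. Since $T$ is continuous, the graph $\Ga\,=\,\graph(T)\subset X\times X$ is closed, so Theorem~\ref{n=z} immediately yields
$\overline{\mathrm{mdim}}_M(\Ga_\NN, d^\NN, \si_\NN) = \overline{\mathrm{mdim}}_M(\Ga_\ZZ, d^\ZZ, \si_\ZZ)$
together with its lower counterpart. It therefore suffices to prove that $\overline{\mathrm{mdim}}_M(X, d, T) = \overline{\mathrm{mdim}}_M(\Ga_\NN, d^\NN, \si_\NN)$, and I would approach this through the natural projection $\pi\colon \Ga_\NN \to X$ onto the initial coordinate.

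Since every admissible sequence in $\Ga_\NN$ has the form $(x_0, T x_0, T^2 x_0, \ldots)$, the map $\pi$ is a bijection intertwining $\si_\NN$ with $T$, and the continuity of $T$ together with the exponential decay of the weights $\ro^{-|i-1|}$ makes it a homeomorphism. Metric mean dimension is not a topological invariant, but $\pi$ is Lipschitz, with $d(\pi(x), \pi(y)) \le \ro \cdot d^\NN(x, y)$; this propagates to the Bowen metrics as $d_{T, n}(\pi x, \pi y) \le \ro \cdot d_{\si_\NN, n}(x, y)$, and a routine comparison of $(n, \epsilon)$-spanning sets gives $\overline{\mathrm{mdim}}_M(X, d, T) \le \overline{\mathrm{mdim}}_M(\Ga_\NN, d^\NN, \si_\NN)$ together with the analogous lower bound.

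The reverse inequality is the main obstacle, because $\pi^{-1}$ is typically not Lipschitz: the expression
\[
d^\NN(\pi^{-1}(u), \pi^{-1}(v)) \,=\, \sup_{i} \frac{d(T^i u, T^i v)}{\ro^{|i-1|}}
\]
involves all forward iterates, whose dilation can easily outpace $\ro^{i}$. To bypass this, given $\epsilon > 0$, I would fix $J = J(\epsilon)$ with $\diam(X)/\ro^{J-1} < \epsilon$ — so that $J$ grows only logarithmically in $1/\epsilon$ — after which the tail of the above supremum at indices far from the origin falls below $\epsilon$ automatically. For every $n$, a minimal Bowen $(n+J, \epsilon/2)$-spanning set $E \subset X$ for $T$ then lifts via $\pi^{-1}$ to an $(n, \epsilon)$-spanning set for $(\Ga_\NN, \si_\NN)$: the $\epsilon/2$-matching of $T$-orbits of length $n+J$ controls, simultaneously for each iterate $\si_\NN^k \pi^{-1}(y)$ with $k \in [0, n)$, every weighted term in $d^\NN(\si_\NN^k \pi^{-1}(y), \si_\NN^k z)$ up to the window size $J$, while the remaining tail is absorbed by the choice of $J$. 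This yields a bound of the shape $\mathrm{Span}(\si_\NN, n, \epsilon) \le \mathrm{Span}(T, n+J(\epsilon), \epsilon/2)$.

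Since $J(\epsilon)$ is independent of $n$, dividing by $n$ and letting $n \to \infty$ gives $h(\si_\NN, \epsilon) \le h(T, \epsilon/2)$; dividing by $|\log\epsilon|$ and passing to $\limsup_{\epsilon \to 0}$ then produces the reverse inequality, using $|\log(\epsilon/2)|/|\log\epsilon| \to 1$, and the identical argument with $\liminf$ handles the lower metric mean dimension. The crux is thus a quantitative matching between scales: reconstructing a shifted orbit in $\Ga_\NN$ from a single $T$-orbit on $X$ costs only a logarithmic — hence sublinear in $n$ — extension of the Bowen window, which leaves the exponential growth rates at any fixed scale untouched.
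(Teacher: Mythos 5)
Your argument is correct, and it takes a genuinely more direct route than the paper. The paper deduces Theorem~\ref{corodef} as the single-generator specialization ($G_1=\{T\}$, $p=1$) of a variational principle (Theorem~\ref{sg}) that compares Friedland's metric mean dimension of a free semigroup action with the Carvalho--Rodrigues--Varandas notion over random walks; Theorem~\ref{n=z} then handles the $\NN$ versus $\ZZ$ comparison, as in your step~1. Your proof, by contrast, works only with the single map: the projection $\pi\colon\Ga_\NN\to X$ is a Lipschitz conjugacy giving $\overline{\mathrm{mdim}}_M(X,d,T)\leq\overline{\mathrm{mdim}}_M(\Ga_\NN,d^\NN,\si_\NN)$ via Proposition~\ref{conj}(a), and for the reverse direction you lift a Bowen $(n+J(\ep),\ep/2)$-spanning set for $T$ to an $(n,\ep)$-spanning set for $\si_\NN$, where $J(\ep)\asymp\log_\ro(1/\ep)$ absorbs the geometric tail of the product metric. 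The same window-length device is buried inside the paper's proof of Theorem~\ref{sg} (there called $\ell(\ep)$, combined with a Pigeonhole step over the $p^{n+\ell}$ possible transition tuples), so the core quantitative idea coincides; what your approach buys is self-containment and elementarity, avoiding the semigroup-action framework, random walks, homogeneous measures, and the Katok-entropy variational principle (Theorem~\ref{pvkatok}) that the paper's route invokes. What the paper's route buys is the much more general Theorem~\ref{sg} itself. One small point worth making explicit in a final write-up: after the estimate $S_2(\Ga_\NN,d^\NN_{\si,n},\ep)\leq S_2(X,d_{T,\,n+J(\ep)},\ep/2)$, dividing by $n$ and letting $n\to\infty$ uses only that $J(\ep)$ is independent of $n$, and then dividing by $\log(1/\ep)$ and taking $\limsup$ or $\liminf$ uses $\log(2/\ep)/\log(1/\ep)\to1$, exactly as you noted; both the upper and lower metric mean dimensions come out for free since the same inequality holds at every scale.
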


Some comments are in order. Clearly, Theorem~\ref{corodef} hints that it is worthwhile investigating the metric mean dimension of subshifts on compact alphabets, a topic that has recently received much attention (see \cite{GS2}, \cite{GT},  \cite{Tsu2}), though far less known than the finite alphabet case. We remark that Theorems~\ref{n=z} and \ref{corodef} were first proved in the author's Master Thesis \cite{mestrado}. After completing this work, we became aware that a version of Theorem~\ref{corodef} for surjective maps was recently published in \cite[Lemma 3.8]{BS}.

Under appropriate adjustments, Theorem~\ref{corodef} allows us to extend properties valid for invertible maps to non-invertible ones. Let us illustrate this feature. Given a continuous map $T$ acting on a compact metric space $(X,d)$, denote by $\cP_T(X)$ the set of $T-$invariant Borel probability measures endowed with the weak$^*$ topology, and by $\cE_T(X)$ its subset of ergodic elements. Consider the \textit{local metric mean dimension function} introduced in \cite{CPV}, defined as
\begin{eqnarray}\label{def:D}
\mathcal{D} \colon X \,\,&\to&\,\,\RR \nonumber \\
x &\mapsto& \inf \,\Big\{\overline{\mathrm{mdim}}_M(\overline{U},d,T)\colon \,U \textrm{ is an open neighbourhood of }x \Big\}.
\end{eqnarray}
The map $\mathcal{D}$ is upper semi-continuous and constant $\mu-$almost everywhere with respect to any ergodic probability measure (cf. \cite[Lemmas 9.1 and 9.2]{CPV}) and has been connected to different notions of measure-theoretic metric mean dimension (cf. \cite[Corollary 9.6 and Example 10.5]{CPV}). The next result was established in \cite[Theorem E]{CPV} under the assumption that the map $T$ is a homeomorphism. Using Theorem~\ref{corodef}, we may drop this condition.

\begin{corollary}\label{localmdim}
Let $(X,d)$ be a compact metric space and $T\colon X \to X$ be a continuous map such that $\overline{\mathrm{mdim}}_M(X,d,T) < +\infty$. Then
\begin{equation}\label{eqlocalmdim}
    \overline{\mathrm{mdim}}_M(X,d,T) \,=\, \max_{\mu\,\in\,\cP_T(X)} \,\int \mathcal{D}(x) \,d\mu(x) \,=\, \max_{\mu\,\in\,\cE_T(X)} \,\int \mathcal{D}(x) \,d\mu(x).
\end{equation}
In addition, a measure $\mu \in \cP_T(X)$ attains the previous maximum if and only if
$$\mathcal{D}|_{\supp(\mu)} \,\equiv \, \overline{\mathrm{mdim}}_M(X,d,T).$$
\end{corollary}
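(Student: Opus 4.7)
The plan is to reduce the statement to the homeomorphism case by passing to the inverse limit, and then invoke \cite[Theorem E]{CPV} directly. Put $\widehat{X}=\Ga_\ZZ$ for $\Ga=\graph(T)$, with the shift homeomorphism $\hat T=\si_\ZZ$ and the canonical projection $\pi\colon\widehat{X}\to X$, $(x_i)_{i\in\ZZ}\mapsto x_0$, which satisfies $\pi\circ\hat T=T\circ\pi$. Theorem~\ref{corodef} gives $\overline{\mathrm{mdim}}_M(X,d,T)=\overline{\mathrm{mdim}}_M(\widehat X,d^\ZZ,\hat T)$, so in particular the latter is finite; since $\hat T$ is a homeomorphism, \cite[Theorem E]{CPV} applies and furnishes
$$\overline{\mathrm{mdim}}_M(\widehat X,d^\ZZ,\hat T)=\max_{\nu\in\cP_{\hat T}(\widehat X)}\int \widehat{\cD}\,d\nu=\max_{\nu\in\cE_{\hat T}(\widehat X)}\int \widehat{\cD}\,d\nu,$$
together with the characterization that $\nu$ is a maximizer iff $\widehat{\cD}\equiv\overline{\mathrm{mdim}}_M(\widehat X,d^\ZZ,\hat T)$ on $\supp\nu$; here $\widehat{\cD}$ denotes the local metric mean dimension function of $(\widehat X,d^\ZZ,\hat T)$.

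The second step is to transfer this information along $\pi$. The pushforward $\nu\mapsto \pi_*\nu$ is a well-known bijection $\cP_{\hat T}(\widehat X)\to \cP_T(X)$ (the inverse being the standard natural-extension lift) preserving ergodicity and supports, i.e.\ $\supp(\pi_*\nu)=\pi(\supp\nu)$. Combined with the change-of-variables formula, both the integral identity and the support characterization in \eqref{eqlocalmdim} will follow from \cite[Theorem E]{CPV} once we establish the pointwise identity
$$\widehat{\cD}(\hat x)\,=\,\cD\bigl(\pi(\hat x)\bigr)\qquad\text{for every }\hat x\in\widehat X$$
(in fact, an inequality on a full-measure set with respect to every $T$-invariant measure would suffice).

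The main obstacle is proving this pointwise identification. The inequality $\widehat{\cD}(\hat x)\le \cD(\pi(\hat x))$ is the substantive direction: for an open neighborhood $U$ of $\pi(\hat x)$ in $X$, the set $\pi^{-1}(U)$ is an open neighborhood of $\hat x$ with $\overline{\pi^{-1}(U)}\subseteq \pi^{-1}(\overline U)$, and one needs the localized estimate
$$\overline{\mathrm{mdim}}_M\bigl(\pi^{-1}(\overline U),d^\ZZ,\hat T\bigr)\;\le\;\overline{\mathrm{mdim}}_M(\overline U,d,T).$$
I would obtain this by revisiting the proof of Theorem~\ref{corodef} and observing that the Bowen-type argument used to bound $(n,\ep)$-separated sets in $\widehat X$ by $(n,\ep')$-separated sets in $X$ is purely \emph{local in the zeroth coordinate}: it never uses forward invariance of $X$ under $T$, only compactness and the fact that the past coordinates are $\hat T$-backward determined by the present, so it applies verbatim to any closed $A\subseteq X$ in place of $X$. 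The reverse inequality $\widehat{\cD}(\hat x)\ge \cD(\pi(\hat x))$ is the easier direction: any $(n,\ep)$-separated set in a closed neighborhood $\overline U$ of $\pi(\hat x)$ lifts coordinate-wise through $\pi$ to an $(n,\ep/2)$-separated set inside $\pi^{-1}(\overline U)$ (using the normalization $d^\ZZ\ge d(\cdot_0,\cdot_0)/2$), and since the cylindrical neighborhoods $\pi^{-1}(U)$ are cofinal among neighborhoods of $\hat x$ for the purposes of computing $\widehat{\cD}$, this yields the bound.

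With these two directions in hand, integration yields $\int\cD\,d(\pi_*\nu)=\int\widehat\cD\,d\nu$; combining with Step~1 and the support identity $\supp(\pi_*\nu)=\pi(\supp\nu)$ completes both conclusions of Corollary~\ref{localmdim}.
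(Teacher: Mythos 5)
Your overall strategy---pass to the inverse limit $\widehat X = \Ga_\ZZ$, apply \cite[Theorem E]{CPV} to the shift homeomorphism, and transfer via $\pi$---is the same as the paper's. The standard facts you quote (push-forward bijection, preservation of ergodicity and supports) are all correct and routinely used in the paper's proof. The difference is purely in how you try to transfer the local function $\mathcal D$, and it is here that the proposal has a real gap.

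The substantive step, as you rightly identify, is the inequality $\widehat{\cD}(\hat x)\le \cD(\pi(\hat x))$, or equivalently the localized estimate $\overline{\mathrm{mdim}}_M(\pi^{-1}(\overline U),d^\ZZ,\hat T)\le\overline{\mathrm{mdim}}_M(\overline U,d,T)$. You claim this follows by ``revisiting the proof of Theorem~\ref{corodef}''. However, that proof is not a direct Bowen-type counting argument: it goes through Theorem~\ref{n=z}, which is proved via the Katok-entropy variational principle (Theorem~\ref{pvkatok}) applied to ergodic measures, and that machinery does not localize to arbitrary closed subsets $\overline U$ that are not invariant. Moreover your stated reason---``the past coordinates are $\hat T$-backward determined by the present''---is false; for non-invertible $T$, the past is precisely what is \emph{not} determined by $x_0$, and this is exactly why one needs a pigeonhole argument grouping sequences with nearby past coordinates (the paper does this in the proof of the corollary itself, bounding the number of distinct pasts up to depth $\ell(\ep)$ by $S(X,d,\ep)^{\ell+1}$). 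As written, the key estimate is asserted without an argument, and the justification you sketch would not supply one.

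Two further remarks. First, the claimed reverse inequality $\widehat{\cD}(\hat x)\ge \cD(\pi(\hat x))$ is dubious: the cylinders $\pi^{-1}(V)$ only constrain the zeroth (and hence all future) coordinates, never the past ones, so they are \emph{not} cofinal among open neighborhoods of $\hat x$ in $\Ga_\ZZ$, and a basic open set restricting $x_{-1}$ need not contain any $\pi^{-1}(V)$. Fortunately, that direction is not needed: once you have the one inequality $\widehat{\cD}\le\cD\circ\pi$, it follows that $\overline{\mathrm{mdim}}_M(X,d,T)=\max_\nu\int\widehat\cD\,d\nu\le\max_\mu\int\cD\,d\mu$, and the opposite inequality $\max_\mu\int\cD\,d\mu\le\overline{\mathrm{mdim}}_M(X,d,T)$ is trivial from $\cD\le\overline{\mathrm{mdim}}_M(X,d,T)$. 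Second, the support characterization then follows directly from the integral identity together with $\cD\le\overline{\mathrm{mdim}}_M(X,d,T)$ and upper semicontinuity of $\cD$, without needing any pointwise identity $\widehat\cD=\cD\circ\pi$. So the proposal's architecture can be repaired, but the central estimate must be proved by the pigeonhole/covering argument, not read off from the proof of Theorem~\ref{corodef}.
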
 

An element $x\in X$ is said to be a \textit{metric mean dimension point} if $\mathcal{D}(x)>0$; it is a \textit{full metric mean dimension point} if $\mathcal{D}(x)=\overline{\mathrm{mdim}}_M(X,d,T).$ Denote the set of such points by $D_p(X,d,T)$ and $D_p^{full}(X,d,T)$, respectively. The following is an immediate consequence of Corollary~\ref{localmdim}, which, in particular, guarantees the existence of full metric mean dimension points.

\begin{corollary}
Let $(X,d)$ be a compact metric space and $T\colon X\to X$ be a continuous map. Then there exists an ergodic probability measure $\mu \in \cE_T(X)$ such that $\supp(\mu) \subseteq D_p^{full}(X,d,T).$
\end{corollary}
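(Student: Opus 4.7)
The plan is to derive the corollary directly from Corollary~\ref{localmdim}; the statement is essentially a readout of the characterization of maximizers contained there. Working under the standing hypothesis $\overline{\mathrm{mdim}}_M(X,d,T) < +\infty$ (inherited from Corollary~\ref{localmdim}), the first equality in \eqref{eqlocalmdim} tells us that the supremum
$$\sup_{\mu\,\in\,\cE_T(X)}\,\int \mathcal{D}(x)\,d\mu(x)$$
is actually attained. Thus, we may pick an ergodic $\mu^{*}\in\cE_T(X)$ with $\int \mathcal{D}\,d\mu^{*} = \overline{\mathrm{mdim}}_M(X,d,T)$.

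Next, I would invoke the final clause of Corollary~\ref{localmdim}: a measure attains the maximum if and only if $\mathcal{D}$ equals $\overline{\mathrm{mdim}}_M(X,d,T)$ identically on its support. Applied to $\mu^{*}$ this gives $\mathcal{D}(x) = \overline{\mathrm{mdim}}_M(X,d,T)$ for every $x\in\supp(\mu^{*})$, which by the very definition of $D_p^{full}(X,d,T)$ is exactly the inclusion $\supp(\mu^{*})\subseteq D_p^{full}(X,d,T)$, as claimed.

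There is no genuine obstacle here: the substantive work has already been performed in Corollary~\ref{localmdim}, whose proof in turn relies on Theorem~\ref{corodef} to reduce the non-invertible case to the homeomorphism setting treated in \cite[Theorem E]{CPV}. What is noteworthy about the present corollary is only that the rigidity of maximizers (the "only if" direction) upgrades the \emph{integral} equality $\int \mathcal{D}\,d\mu^{*} = \overline{\mathrm{mdim}}_M(X,d,T)$, which a priori permits $\mathcal{D}$ to fluctuate, into the \emph{pointwise} equality on the entire support of $\mu^{*}$; this is what produces full metric mean dimension points and guarantees they form a non-empty, in fact ergodic-measure-supporting, set.
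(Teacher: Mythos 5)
Your proposal is correct and is exactly the paper's intended route: the paper does not give a separate proof, calling the statement an immediate consequence of Corollary~\ref{localmdim}, and you spell out precisely that reduction (existence of an ergodic maximizer from the second equality in \eqref{eqlocalmdim}, then the rigidity clause forcing $\mathcal{D}\equiv \overline{\mathrm{mdim}}_M(X,d,T)$ on the support). Your explicit flag that the finiteness hypothesis $\overline{\mathrm{mdim}}_M(X,d,T)<+\infty$ must be carried over from Corollary~\ref{localmdim}, even though the corollary's wording omits it, is a fair and careful observation about the paper's statement rather than a gap in your argument.
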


\begin{example}\label{defcaixinhas}
 Let $f\colon [0,1] \to [0,1]$ be given by $f(x) = |1-|3x-1||$ and $(a_k)_{k \, \in \, \mathbb N \,\cup\, \{0\}}$ be the sequence whose general term is $a_0 = 0$ and $a_k \,=\, \sum_{j=1}^k \, \frac{6}{\pi j^2}.$ For each $k \in \mathbb N$, take the interval $J_k \,=\, [a_{k-1}, a_k]$ and let $T_k\colon J_k \to [0,1]$ be the unique increasing affine map from $J_k$ onto $[0,1]$. Let
$$
\begin{array}{rccc}
T \colon & [0,1] & \rightarrow & [0,1] \\
& x \in J_k & \mapsto & T_k^{-1}\circ f^{k-1} \circ T_k\\
& x=1 & \mapsto & 1
\end{array}.
$$ (see Figure~\ref{caixinhas}). It was shown in \cite[Proposition 8]{VV} that $\mathrm{mdim}_M([0,1],d,T)\,=\,1$ and in \cite[Example 10.3]{CPV} that $$\cD(x)\,=\left\{ \begin{array}{cc}
   1  & \text{if $x=1$} \\
     0 & \text{otherwise}
    \end{array} .
   \right.$$ In particular, the unique invariant probability measure maximizing \eqref{eqlocalmdim} is the Dirac measure supported at $1$ and $D_p(X,d,T)\,=\,D_p^{full}(X,d,T)\,=\,\{1\}.$

    \begin{figure}[!htb]
\begin{center}
\includegraphics[scale=0.3]{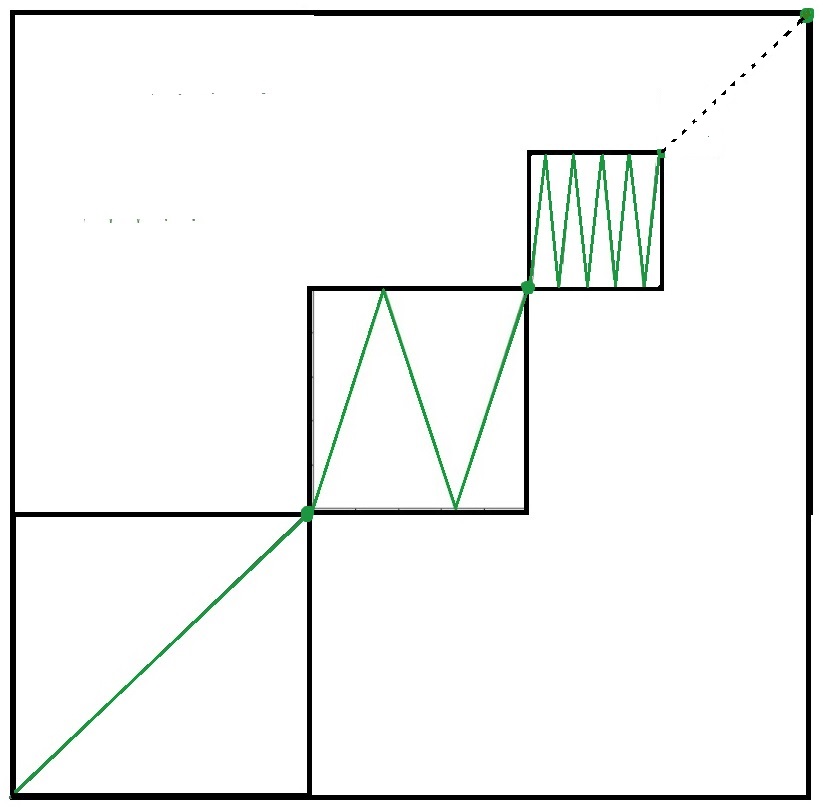}
\caption{Graph of the map $T$.}
\label{caixinhas}
\end{center}
\end{figure}

\end{example}

\begin{example}
    Let $\si_\NN\colon [0,1]^\NN\to[0,1]^\NN$ be the shift map. It was shown in \cite{CPV} that $\cD\equiv1$, so $D_p(X,d,T)\,=\,D_p^{full}(X,d,T)\,=\,[0,1]^\NN.$ In particular, every invariant probability measure maximizes \eqref{eqlocalmdim}.
\end{example}

\subsection{Discontinuous maps}
Another consequence of Theorem~\ref{corodef} is that, since it expresses the metric mean dimension of a map in terms of its induced subshift, we may regard one of the equalities in \eqref{invlimit} as 
a definition of the metric mean dimension of an arbitrary (not necessarily continuous) map. The main difference from the continuous setting is that, for a discontinuous map $T\colon X\to X$, the transition set $\Ga=\graph(T)$ is no longer closed and therefore the sequence space induced by it is not a subshift. However, since the metric mean dimension does not distinguish a set from its closure (see Remark~\ref{mdimclosure}), it is natural to consider the subshift of compact type $\overline{\Ga_\NN}$, which always satisfies $$\overline{\mathrm{mdim}}_M(\overline{\Ga_\NN},d^\NN,\si_\NN)\,=\,\overline{\mathrm{mdim}}_M(\Ga_\NN,d^\NN,\si_\NN)
\quad\text{and}\quad \underline{\mathrm{mdim}}_M(\overline{\Ga_\NN},d^\NN,\si_\NN)\,=\,\underline{\mathrm{mdim}}_M(\Ga_\NN,d^\NN,\si_\NN).$$

\begin{definition}\label{discontdef}
   \emph{ Let $(X,d)$ be a compact metric space, $T\colon X\to X$ be a (not necessarily continuous) map and $\Ga\,=\,\graph(T)$. The upper/lower metric mean dimension of $(X,d,T)$ are given by
   \begin{equation*}
\begin{split}
\overline{\mathrm{mdim}}_M(X,d,T)&
\,=\,\overline{\mathrm{mdim}}_M(\overline{\Ga_\NN},d^\NN,\si_\NN)
\\ \underline{\mathrm{mdim}}_M(X,d,T)&
\,=\,\underline{\mathrm{mdim}}_M(\overline{\Ga_\NN},d^\NN,\si_\NN)
\end{split}
\end{equation*} }
\end{definition}

Let us see an interesting use of this definition. It is known (cf. \cite[Proposition~15.2.13]{KH}) that if $T\colon[0,1]\to[0,1]$ is a piecewise monotone map with finitely many, say $c$, full branches, then \begin{equation}\label{entropy}
    h_{top}(T)\,=\,\log c.
\end{equation} Our next result establishes an analogue formula for the metric mean dimension of piecewise monotone maps with infinitely many full branches. \color{black} In this context, the role of counting branches (which represents the number of critical points of the map) is played by the computation of the box dimension of the (now infinite) set of critical points.

Let $I\subset\RR$ be a compact interval and $A\subset I$ be a compact set defined by  $A=I\setminus \cup_{k\geq1} J_k$, where $(J_k)_k$ are pairwise disjoint open intervals ordered non-increasingly in length such that $|I|=\sum_{k\geq1}|J_k|$. We note that the latter equality implies that $A$ has zero Lebesgue measure; and every compact subset of $\RR$ with zero Lebesgue measure has the previous structure. The intervals $(J_k)_k$ are referred to as the \emph{cut-out sets of $A$} and are deeply related to the box dimension of $A$ (cf. \cite[Propositions 3.6 and 3.7]{Fa2}), which can be any value in $[0,1]$.

Consider a map $T_A\colon I\to I$ satisfying: 
\begin{itemize}
    \item[(C1)] $T_A|_{J_k}$ is strictly monotone, for every $k\geq1$.
    \item[(C2)]  $T_A(J_k)\,=\,\overset{\,\circ}{I}$, for every $k\geq1$.
    \medskip
    \item[(C3)] $T_A|_{J_k}$ is differentiable and $|T'_A|_{J_k}|\,\geq\,\eta\,>\,0$, for some $\eta>0$ and every $k\geq1.$
\end{itemize}
The next result relates the metric mean dimension of $T_A$ with the box dimension of $A$, both with respect to the Euclidean distance $d$. Regarding condition (C3), we refer the reader to Subsection~\ref{C3section}.

\begin{maintheorem}\label{gausslikethm}
    Let $A\subset I$ be a compact subset with zero Lebesgue measure and $T_A\colon I\to I$ be any map satisfying conditions \emph{(C1)-(C3)}. Then \begin{eqnarray*}    \overline{\mathrm{mdim}}_M(I,d,T_A)&=& \overline{\dim}_B(A,d)\\ \underline{\mathrm{mdim}}_M(I,d,T_A)&\leq& \underline{\dim}_B(A,d).
    \end{eqnarray*}
     Moreover, if $\dim_B(A,d)$ exists then \begin{equation}\label{eqgausslike}
     \mathrm{mdim}_M(I,d,T_A)\,=\,\dim_B(A,d).
     \end{equation}
\end{maintheorem}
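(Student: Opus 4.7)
The plan is to invoke Definition~\ref{discontdef} to identify $\mathrm{mdim}_M(I,d,T_A)$ with the metric mean dimension of the subshift of compact type $(\overline{\Gamma_\NN}, d^\NN, \sigma_\NN)$ where $\Gamma := \graph(T_A)$, and then to sandwich the entropy at scale $\epsilon$ by quantities governed by the \emph{large-branch count} $K_\epsilon := \#\{k : |J_k| \geq \epsilon\}$. Via Falconer's cut-out characterization \cite[Propositions~3.6-3.7]{Fa2}, $\log K_\epsilon / |\log\epsilon|$ controls both box dimensions of $A$: its $\limsup$ yields $\overline{\dim}_B(A)$ and, when $\dim_B(A)$ exists, its limit equals $\dim_B(A)$.

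For the upper bound I would observe that every small branch $|J_k| < \epsilon$ has both endpoints in $A$ (since $A = I \setminus \bigsqcup_k J_k$ is closed), so the residual set $R_\epsilon := I \setminus \bigsqcup_{|J_k| \geq \epsilon} J_k$ lies in the $\epsilon$-neighborhood of $A$, whence $N(R_\epsilon, \epsilon) \lesssim N(A, \epsilon)$. The partition $\mathcal{P}_\epsilon$ of $I$ consisting of the large branches $\{J_k : |J_k| \geq \epsilon\}$ together with an $\epsilon$-cover of $R_\epsilon$ therefore has cardinality $|\mathcal{P}_\epsilon| \lesssim N(A, \epsilon)$. Each length-$n$ $\mathcal{P}_\epsilon$-itinerary lifts to a cylinder in $\overline{\Gamma_\NN}$ of $(d^\NN)_n$-diameter $\leq \epsilon$, producing an $(n,\epsilon)$-spanning family of size $\leq |\mathcal{P}_\epsilon|^n$. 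Hence $h(\sigma_\NN, \epsilon) \leq \log N(A, \epsilon) + O(1)$, and dividing by $|\log \epsilon|$ and passing to $\limsup$ and $\liminf$ yields $\overline{\mathrm{mdim}}_M \leq \overline{\dim}_B(A)$ and $\underline{\mathrm{mdim}}_M \leq \underline{\dim}_B(A)$.

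For the matching lower bound on $\overline{\mathrm{mdim}}_M$, I would fix $K := K_{2\epsilon}$ branches of length $\geq 2\epsilon$, whose centers are pairwise at distance $\geq 2\epsilon$, and for each length-$n$ itinerary in these branches use (C2) together with the bijectivity of $T_A|_{J_k}:J_k \to \overset{\,\circ}{I}$ granted by (C1)-(C3) to produce, by backward preimage construction, an orbit whose iterates follow the prescribed itinerary and hit prescribed reference points at appropriately chosen time steps. Demonstrating that these $K^n$ orbits are pairwise $(n, \epsilon)$-Bowen-separated then yields $h(\sigma_\NN, \epsilon) \geq \log K_{2\epsilon}$, whence $\overline{\mathrm{mdim}}_M \geq \overline{\dim}_B(A)$. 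The moreover clause follows because, when $\dim_B(A)$ exists, $\log K_\epsilon/|\log \epsilon|$ converges to $\dim_B(A)$, so the pointwise bound upgrades to $\underline{\mathrm{mdim}}_M \geq \dim_B(A)$, which combined with the upper bound gives $\mathrm{mdim}_M(I,d,T_A) = \dim_B(A)$.

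The main obstacle is the lower bound: verifying that the $K_{2\epsilon}^n$ orbits indexed by length-$n$ itineraries of large branches are genuinely pairwise $(n, \epsilon)$-Bowen-separated. Two orbits visiting distinct large branches at a single time step need not be $\epsilon$-apart, because adjacent branches may share an endpoint; a naive cylinder-enumeration only produces a few separated orbits at the terminal step. Resolving this requires either carefully pinning each orbit to an interior reference point (the branch center) at suitable time steps via the backward bijection of $T_A|_{J_k}$, or invoking the jump of $T_A$ at shared branch boundaries—the one-sided limits from the two sides approach opposite endpoints of $I$—to amplify a small separation at one step into a distance of order $|I|$ at the next. Assembling these local gains into the desired exponential count $K_{2\epsilon}^n$ of separated orbits constitutes the technical crux of the argument.
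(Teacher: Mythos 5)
Your high-level framework matches the paper's: reduce to the subshift of compact type via Definition~\ref{discontdef}, and sandwich the $\ep$-entropy between branch-count quantities controlled by Falconer's cut-out characterization. However, there are genuine gaps in both directions, and the paper's execution is different in both.

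\emph{Lower bound.} You correctly flag the separation issue as the crux, but you do not resolve it, and the two fixes you sketch are problematic. Pinning to branch centers cannot be done at every intermediate time step (the pull-back of a center under $T_A|_{J_k}$ is not a center), and the ``jump at shared boundaries'' is orientation-dependent: if consecutive branches have opposite monotonicity, the one-sided limits of $T_A$ at the shared endpoint can agree, so no amplification occurs. The paper sidesteps this cleanly with an \emph{alternating-branch} trick: among the $2k$ longest branches, reorder them by position and keep only every other one, $J_{i_1}, J_{i_3},\dots,J_{i_{2k-1}}$. Each pair of kept branches is separated by a discarded branch of length $\geq \ep_{2k}$, so distinct itineraries in the $k$ kept branches are automatically $(n,\ep_{2k})$-separated at the first time they differ; condition (C2) makes every such itinerary realizable. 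This yields $S_1(\Ga_\NN,d_n^\NN,\ep_{2k})\ge k^n$ directly, with no pinning needed.

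\emph{Upper bound.} Here there is an unacknowledged error: your claim that each length-$n$ $\cP_\ep$-itinerary lifts to a cylinder of $(d^\NN)_n$-diameter $\le \ep$ is false. The large branches in $\cP_\ep$ have diameter $\ge\ep$ by construction, and two orbits sharing a $\cP_\ep$-itinerary may sit far apart inside such a branch at every time step; since (C3) only requires $|T_A'|\ge\eta$ with $\eta$ possibly $<1$, there is no expansion to shrink these fibers. Hence the resulting family is neither $(n,\ep)$-spanning nor a small-diameter cover, and the bound $h(\si_\NN,\ep)\lesssim\log N(A,\ep)$ does not follow. The paper instead works directly at the grid scale $\ep$: it uses Lemma~\ref{specrad} to bound $h_\ep$ by $\log r(\Ga^\ep)$ for the $0$--$1$ matrix on the $\ep$-grid, then uses Gershgorin's theorem (inequality \eqref{eqcircle}) to bound $r(\Ga^\ep)$ by the maximal column sum. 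Condition (C3) guarantees that each branch contributes at most $M+2$ boxes to any column, so the column sum is $O\big(S(A,d,\ep)\big)$ via Lemma~\ref{inbetween}. Without a mechanism analogous to this column-count argument, your upper bound does not close.

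So while your reduction to the subshift and your use of Falconer's cut-out estimates are sound and identical in spirit to the paper's, both halves of the sandwich still need to be repaired.
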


\smallskip

\begin{example}
    Consider the \emph{Gauss map} 
    \begin{equation*}
    \begin{array}{rccc}
G \colon & [0,1] & \rightarrow & [0,1] \\
& 0  & \mapsto & 0\\
& x\ne0 & \mapsto & \frac{1}{x}\,\text{(mod 1)}
\end{array}
\end{equation*} to which we apply Theorem~\ref{gausslikethm} using $A=\{1/n\,:\,n\in\NN\}\,\cup\,\{0\}$ and $\eta=1.$ Thus, $$\mathrm{mdim}_M([0,1],d,G)\,=\, \dim_B(A,d) \,=\,1/2.$$
\end{example}

\medskip

\begin{example}
    We may also apply Theorem~\ref{gausslikethm} to Young induced transformations. For instance, let $f_\alpha\colon[0,1]\to[0,1]$ be the  Manneville-Pomeau map with parameter $\alpha>0$, defined as  
$$
f_\alpha(x) \,= \left\{ \begin{array}{cc}
   x\,+\,2^\alpha\,x^{\alpha+1}  & \text{if $\,\,0\leq x\leq \frac{1}{2}$} \\
     2x\,-\,1 & \text{if $\,\,\frac{1}{2}< x\leq 1$}
    \end{array} 
   \right..
$$ 
In this case, the Young tower induces a transformation $T_\alpha\colon[\frac{1}{2},1]\to[\frac{1}{2},1]$ (cf. \cite[\S 3.5]{Alves}) which satisfies conditions (C1)-(C3) with $\eta=1$ and $A=\{z_n\,:\,n\in\NN\}\cup\{0\}$, where $$ z_n\,\in\,\Big[ \frac{1}{(n+n_0+1)^{1/\alpha}}\,,\,\frac{1}{(n+n_0)^{1/\alpha}} \Big]$$ for some $n_0\in\NN$ and every $n\in\NN.$ Therefore, $$ 
\mathrm{mdim}_M\big(\big[1/2\,,\,1\big],d,T_\alpha\big)\,=\,\dim_B(A,d) \,=\, \frac{\alpha}{1+\alpha}. $$
\end{example}

Regarding the two previous examples, it was proved in \cite{PW} that they belong in the class of EMR maps (cf \cite[Definition 2.3]{IV}). It is also known that the pressure function $P$ associated to the parameterized geometric potential possesses a unique transition point $$s_\infty\,=\,\sup\{t\geq0\,:\,P(t)=+\infty\}\,=\,\inf\{t\geq0\,:\,P(t)<+\infty\}.$$ This value was computed explicitly in \cite[Theorem 2.11]{IV}, where it was shown to be equal to the box dimension of the corresponding set of discontinuities. By Theorem~\ref{gausslikethm} we further conclude that, for these maps, $s_\infty$ is precisely the metric mean dimension with respect to the Euclidean distance. More precisely:

\smallskip

\begin{corollary}\label{criticalpoint}
    Assume that $\dim_B(A,d)$ exists and that $T_A$ satisfies the additional condition \begin{itemize}
         \item[\emph{(C4)}] $\sup_{k}\,\sup_{x,y\in J_k} \Big| \frac{T'(x)}{T'(y)}\Big|\,<\,+\infty.$
    \end{itemize} Then $$ \mathrm{mdim}_M(I,d,T_A)\,=\,\dim_B(A,d)\,=\, s_\infty.$$
\end{corollary}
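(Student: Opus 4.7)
The plan is to assemble the statement from two essentially disjoint ingredients: Theorem~\ref{gausslikethm}, which handles the first equality, and the explicit computation of the transition parameter $s_\infty$ for EMR maps carried out in \cite[Theorem~2.11]{IV}, which handles the second. Since $\dim_B(A,d)$ is assumed to exist, Theorem~\ref{gausslikethm} already gives
\[
\mathrm{mdim}_M(I,d,T_A)\,=\,\dim_B(A,d)
\]
with no further work, so the proof reduces to showing that $\dim_B(A,d)=s_\infty$ for the geometric potential associated to $T_A$.

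For that remaining equality I would verify that, under hypotheses (C1)--(C4), the map $T_A$ belongs to the EMR class in the sense of \cite[Definition~2.3]{IV}. The natural countable Markov partition indexed by the cut-out intervals $(J_k)_k$, together with the full-branch condition, is provided by (C1)--(C2); the uniform expansion bound on each branch is exactly (C3); and the Renyi-type bounded distortion condition is precisely (C4). The set of discontinuity points of $T_A$ is, by construction, the complement $A=I\setminus\bigcup_{k\geq 1}J_k$. With this identification in place, \cite[Theorem~2.11]{IV} applies and yields $s_\infty=\dim_B(A,d)$, from which the corollary follows by concatenation with the equality coming from Theorem~\ref{gausslikethm}.

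The main obstacle is not analytic but notational: one must ensure that the setup of \cite{IV} matches ours not only qualitatively but term by term, so that their formula for $s_\infty$ may be invoked as a black box. In particular, one has to check that the partition $\{J_k\}_{k\geq 1}$ constitutes a Markov partition in IV's sense, that their expansion and distortion constants can be read off from $\eta$ in (C3) and from the finite supremum in (C4), and that the set whose box dimension appears in their statement really is $A$ rather than a proper subset thereof (for instance, the set of accumulation points of the critical orbits). Once this dictionary has been set up, no additional estimates are needed and the corollary follows immediately.
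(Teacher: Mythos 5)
Your proposal is correct and follows essentially the same route the paper takes: the corollary is stated without a separate proof and is meant to be read as the concatenation of Theorem~\ref{gausslikethm} (giving $\mathrm{mdim}_M(I,d,T_A)=\dim_B(A,d)$ once $\dim_B(A,d)$ exists) with \cite[Theorem~2.11]{IV} (identifying $\dim_B(A,d)$ with the transition point $s_\infty$ once $T_A$ is checked to be an EMR map, with (C1)--(C2) supplying the full-branch Markov structure, (C3) the lower derivative bound, and (C4) the Renyi distortion condition). Your caution about verifying IV's hypotheses term by term is exactly the right thing to flag, since the paper itself only records this dictionary implicitly in the discussion preceding the corollary.
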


Since the transition point $s_\infty$ does not change if we replace $T$ by one of its iterates (cf. \cite[Remark 2.13]{IV}), under the assumptions of Corollary~\ref{criticalpoint} we deduce $$\mathrm{mdim}_M(I,d,T_A)
\,=\,\mathrm{mdim}_M(I,d,T_A^k),\qquad\forall\, k\geq1.$$

\smallskip

\section{Preliminaries}\label{prelim}
Let $(X,d)$ be a compact metric space and $T\colon X \to X$ be a continuous map.

\subsection{Metric mean dimension}

 For each $n \in \mathbb{N}$, define the Bowen metric
$$d_n(x,y) \,=\, \max_{0\,\leq\, j\,\leq\, n-1}\,d(T^j(x),\,T^j(y)) \qquad \forall\, x, y \in X$$
which is equivalent to $d$. We sometimes refer to balls with respect to this metric as dynamical balls.

\smallskip

Given a subset $K$ of $X$ and $\ep > 0$, consider the following minimum
\begin{equation}\label{def:S}
S(K,d,\ep) \,=\, \min \left\{ \ell\colon \,\{U_i\}_{1\,\leq \,i\,\leq \,\ell} \text{ finite open cover of $K$, $\mathrm{diam}(U_i, d) \leq \ep$}\right\}
\end{equation}
and the limit
$$h_\ep(K,d,T) \, = \, \lim_{n\,\to\,+\infty} \frac{1}{n}\log S(K,d_n,\ep),$$
which exists since the sequence $\big(\log S(K,d_n,\ep)\big)_n$ is sub-additive in the variable $n$. Recall that the \emph{topological entropy} is given by $h_{top}(K,T)\,=\,\lim_{\ep\,\to\,0^+}h_\ep(K,d,T).$ 

\begin{definition}\label{mdimT}
\emph{The upper/lower metric mean dimension of $(K,d,T)$ are given, respectively, by
$$\overline{\mathrm{mdim}}_M(K,d,T) \,=\, \limsup_{\ep\,\to\, 0^+}\, \frac{h_\ep(K,d,T)}{\log \,(1/\ep)}$$ and $$\underline{\mathrm{mdim}}_M(K,d,T) \,=\, \liminf_{\ep\,\to\, 0^+}\, \frac{h_\ep(K,d,T)}{\log \,(1/\ep)}.$$}
\end{definition}

\smallskip

A subset $E\subset K$ is said to be $\ep-$separated with respect to the metric $d$ if $d(x,y) \geq \ep$ for every $x, y \in E$; it is $\ep-$spanning with respect to the metric $d$ if for every $x\in K$ there exists some $y\in E$ such that $d(x,y)\leq\ep.$ The notion of upper/lower metric mean dimension can be equivalently defined if one replaces $S(K,d,\ep)$ by either
$$S_1(K,d,\ep)\,=\,\sup\Big\{\#E\colon \,E\subset K \text{ is $\ep-$separated}\Big\}$$
or
$$S_2(K,d,\ep) \,=\,\inf\Big\{\#E\colon \,E\subset K \textrm{ is $\ep-$spanning}\Big\}$$ and replaces their limits in $n$ by either $\limsup_n$ or $\liminf_n.$

\begin{remark}\label{mdimclosure}
   \emph{Since $S_1(K,d,\ep)\,\leq\, S_1(\overline{K},d,\ep)\,\leq\,S_1(K,d,\ep/6)$ for every $K\subset X$ and $\ep>0$, the metric mean dimension does not distinguish a set from its closure. That is, for every $K\subset X$,\begin{eqnarray*}
        \overline{\mathrm{mdim}}_M(\overline{K},d,T) &=&\overline{\mathrm{mdim}}_M(K,d,T) \\ \underline{\mathrm{mdim}}_M(\overline{K},d,T) &=&\underline{\mathrm{mdim}}_M(K,d,T),
    \end{eqnarray*} 
    }
\end{remark} 

\medskip

It is known that the metric mean dimension satisfies the following properties.

\begin{proposition}\label{conj}
     Let $(X_1,d_1)$ and $(X_2,d_2)$ be compact metric spaces and $T_1\colon X_1\to X_1$ and $T_2\colon X_2\to X_2$ be continuous maps.
     
     \smallskip 
     
     \noindent $(a)$ If there exists a surjective Lipschitz map $\pi\colon X_2\to X_1$ such that $T_1 \circ \pi\,=\, \pi\circ T_2$, then \begin{align*}
\overline{\mathrm{mdim}}_M(X_1,d_1,T_1)\,&\leq\,\overline{\mathrm{mdim}}_M(X_2,d_2,T_2)\\ \underline{\mathrm{mdim}}_M(X_1,d_1,T_1)\,&\leq\,\underline{\mathrm{mdim}}_M(X_2,d_2,T_2).
     \end{align*}

     \noindent $(b)$ Given a positive integer $k$, \begin{align*}     \overline{\mathrm{mdim}}_M(X_1,d_1,T^k_1)\,&\leq\,k\,\overline{\mathrm{mdim}}_M(X_1,d_1,T_1)\\ \underline{\mathrm{mdim}}_M(X_1,d_1,T^k_1)\,&\leq\,k\,\underline{\mathrm{mdim}}_M(X_1,d_1,T_1).
     \end{align*}
\end{proposition}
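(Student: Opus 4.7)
My plan is to lift separated sets via $\pi$. First I would note that, since $\pi$ is $L$-Lipschitz for some $L>0$ and intertwines $T_1$ with $T_2$, the computation $d_1(T_1^j \pi(x), T_1^j \pi(y)) = d_1(\pi(T_2^j x), \pi(T_2^j y)) \leq L \, d_2(T_2^j x, T_2^j y)$ and a maximum over $0 \leq j \leq n-1$ show that $\pi\colon (X_2, d_{2,n}) \to (X_1, d_{1,n})$ is also $L$-Lipschitz. Next, given a maximal $\epsilon$-separated set $E \subset X_1$ with respect to $d_{1,n}$, surjectivity of $\pi$ lets me pick $\tilde E \subset X_2$ with $\pi|_{\tilde E}\colon \tilde E \to E$ a bijection; distinct $\tilde x, \tilde y \in \tilde E$ then satisfy $d_{2,n}(\tilde x, \tilde y) \geq L^{-1} d_{1,n}(\pi \tilde x, \pi \tilde y) \geq \epsilon/L$, so $\tilde E$ is an $\epsilon/L$-separated subset of $(X_2, d_{2,n})$ of the same cardinality as $E$. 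This yields $S_1(X_1, d_{1,n}, \epsilon) \leq S_1(X_2, d_{2,n}, \epsilon/L)$, and taking $\limsup_n \tfrac{1}{n} \log(\cdot)$ on both sides delivers $h_\epsilon(X_1, d_1, T_1) \leq h_{\epsilon/L}(X_2, d_2, T_2)$.

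To close part (a), I would divide by $\log(1/\epsilon)$ and change variables $\delta = \epsilon/L$. Writing $\log(1/\epsilon) = \log(1/\delta) - \log L$, the ratio $\log(1/\delta)/\log(1/\epsilon) \to 1$ as $\epsilon \to 0^+$; since $h_\delta(X_2, d_2, T_2) \geq 0$, both the $\limsup$ and the $\liminf$ in $\delta$ agree with those in $\epsilon$, which yields the desired inequality for upper and lower metric mean dimensions. This logarithmic shift is the only place where any real care is needed; the rest is a routine transfer of separated-set estimates.

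\textbf{Part (b).} Here I would directly compare the Bowen metric $d_{1,n}^{(k)}$ of $T_1^k$ at length $n$ with the Bowen metric $d_{1, kn}$ of $T_1$ at length $kn$. Since the indices $\{0, k, 2k, \ldots, (n-1)k\}$ over which $d_{1,n}^{(k)}$ maximizes form a subset of $\{0, 1, \ldots, kn-1\}$, one has $d_{1,n}^{(k)}(x,y) \leq d_{1, kn}(x,y)$ pointwise, and so every $\epsilon$-separated set for $d_{1,n}^{(k)}$ is also $\epsilon$-separated for $d_{1, kn}$; in particular $S_1(X_1, d_{1,n}^{(k)}, \epsilon) \leq S_1(X_1, d_{1, kn}, \epsilon)$. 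Multiplying by $\tfrac{1}{n} = \tfrac{k}{kn}$ and passing to the limit along the subsequence $kn \to \infty$ (the limit in the definition of $h_\epsilon$ exists by subadditivity) gives $h_\epsilon(X_1, d_1, T_1^k) \leq k \, h_\epsilon(X_1, d_1, T_1)$, and dividing by $\log(1/\epsilon)$ followed by $\limsup_{\epsilon \to 0^+}$ or $\liminf_{\epsilon \to 0^+}$ concludes the proof.
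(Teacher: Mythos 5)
The paper states Proposition~\ref{conj} as a known fact and gives no proof of its own, so there is nothing to compare against; I can only assess your argument on its own terms. Both parts are correct and are the standard arguments. In part (a), the lift of a maximal $\epsilon$-separated set along the surjection $\pi$, the scaling of the separation by $1/L$, and the observation that the $\log L$ shift washes out in $\log(1/\epsilon)$ as $\epsilon\to 0^+$ are exactly what is needed; you also implicitly (and correctly) use $h_\delta\geq 0$ and WLOG $L\geq 1$, both harmless. In part (b), the pointwise inequality $d_{1,n}^{(k)}\leq d_{1,kn}$ and passage to the subsequence $m=kn$ is the right idea.

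One small inaccuracy in part (b): you bound $S_1$ (separated sets) and then invoke ``the limit in the definition of $h_\epsilon$ exists by subadditivity.'' Subadditivity in $n$ is a property of $\log S(\,\cdot\,,d_n,\epsilon)$ for \emph{covers}, not of $\log S_1$; with separated sets one only has a $\limsup_n$ or $\liminf_n$, and the $\liminf$ of a subsequence is $\geq$ (not $\leq$) the $\liminf$ of the full sequence, so the subsequential step would not go through in the lower case as written. The repair is immediate: the inequality $d_{1,n}^{(k)}\leq d_{1,kn}$ equally gives $S(X_1,d_{1,n}^{(k)},\epsilon)\leq S(X_1,d_{1,kn},\epsilon)$ for covering numbers, for which the limit in $n$ genuinely exists and the subsequence argument is valid; then divide by $\log(1/\epsilon)$ and take $\limsup$ or $\liminf$ in $\epsilon$ as you do. With that substitution the proof is complete.
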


\subsection{Box dimension} Given $K\subset X$, the \emph{upper and lower box dimension of $K$} is given by \begin{eqnarray*}
    \overline{\mathrm{dim}}_B(K,d) &=& \limsup_{\ep\,\to\, 0^+}\, \frac{\log S(K,d,\ep)}{\log \,(1/\ep)}\\ \underline{\mathrm{dim}}_B(K,d) &=& \liminf_{\ep\,\to\, 0^+}\, \frac{\log S(K,d,\ep)}{\log \,(1/\ep)}.
\end{eqnarray*} As happens with the metric mean dimension, this notion may be equivalently defined using spanning/separated subsets of $K$.

\subsection{Katok entropy}
Given an ergodic probability measure $\mu\in\cE_T(X)$, $n\geq1$, $\ep>0$ and $\delta\in(0,1)$, denote $$N_\mu(X,d,T,n,\ep,\delta)\,=\,\inf_B \,\big\{S(B,d_n,\ep)\colon \mu(B)>1-\delta\big\},$$ where the infimum is taken over all measurable subsets $B\subset X$ with measure $\mu$ bigger than $1-\delta$. In fact, the same value is attained if, instead of being just measurable, we assume that $B$ ranges over finite unions of $(n,\ep)-$dynamical balls whose union has measure bigger than $1-\delta.$ We will always consider such $B'$s. The \emph{$\delta-$Katok entropy of $\mu$ at scale $\ep$} is given by $$h_\mu^K(X,d,T,\ep,\delta)\,=\,\limsup_{n\,\to\,+\infty}\frac{1}{n}\log N_\mu(X,d,T,n,\ep,\delta).$$

The following variational principle links Katok entropy and metric mean dimension (see \cite{Shi} and \cite{CL}).

\begin{theorem}\label{pvkatok}
    Let $(X,d)$ be a compact metric space and $T\colon X\to X$ be a continuous map. Then, for every $\delta\in(0,1),$ \begin{eqnarray*}
        \overline{\mathrm{mdim}}_M(X,d,T)\,=\,\limsup_{\ep\,\to\,0^+}\frac{\sup_{\mu\,\in\,\cE_T(X)}h_\mu^K(X,d,T,\ep,\delta)}{\log(1/\ep)} \\ \underline{\mathrm{mdim}}_M(X,d,T)\,=\,\liminf_{\ep\,\to\,0^+}\frac{\sup_{\mu\,\in\,\cE_T(X)}h_\mu^K(X,d,T,\ep,\delta)}{\log(1/\ep)}.
    \end{eqnarray*}
\end{theorem}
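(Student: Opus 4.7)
The plan is to sandwich $\sup_{\mu\in\cE_T(X)} h_\mu^K(X,d,T,\ep,\delta)$ between $h_\ep(X,d,T)$ (from above) and $h_\ep(X,d,T)-C(\delta)$ (from below, at a comparable auxiliary scale), then normalize by $\log(1/\ep)$ and take $\limsup$ (resp.\ $\liminf$) as $\ep\to 0^+$; both the additive constant $C(\delta)$ and the multiplicative distortion in the scale vanish under this normalization, yielding the two equalities.

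The upper bound is immediate. Taking $B=X$ in the infimum defining $N_\mu(X,d,T,n,\ep,\delta)$ gives $N_\mu(X,d,T,n,\ep,\delta)\leq S(X,d_n,\ep)$ for every $n$ and every $\mu\in\cE_T(X)$, so $h_\mu^K(X,d,T,\ep,\delta)\leq h_\ep(X,d,T)$. Passing to suprema, dividing by $\log(1/\ep)$ and taking $\limsup$ (resp.\ $\liminf$) produces one inequality in each line of the theorem.

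For the reverse direction I would carry out a Misiurewicz-type empirical-measure construction adapted to the Katok setting. Fix $\ep>0$ and let $E_n\subset X$ be an $(n,\ep)$-separated set of maximal cardinality $S_1(X,d_n,\ep)$. Form the empirical measure
\[
\mu_n \,=\, \frac{1}{n\,|E_n|}\sum_{x\in E_n}\sum_{k=0}^{n-1}\delta_{T^k(x)},
\]
and extract, by weak-$*$ compactness, a subsequential limit $\mu\in\cP_T(X)$. Choose a finite Borel partition $\xi$ with $\diam(\xi)<\ep/3$ and whose boundary carries no $\mu$-mass; then the atoms of $\xi^{(n)}:=\bigvee_{k=0}^{n-1}T^{-k}\xi$ have $d_n$-diameter smaller than $\ep$. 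The separation of $E_n$ forces $H_{\mu_n}(\xi^{(n)})\geq \log |E_n|$, and a standard convexity/semi-continuity estimate transfers this lower bound to the limit $\mu$. Decomposing $\mu$ ergodically and running a pigeonhole argument on the atoms of $\xi^{(n)}$ of near-maximal weight (in the spirit of the Brin--Katok entropy formula) produces an ergodic component $\mu_*\in\cE_T(X)$ together with an $(n,\ep')$-spanning family of at most $e^{C(\delta)}|E_n|$ atoms covering a set of $\mu_*$-measure bigger than $1-\delta$, where $\ep'$ is a fixed fraction of $\ep$ and $C(\delta)$ is an absolute function of $\delta$. This yields
\[
h_{\mu_*}^K(X,d,T,\ep',\delta) \,\geq\, h_\ep(X,d,T) \,-\, C(\delta).
\]

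The main obstacle is the uniformity of the estimate: $C(\delta)$ must be independent of $\ep$, and the auxiliary scale must satisfy $\log(1/\ep')=\log(1/\ep)+O(1)$, so that the $1/\log(1/\ep)$ normalization of the metric mean dimension absorbs both the constant and the bounded scale shift when $\ep\to 0^+$. Both are automatic if the diameter of $\xi$ is chosen proportional to $\ep$; with these choices in place, dividing the inequality by $\log(1/\ep)$ and passing to $\limsup$ (resp.\ $\liminf$) delivers the claim.
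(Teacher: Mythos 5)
First, note that the paper does not prove Theorem~\ref{pvkatok}; it is stated with a citation to \cite{Shi} and \cite{CL}, so there is no internal proof to compare against. Your upper bound is correct and elementary: taking $B=X$ in the infimum gives $N_\mu(X,d,T,n,\ep,\delta)\leq S(X,d_n,\ep)$, hence $h_\mu^K(X,d,T,\ep,\delta)\leq h_\ep(X,d,T)$ for every ergodic $\mu$, and normalization yields one inequality in each line.

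The lower bound, however, has a genuine gap, and it is a gap of \emph{direction}, not merely of detail. Since $N_{\mu_*}(X,d,T,n,\ep',\delta)$ is an \emph{infimum} over all $B$ with $\mu_*(B)>1-\delta$, showing $h_{\mu_*}^K(X,d,T,\ep',\delta)\geq c$ requires proving that \emph{every} such $B$ needs on the order of $e^{nc}$ dynamical $\ep'$-balls to be covered, along a sequence of $n$'s. Your stated output --- "an $(n,\ep')$-spanning family of at most $e^{C(\delta)}|E_n|$ atoms covering a set of $\mu_*$-measure bigger than $1-\delta$" --- exhibits one \emph{small} cover of one large-measure set, which proves $N_{\mu_*}(n,\ep',\delta)\leq e^{C(\delta)}|E_n|$ and therefore an \emph{upper} bound on $h_{\mu_*}^K$, the opposite of what you then claim. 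What is actually needed after extracting an ergodic component $\mu_*$ with $h_{\mu_*}(T,\xi)\geq h_\ep(X,d,T)$ is a Shannon--McMillan--Breiman-type argument: for large $n$ the $\xi^{(n)}$-atoms whose $\mu_*$-measure exceeds $e^{-n(h_{\mu_*}(T,\xi)-\eta)}$ carry total mass below $\delta$, so any $B$ of measure $>1-\delta$ must meet exponentially many small atoms; one must then control, uniformly in $n$, how many $\xi^{(n)}$-atoms a single Bowen $\ep'$-ball can intersect. That last uniformity is a real obstacle in a general compact metric space and is not automatic from choosing $\diam\xi$ proportional to $\ep$; it is precisely what the cited proofs must handle carefully. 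A more minor slip: it is the atomic measure $\sigma_n=\frac{1}{|E_n|}\sum_{x\in E_n}\delta_x$, not your time-averaged $\mu_n$, that satisfies $H_{\sigma_n}(\xi^{(n)})=\log|E_n|$; Misiurewicz's argument transfers this bound to $H_{\mu_n}(\xi^{(q)})$ through a block decomposition of $\{0,\dots,n-1\}$, a step your sketch elides.
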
  

For further use, we also include the following easy consequence of the previous variational principle.

\begin{corollary}\label{aaa}
     Let $(X,d)$ be a compact metric space and $T\colon X\to X$ be a continuous map. Then, \begin{eqnarray*}
         \overline{\mathrm{mdim}}_M(X,d,T)\,=\,\overline{\mathrm{mdim}}_M\big(\bigcap_{k=0}^\infty T^{k}(X),d,T\big) \\ \underline{\mathrm{mdim}}_M(X,d,T)\,=\,\underline{\mathrm{mdim}}_M\big(\bigcap_{k=0}^\infty T^{k}(X),d,T\big).
     \end{eqnarray*}
\end{corollary}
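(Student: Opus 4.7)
The plan is to reduce the statement to the Katok variational principle (Theorem~\ref{pvkatok}). Set $Y = \bigcap_{k=0}^\infty T^k(X)$. Since $T$ is continuous on the compact space $X$, the sets $T^k(X)$ form a decreasing sequence of non-empty compacta, so $Y$ is non-empty and compact; moreover $T(Y) = Y$, and in particular the restriction $T|_Y$ is a well-defined continuous self-map of $Y$.

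The first step is to observe that every $T$-invariant Borel probability measure $\mu$ on $X$ satisfies $\mu(Y) = 1$. This follows by induction on $k$: $\mu(X) = 1$, and assuming $\mu(T^k(X)) = 1$, the inclusion $T^k(X) \subseteq T^{-1}(T^{k+1}(X))$ together with the $T$-invariance of $\mu$ yields $\mu(T^{k+1}(X)) = \mu(T^{-1}(T^{k+1}(X))) \geq \mu(T^k(X)) = 1$. Taking $k \to \infty$ gives $\mu(Y) = 1$. Consequently, the sets of ergodic measures $\cE_T(X)$ and $\cE_{T|_Y}(Y)$ are canonically identified.

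The second step, which I expect to be the main technical obstacle, is to check that for every such $\mu$ and every $\ep > 0$, $\delta \in (0,1)$,
$$h_\mu^K(X,d,T,\ep,\delta) \,=\, h_\mu^K(Y,d,T|_Y,\ep,\delta).$$
The inequality $\leq$ follows because any finite union $B \subseteq Y$ of $(n,\ep)$-dynamical balls with $\mu(B) > 1 - \delta$ is also a finite union of $(n,\ep)$-dynamical balls in $X$ with the same measure, which provides an admissible competitor in the definition of $N_\mu(X,\ldots)$. For the reverse, given an admissible $B \subseteq X$ the set $B \cap Y$ has $\mu(B \cap Y) = \mu(B) > 1 - \delta$ since $\mu(Y) = 1$, and intersecting with $Y$ any $(d_n,\ep)$-open cover of $B$ yields a cover of $B \cap Y$ of the same cardinality, whose elements still have $d_n$-diameter at most $\ep$. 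One has to be careful that the intersection with $Y$ preserves the structural properties required by \eqref{def:S} (openness and diameter control), but this is a routine verification.

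Finally, I apply Theorem~\ref{pvkatok} separately to $(X,d,T)$ and to $(Y,d,T|_Y)$, and take $\limsup_{\ep \to 0^+}$ (respectively $\liminf$) of the resulting ratios. Because the suprema are taken over the same family of ergodic measures, and the Katok entropies coincide term by term, the upper (respectively lower) metric mean dimensions of $(X,d,T)$ and of $(Y,d,T|_Y)$ are equal, which is the desired conclusion.
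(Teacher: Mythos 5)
Your proposal is correct and follows what the paper intends: Corollary~\ref{aaa} is stated there as an immediate consequence of Theorem~\ref{pvkatok}, and the natural argument is exactly yours — every $T$-invariant measure gives full mass to $\bigcap_{k\geq0}T^k(X)$, the Katok entropies at each scale agree (up to the harmless $\ep$-fudge you flag, which disappears after dividing by $\log(1/\ep)$), and the variational principle applied to both systems yields the equality.
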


\medskip

\section{Proof of Theorem~\ref{n=z}}\label{proofA}

In this section we show that the unilateral and bilateral subshifts of compact type generated by a closed transition set have the same metric mean dimension.  The next arguments hold for both upper and lower metric mean dimension, so to simplify the notation we will not distinguish them.

In what follows, $\KK=\NN$ or $\ZZ$ and, given a closed set $\Ga\subset X\times X$, we consider the subshift of compact type $$\Ga_\KK\,=\,\{ (x_i)_{i\in \KK}\,\colon (x_i,x_{i+1})\in\Ga,\,\,\forall\, i\in\KK \}$$ endowed with the distance $$d^\KK(x,y)\,=\,\sup_{i\,\in\,\KK}\frac{d(x_i,y_i)}{2^{|i|-1}}.$$

    Denote $Z\,=\,\bigcap_{k=0}^\infty \si_\NN^{k}(\Ga_\NN)$ and keep the notation $d^\NN$ and $\si_\NN$ for the distance and dynamics restricted to $Z$, respectively.  By Corollary~\ref{aaa}, to prove Theorem~\ref{n=z} it is enough to show that
    $$\mathrm{mdim}_M(Z,d^\NN,\si_\NN)\,=\,\mathrm{mdim}_M(\Ga_\ZZ,d^\ZZ,\si_\ZZ).$$
    
    Firstly, note that the projection $\pi\big((x_i)_{i\in\ZZ}\big)\,=\,(x_i)_{i\in\NN}$ is Lipschitz, $\pi\,\circ\,\si_\ZZ\,=\,\si_\NN\,\circ\,\pi$ and $\pi( \Ga_\ZZ)\,=\,Z$. Hence, the restriction $$\pi\colon \Ga_\ZZ\,\longrightarrow\, Z$$ satisfies the conditions in Proposition~\ref{conj}, and therefore $$\mathrm{mdim}_M(Z,d^\NN,\si_\NN)\,\leq\,\mathrm{mdim}_M(\Ga_\ZZ,d^\ZZ,\si_\ZZ).$$

    To show the reverse inequality we will make use of the variational principle provided by Theorem~\ref{pvkatok}. Given a subshift  $Y\subset X^\KK$, a finite set of coordinates $i_1,\cdots,i_k\in\KK$ and a collection $U_1,\cdots,U_k\subset X$ of open subsets, the associated cylinder is defined by
    \begin{equation}\label{cylinderdef}
    C(Y\,;\,i_1,\cdots,i_k\,;\,U_1,\cdots, U_k)\,=\,\big\{(x_i)_{i\in\KK}\in Y\colon x_{i_j}\in U_j,\,\forall j=1,\cdots ,k\big\}.
    \end{equation} These sets are generators of the topology of $Y$ and every dynamical ball is a cylinder. Thus, for any ergodic measure $\nu\in\cE_{\si_\KK}(Y)$, we can compute $$N_\nu(Y,d^\KK,\si_\KK,n,\ep,\delta)\,=\,\inf_B \,\big\{S(B,d_n^\KK,\ep)\colon \nu(B)>1-\delta\big\}$$ by taking the infimum over finite unions of cylinders in $Y$ whose union has measure $\nu$ bigger than $1-\delta$. Note that, for every ergodic measure $\mu\in\cE_{\si_\ZZ}(\Ga_\ZZ)$, its push-forward  $\pi_*\mu\in\cE_{\si_\NN}(Z)$ is ergodic. Moreover, every finite union of cylinders in $Z$ with measure $\pi_*\mu$ bigger than $1-\delta$ has as a pre-image by $\pi$ a finite union of cylinders with measure $\mu$ bigger than $1-\delta$. Consequently, 
    \begin{equation}\label{bbb}
        N_\mu (\Ga_\ZZ,d_\ZZ,\si_\ZZ,n,\ep,\delta) \,=\, \inf_{\substack{ B\, \subset\,\Ga_\ZZ \\\mu(B)>1-\delta}  } S(B,d^\ZZ_n,\ep)\,\leq \,\inf_{\substack{ A\, \subset\, Z\\ \pi_*\mu(A)>1-\delta} } S(\pi^{-1}(A),d_n^\ZZ,\ep).
    \end{equation}

    We proceed by estimating the right hand side in \eqref{bbb}. For each $\ep>0,$ let $\ell=\ell(\ep)$ be a positive integer such that $2^{-\ell}\diam X<\ep$ and $\cU=\{ U_1,...,U_{M(\ep)} \}$ be any open cover of $X$ satisfying $\diam(\cU,d)<\ep.$ Given an open cover $\alpha$ of a set $A\subset Z$ such that $\diam(\alpha,d_n^\NN)\leq\ep$, then $$  \beta = \{ ...\times X \times X \times U_{j_{-\ell}} \times ... \times U_{j_{0}} \times V : V\in\alpha,\,\forall j_i =1,...,M(\epsilon)  \}. $$ is an open cover of $\pi^{-1}(A)\subset \Ga_\ZZ$ satisfying $\diam(\beta,d_n^\ZZ)\leq\ep$ and $\#\beta=\big(M(\ep)\big)^{\ell+1} \#\alpha$. Thus, for every $A\subset Z$ we have $$ S(\pi^{-1}(A),d_n^\ZZ,\ep)\,\leq\, \big(M(\epsilon)\big)^{\ell+1} \,S(A, d_n^\NN,\ep).$$ Combining this information with \eqref{bbb} we get 
\begin{align*}\label{ccc}
        N_\mu^\ZZ (\Ga_\ZZ,d_\ZZ,\si_\ZZ,n,\epsilon,\delta) \,&\leq \,\inf_{\substack{ A\, \subset\, Z\\ \pi_*\mu(A)>1-\delta} }\,S(\pi^{-1}(A),d_n^\ZZ,\ep) \\ & \leq\,  \big(M(\epsilon)\big)^{\ell+1}\, \inf_{\substack{ A\, \subset\, Z\\ \pi_*\mu(A)>1-\delta} }\, S(A,d_n^\NN,\ep)\\ & =\, \big(M(\epsilon)\big)^{\ell+1}\,\, N_{\pi_*\mu}^\NN (Z,d_\NN,\si_\NN,n,\epsilon,\delta).
\end{align*} Hence, for every ergodic probability measure $\mu\in\cE_{\si_\ZZ}(\Ga_\ZZ)$ we have $$h^K_\mu(\Ga_\ZZ,d_\ZZ,\si_\ZZ,\ep,\delta)\,\leq\,h^K_{\pi_*\mu}(Z,d_\NN,\si_\NN,\ep,\delta)$$ and therefore $$\sup_{\mu\,\in\,\cE_{\si_\ZZ}(\Ga_\ZZ)}h^K_\mu(\Ga_\ZZ,d_\ZZ,\si_\ZZ,\ep,\delta)\,\leq\,\sup_{\nu\,\in\,\cE_{\si_\NN}(Z)}h^K_\nu(Z,d_\NN,\si_\NN,\ep,\delta).$$
Applying now Theorem~\ref{pvkatok}, we obtain $$\mathrm{mdim}_M(\Ga_\ZZ,d^\ZZ,\si_\ZZ)\,\leq\,\mathrm{mdim}_M(Z,d^\NN,\si_\NN)$$ and the proof of the proposition is complete. \qed

\medskip

\section{Proof of Theorem~\ref{corodef}}\label{proofB}

In this section we recall Friedland's topologial entropy of a free semigroup action of continuous maps on a compact metric space X, which inspires a corresponding notion of metric mean dimension. Afterwards, we present the proposal in \cite{CRV} for the metric mean dimension of a free semigroup action with respect to a fixed random walk. We proceed by establishing a variational principle connecting these concepts, from which Theorem~\ref{corodef} is a direct consequence.

\subsection{Free semigroup actions}

Let $(X,d)$ be a compact metric space and $G_1\,=\,\{g_1,\cdots,g_p\}$ be a family of continuous maps. Denote by $G$ the free semigroup having $G_1$ as a generator, where the semigroup operation $\circ$ is the composition of maps. Let $\SSS$ be the induced free semigroup action

$$
\begin{aligned}
 \SSS:G\times X \to X\\
(g,x) \mapsto g(x).
\end{aligned}
$$ 

\smallskip

Denoting the index set of $G_1$ by $Y\,=\,\{1,\cdots,p\}$, we endow the product space $Y^\NN$ with the metric $$d'(w,\om)=2^{- \min\{j\,\geq\,1\,:\, w_j\,\ne\, \om_j\}}$$ and consider the skew-product $T_G$ associated to the action $\SSS$: $$
\begin{aligned}
 T_G\colon\,Y^\NN\times X\quad &\to\quad Y^\NN\times X\\
(w,x) \quad&\mapsto\quad (\si(w),g_{w_1}(x)),
\end{aligned} $$ where $w\,=\,(w_1,w_2,\cdots).$

\subsubsection{\emph{\textbf{Friedland's approach}}}

For a finite set of continuous maps $G_1\,=\,\{g_1,\cdots,g_p\}$, we consider the transition set $$\Ga(G_1)\,=\,\bigcup_{i=1}^p \graph(g_i)$$ and the associated subshift of compact type $$\Ga(G_1)_\NN \,=\, \big\{ (x,\,g_{w_1}(x),\cdots,\,g_{w_k}\circ...\circ g_{w_1}(x),\cdots)\,:\,w\,\in\,Y^\NN \big\}. $$
In what follows, the composition operation between two maps will be denoted by their concatenation.


\medskip

\begin{definition}
    \emph{The Friedland topological entropy of $\SSS$ with respect to $G_1$ is defined as $$h^F_{top}(X,\SSS,G_1)\,=\, h(\Ga(G_1)_\NN,\si).$$ }
\end{definition}

Actually, Friedland defined the entropy of $\SSS$ as the infimum of $h^F_{top}(X,\SSS,G_1)$ over all finite sets of generators $G_1$ of $G$. As both definitions that we consider are expressed in terms of a particular set of generators, we keep this dependence and omit $G_1$ from the notation. We now define the corresponding notion for the metric mean dimension.

\begin{definition}
    \emph{The upper/lower Friedland metric mean dimension of $(X,d,\SSS)$ with respect to the set of generators $G_1$ is given by \begin{equation}\label{defsg}
        \begin{split}
            \overline{\mathrm{mdim}}^F_M(X,d,\SSS)\,&=\, \overline{\mathrm{mdim}}_M(\Ga(G_1)_\NN,d^\NN,\si) \\ \underline{\mathrm{mdim}}^F_M(X,d,\SSS)\,&=\, \underline{\mathrm{mdim}}_M(\Ga(G_1)_\NN,d^\NN,\si).
        \end{split}
    \end{equation}}

\end{definition}

\smallskip

 By Theorem~\ref{n=z}, the above definition is independent of whether $\KK\,=\,\NN$ or $\KK\,=\,\ZZ.$ The aim of Theorem~\ref{corodef} is to ensure that the above definition coincides with Lindenstrauss-Weiss' one in the case when $G_1=\{T\}$.

\subsubsection{\emph{\textbf{Carvalho, Rodrigues and Varandas' approach}}}

Now we present the definition of metric mean dimension introduced in \cite{CRV4}. This perspective is inspired by Bufetov (see \cite{Buf}), where one selects randomly which element of $G_1$ will be used to evolve time on each step.  

Following \cite{CRV4}, we shall code different concatenations of elements in $G_1$ by distinct sequences of symbols. Even though an element of the group may be generated by different combinations of generators, we will simply consider different concatenations instead of the elements in the group they create, since we do not make use of the group structure.

For every $ w\in Y^\NN$ and $n\geq1$, consider the metric in $X$ given by $$ d_{w,n}(x,y)\,= \,\max_{0\leq j\leq n} d(g_{w_j} g_{w_{j-1}}...g_{w_1} (x),g_{w_j} g_{w_{j-1}}...g_{w_1} (y)). $$
Fix a \emph{random walk}, that is, a shift invariant Borel probability measure $\PP\in\cP_\si(Y^\NN)$. The \emph{$\ep-$entropy} of $(X,d,\SSS,\PP)$ is given by $$h_\ep(X,d,\SSS,\PP)\,=\, \limsup_{n\,\to\,+\infty}\frac{1}{n}\log \int_{Y^\NN}S(X,d_{w,n},\ep)d\PP(w).$$ The \emph{topological entropy of $\SSS$ with respect to the set of generators $G_1$} (cf. \cite{CRV4}) is given by $$h_{top}(X,\SSS)\,=\,\sup_{\PP}\,\lim_{\ep\,\to\,0^+}\,h_\ep(X,d,\SSS,\PP).$$

\begin{definition}\label{mdimsg}
\emph{The upper/lower metric mean dimension of $(X,d,\SSS)$ with respect to the set of generators $G_1$ and the random walk $\PP$ are given, respectively, by
\begin{eqnarray*}
    \overline{\mathrm{mdim}}_M(X,d,\SSS,\PP) \,& =&\, \limsup_{\ep\,\to\, 0^+}\, \frac{h_\ep(X,d,\SSS,\PP)}{\log \,(1/\ep)} \\ \underline{\mathrm{mdim}}_M(X,d,\SSS,\PP) \,& = &\, \liminf_{\ep\,\to\, 0^+}\, \frac{h_\ep(X,d,\SSS,\PP)}{\log \,(1/\ep)}.
\end{eqnarray*}}
\end{definition}

Once again, one can replace $S$ in the definition by either $S_1$ or $S_2$ as in Definition~\ref{mdimT}. 

\begin{definition}
   \emph{Given a compact metric space $M$, we say that $\nu\in\cP(M)$ is homogeneous if it is a fully supported doubling measure, namely, $$\exists L>0\colon\qquad \nu(B(x,\ep))\leq L \textrm{ }\nu(B(y,\ep/2)),\qquad\forall x,y\in M,\,\,\forall\ep>0. $$}
\end{definition}

Note that if a random walk $\PP\,\in\,\cP_\si(Y^\NN)$ is homogeneous, then $$\PP(B(w,2^{-k+1})) \,\geq\, (1/L)^k \,\PP(B(w,1/2)) \,=\,(1/L)^k,$$ for every $w\in Y^\NN$ and $k\geq1$, where $$B(w,2^{-k+1})\,=\,\{\om\in Y^\NN\colon \om_i=w_i, i=1,...,k\}.$$ In what follows, we adopt the notation $\overline{w_1...w_k}\,=\,B(w,2^{-k+1}).$

\subsubsection{\emph{\textbf{Variational principle}}}

Our main result in this section states that the two aforementioned notions of metric mean dimension of finitely generated free semigroup actions are equal. Theorem~\ref{corodef} corresponds to the particular case of $G_1=\{T\}$.

\begin{theorem}\label{sg} 
Let $(X,d)$ be a compact metric space and $G_1\,=\,\{g_1,\cdots,g_p\}$ be a collection of continuous maps in $X$. Then,
  \begin{eqnarray}\label{eq1}\label{eq1}
      \overline{\mathrm{mdim}}^F_M(X,d,\SSS) \,&=& \,\max_\PP\, \overline{\mathrm{mdim}}_M(X,d,\SSS,\PP) \\ \nonumber\underline{\mathrm{mdim}}^F_M(X,d,\SSS) \,&=& \,\max_\PP \,\underline{\mathrm{mdim}}_M(X,d,\SSS,\PP).
  \end{eqnarray}
       Moreover, the previous maxima are attained at \emph{every} homogeneous random walk $\PP$ and we have \begin{equation}\label{eqskew}
      \overline{\mathrm{mdim}}^F_M(X,d,\SSS) \,= \, \overline{\mathrm{mdim}}_M(X\times I^{\NN},\,d\times d',\,T_G).
  \end{equation}
\end{theorem}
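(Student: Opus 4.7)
My plan is to establish a cycle of four inequalities among the quantities
\[
\sup_\PP \overline{\mathrm{mdim}}_M(X,d,\SSS,\PP), \quad \overline{\mathrm{mdim}}_M(X,d,\SSS,\PP_{\mathrm{hom}}), \quad \overline{\mathrm{mdim}}^F_M(X,d,\SSS), \quad \overline{\mathrm{mdim}}_M(X\times Y^\NN,d\times d',T_G),
\]
where $\PP_{\mathrm{hom}}$ stands for any homogeneous random walk (e.g. the uniform Bernoulli measure, whose doubling constant is $L=p$); once the cycle is closed, all four values agree and the maximum in \eqref{eq1} is automatically attained at every homogeneous $\PP$. The arguments are identical for the upper and lower metric mean dimensions, so I focus on the upper one.

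For $\overline{\mathrm{mdim}}_M(X,d,\SSS,\PP) \le \overline{\mathrm{mdim}}^F_M(X,d,\SSS)$, valid for every $\PP \in \cP_\si(Y^\NN)$, I would observe that for each fixed $w\in Y^\NN$ the injection $\varphi_w\colon X\to\Ga(G_1)_\NN$, $x\mapsto (x,g_{w_1}x,g_{w_2}g_{w_1}x,\ldots)$ satisfies $(d^\NN)_n(\varphi_w(x),\varphi_w(y)) \ge \tfrac12\,d_{w,n}(x,y)$, because the weight $\rho^{-|i-1|}$ in $d^\NN$ equals $1$ at $i=1$ and $1/\rho$ at $i=0$. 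Hence $S_1(X,d_{w,n},\ep)\le S_1(\Ga(G_1)_\NN,(d^\NN)_n,\ep/2)$; integrating against $\PP$, taking $\tfrac1n\log$ and normalising by $\log(1/\ep)$ yields the claim.

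For the reverse $\overline{\mathrm{mdim}}^F_M \le \overline{\mathrm{mdim}}_M(X,d,\SSS,\PP_{\mathrm{hom}})$ with $\PP_{\mathrm{hom}}$ homogeneous of doubling constant $L$, set $J=J(\ep):=\lceil\log_2(\diam(X)/\ep)\rceil+1$. The geometric weights in $d^\NN$ render $(d^\NN)_n$-balls of radius $\ep$ insensitive to coordinates past index $n+J$, and within each cylinder $[\alpha]\subset Y^\NN$ of length $n+J$ the first $n+J$ coordinates of $\varphi(w,x)$ depend only on $\alpha$ and $x$—the tail contributing at most $O(\ep)$. Decomposing $\Ga(G_1)_\NN$ along such cylinders produces
\[
S_2\bigl(\Ga(G_1)_\NN,(d^\NN)_n,\ep\bigr) \le C\,p^J \sum_{\beta\in Y^n} S_2\bigl(X,d_{\beta,n},c\ep\bigr)
\]
for constants $C,c>0$ independent of $n$ and $\ep$. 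Since $\PP_{\mathrm{hom}}([\beta]) \ge L^{-n}$ for each $\beta\in Y^n$, the sum is bounded by $L^n \int_{Y^\NN} S_2(X,d_{w,n},c\ep)\,d\PP_{\mathrm{hom}}(w)$. Taking $\tfrac1n\log$ makes $(J\log p+\log C)/n\to 0$, leaving $\log L + h_{c\ep}(X,d,\SSS,\PP_{\mathrm{hom}})$; dividing by $\log(1/\ep)$ and letting $\ep\to 0^+$ kills the $\log L$ contribution and makes $\log(1/c\ep)/\log(1/\ep)\to 1$, yielding the desired bound. The identification with the skew-product follows from the analogous fibered count
\[
\int S_2(X,d_{w,n},\ep)\,d\PP \;\le\; S_2\bigl(X\times Y^\NN,(d\times d')_n,\ep\bigr) \;\le\; p^J L^n \int S_2(X,d_{w,n},\ep)\,d\PP_{\mathrm{hom}}(w),
\]
the right inequality obtained by concatenating $(\alpha 0^\infty,x)$ with $x$ in an $(n,\ep)$-span of $X$ under $d_{\alpha,n}$, for each $\alpha\in Y^{n+J-1}$. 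Normalising exactly as before produces $\overline{\mathrm{mdim}}_M(X,d,\SSS,\PP) \le \overline{\mathrm{mdim}}_M(X\times Y^\NN,d\times d',T_G) \le \overline{\mathrm{mdim}}_M(X,d,\SSS,\PP_{\mathrm{hom}})$, which closes the cycle.

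The main technical obstacle is keeping the two sources of excess constant under control: the factor $p^{J(\ep)}$, which appears because the $\sigma$-Bowen metric on $\Ga(G_1)_\NN$ sees $O(\log(1/\ep))$ more coordinates than $d_{w,n}$, and the factor $L^n$ from the homogeneity estimate. The key observation is that the former is harmless after dividing by $n$ (since $J(\ep)$ is $n$-independent) while the latter is harmless after dividing by $\log(1/\ep)$ (since $\log L$ is $\ep$-independent). A secondary subtlety is that the natural semi-conjugacy $\varphi\colon Y^\NN\times X\to\Ga(G_1)_\NN$ is in general \emph{not} bi-Lipschitz when the generators $g_i$ are only continuous (the iterated modulus of continuity $\omega^i$ can outgrow the weight $\rho^{i-1}$), which prevents a direct application of Proposition~\ref{conj}(a) and forces every estimate to be made at the level of spanning numbers.
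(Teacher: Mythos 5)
Your argument is correct in substance and, for the variational identity \eqref{eq1}, rests on the same two pillars as the paper: in one direction you push $d_{w,n}$-separated sets of $X$ into $\Ga(G_1)_\NN$ via the orbit map (identical to the paper's step, up to your explicit factor $1/2$ bookkeeping for the last coordinate), and in the other direction you combine the facts that $(d^\NN)_n$-balls of radius $\ep$ only feel the first $n+O(\log(1/\ep))$ coordinates with the homogeneity bound $\PP_{\mathrm{hom}}([\beta])\geq L^{-|\beta|}$, noting that all exponential-in-$n$ losses with $\ep$-independent rate die under the $\log(1/\ep)$ normalization. The implementations differ: the paper runs a pigeonhole argument on a maximal $(n,\ep)$-separated subset of $\Ga(G_1)_\NN$ (extracting a sub-family following a single word of length $n+\ell$ and losing $(pL)^{n+\ell}$), whereas you decompose the subshift over \emph{all} cylinders of length $n+J$ and sum spanning numbers — a dual but equivalent count. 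One bookkeeping slip: in your displayed inequality the constant $C$ cannot be taken independent of $\ep$ if the fibers are spanned only with $d_{\beta,n}$, since the coordinates $n+1,\dots,n+J$ still carry weights $2^{-1},\dots,2^{-J}$ and closeness in $d_{\beta,n}$ gives no control of these extra iterates for merely continuous generators; you should span with $d_{\alpha,n+J}$ over cylinders $\alpha$ of length $n+J$, and then either keep time $n+J$ (which does not change $\limsup_n \tfrac1n\log$) or reduce to time $n$ at the cost of a factor of order $S(X,d,c\ep)^{J}$, which is $\ep$-dependent but $n$-independent and hence still harmless — so the conclusion is unaffected. The genuinely different part is \eqref{eqskew}: you prove it directly by the two-sided fibered spanning estimates for the skew product (a Bufetov-type count, again with the $p^{J}L^{n}$ loss absorbed by the normalizations), making the proof self-contained, whereas the paper deduces \eqref{eqskew} from the attainment at Bernoulli measures combined with the external variational principle of \cite[Corollary III]{CRV4}; your route buys independence from that reference at the price of redoing the skew-product comparison, while the paper's is shorter.
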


\medskip

The equality \eqref{eqskew} is a direct consequence of the fact that the maximum in \eqref{eq1} is attained by any fully supported Bernoulli measure $\PP$ - which is homogeneous - combined with \cite[Corollary~III]{CRV4}. In fact, the choice of metric in $Y^\NN$ considered in \cite{CRV4} is different from our choice $d'$,  but since both metrics are uniformly equivalent so are their products with $d$ and therefore their corresponding metric mean dimensions of the skew-product coincide.

\begin{remark}
    \emph{We observe that a statement analogous to Theorem~\ref{sg} for the topological entropy of a semigroup action does \emph{not} hold. Friedland showed \cite[Lemma 3.6]{Fried} that if $T$ is a non-involutive homeomorphism ($T^2\ne Id$) of a compact metric space, then $h_{top}^F(\SSS)\geq\log2.$ If we consider the phase space to be the unit circle, both $T$ and $T^{-1}$ have zero topological entropy, and since compositions of them only represent delay in time, we have $h_{top}(\SSS,\PP)\,=\,0$ for every $\PP.$ So $$h_{top}^F(\SSS) \,>\, \max_{\PP} \,h_{top}(\SSS,\PP).$$
    Actually, the Friedland subshift is a Lipschitz factor of the skew-product map - in particular, $h_{\ep}(\Ga_\NN,d^\NN,\si)\leq h_{\ep}(T_G)$ - and the entropy of the action relates to the one of the skew product by Bufetov's formula (cf. \cite{Buf}): if $\PP_p$ is the symmetric Bernoulli measure in the $p$ symbols of $Y$, then $$h_{top}(\SSS,\PP_p)\,=\, h_{top}(T_G)\,-\,\log p.$$ Whereas, in the case of metric mean dimension, the normalization $\log(1/\ep)$ in its definition makes the second term in the right hand side  vanish and we obtain \eqref{eq1} and \eqref{eqskew}. 
    }
\end{remark}

\begin{proof}[Proof of Theorem~\ref{sg}]
    
 We start by proving that the Friedland metric mean dimension is an upper bound for the one induced by any random walk. Let $w\in Y^\NN$. For each $n\geq1,$ consider $E\subset X$ a maximal $\ep-$separated subset with respect to the metric $d_{w,n}.$ That is, for every $x,y\in E$ we have
$$\ep<\max\big\{ d(x,y), d(g_{w_1}(x),g_{w_1}(y)),\cdots,d(g_{w_n}... g_{w_1}(x),g_{w_n}...g_{w_1}(y)) \big\}.$$Let us denote simply $\Ga\,=\,\Ga(G_1)$. Then, the elements of $\Ga_\NN$ \begin{align*}
      \big(&x,g_{w_1}(x),\cdots,g_{w_n}... g_{w_1}(x),\footnotesize{\text{any admissible tail}}\big) \\\big(&y,g_{w_1}(y),\cdots ,g_{w_n}...g_{w_1}(y),\footnotesize{\text{any admissible tail}}\big)
\end{align*}

\smallskip 

\noindent are $(\si,n,\ep)$ separated. Hence, for every $\PP\in\cP_{\si}(Y^\NN)$ we have 
\begin{align*}
    h_\ep(X,d,\SSS,\PP) \,&=\,\limsup_{n\,\to\,+\infty}\,\frac{1}{n}\,\log \int_{Y^\NN}S_1(X,d_{w,n},\ep)\,d\PP(w) \\&\leq\, \limsup_{n\,\to\,+\infty}\,\frac{1}{n}\,\log S_1(\Ga_\NN,d^\NN_n,\ep)\\&=\, h_\ep(\Ga_\NN,d^\NN,\si)
\end{align*}

Now let us show that the maximum is attained by any homogeneous $\PP\in\cP_{\si}(Y^\NN)$. Take one such $\PP$. Then, for every $w\in Y^\NN$ and $k\geq1$, we have $$\PP(\overline{w_1...w_k}) \,\geq\,(1/L)^k.$$ Given $n\geq1$ and $\ep>0$, let $E=\{x_1,x_2,...,x_{N}\}$ be a maximal $(\si,n,\ep)-$separated subset in $\Ga_\NN$, that is, $N=S_1(\Ga_\NN,d_n^\NN,\ep)$. Take $\ell=\ell(\ep)$ such that $2^{-\ell}\diam(X) \leq \ep.$ By the Pigeonhole Principle, there exists a subset $A\subset E$ such that 
\begin{itemize}
    \item all the transitions until the coordinate $n+\ell$ are given by the same $(n+\ell)-$tuple, which we denote by $(g_{w_1},g_{w_2},...,g_{{w_{n+\ell}}})$;
    \item $a:=|A|\geq \frac{N}{p^{n+\ell}}.$ 
\end{itemize}
 Note that, by definition of $\ell$, the property of being separated only depends on the first $n+\ell$ coordinates of the elements of $E$. In particular, the first coordinate of all the elements in $A$ are necessarily different from each other. Reordering if necessary, we denote the elements of $A$ by $$ x_1 = \big(x^1,\,g_{w_1}(x^1),\,g_{w_2} g_{w_1}(x^1),\,\cdots,g_{w_{n+\ell}}...g_{w_1}(x^1),\footnotesize{\text{any admissible tail}}\big) $$ 
 $$ x_2 = \big(x^2,\,g_{w_1}(x^2),\,g_{w_2} g_{w_1}(x^2),\,\cdots,g_{w_{n+\ell}}...g_{w_1}(x^2),\footnotesize{\text{any admissible tail}}\big) $$ 
 $$\vdots$$ $$ x_a = \big(x^a,\,g_{w_1}(x^a),\,g_{w_2} g_{w_1}(x^a),\,\cdots,g_{w_{n+\ell}}...g_{w_1}(x^a),\footnotesize{\text{any admissible tail}}\big) .$$ Then, the set $\{x^1,...,x^a\}\subset X$ is $\ep-$separated with respect to the metric $d_{\om,n+\ell}$ for every $\om\in \overline{w_1...w_{n+\ell}}\subset Y^\NN.$ In particular, the balls of radius $\ep/3$ with respect to $d_{\om,n+\ell}$ centered at the elements of $A$ are pairwise disjoint. Hence, $$S_2(X,d_{\om,n},\ep/3)\,\geq\,a\,\geq\,\frac{N}{p^{n+\ell}}$$ for every $\om\in \overline{w_1...w_{n+\ell}}$. So,
\begin{align*}
    \int_{Y^\NN}S_2(X,d_{\om,n},\ep/3)\,d\PP(\om)\, &\geq\, \int_{ \overline{w_1...w_{n+\ell}}}S_2(X,d_{\om,n},\ep/3)\,d\PP(\om) \\  &\geq\,\frac{N}{p^{n+\ell}}\,\,\PP(\overline{w_1...w_{n+\ell}}) \\&\geq\,\frac{N}{p^{n+\ell}}\,(1/L)^{n+\ell}.
\end{align*}
Therefore, \begin{align*}
    h_{\ep/3}(X,d,\SSS,\PP) \,&=\,\limsup_{n\,\to\,+\infty}\,\frac{1}{n}\,\log \int_{Y^\NN}S_2(X,d_{\om,n},\ep/3)\,d\PP(\om) \\&\geq\, h_\ep(\Ga_\NN,d^\NN,\si)\,-\log(p\,L).
\end{align*}
We conclude after dividing by $\log(1/\ep)$ and making $\ep\to0^+$.
\end{proof}

\subsection{Zero complexity maps may generate positive metric mean dimension}

In this subsection we will make use of Theorem~\ref{sg} to show that two maps with zero metric mean dimension may generate a free semigroup action with positive metric mean dimension. Let $X=[0,1]$ endowed with the Euclidean distance $d$.

\begin{proposition}\label{mdim1/2}
    There exist two zero entropy continuous maps $g_1,g_2\colon [0,1]\,\to\,[0,1]$ satisfying $$\mathrm{mdim}^F_M([0,1],d,\SSS)\,=\,\frac{1}{2}.$$ In particular, the topological entropy of the action is infinite.
\end{proposition}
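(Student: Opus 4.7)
The plan is to invoke Theorem~\ref{sg}: since the symmetric Bernoulli measure $\PP$ on $\{1,2\}^{\NN}$ is homogeneous, one has $\mathrm{mdim}^F_M([0,1],d,\SSS)=\mathrm{mdim}_M([0,1],d,\SSS,\PP)$, and by Definition~\ref{mdimsg} this equals $\limsup_{\epsilon\to 0^+} h_\epsilon([0,1],d,\SSS,\PP)/\log(1/\epsilon)$. It therefore suffices to exhibit two continuous maps $g_1,g_2\colon [0,1]\to [0,1]$ with $h_{top}(g_i)=0$ for which the scale-$\epsilon$ entropy of the random dynamical system satisfies $h_\epsilon\sim (1/2)\log(1/\epsilon)$ as $\epsilon\to 0^+$.

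The proposed construction exploits the cut-out set $A=\{1/n\colon n\geq 1\}\cup\{0\}$, which has $\dim_B A=1/2$, together with its cut-out intervals $J_n=(1/(n+1),1/n)$ of length $|J_n|\asymp n^{-2}$. I define $g_1$ and $g_2$ as two piecewise affine maps that fix $A$ pointwise and permute the intervals $J_n$: concretely, $g_1$ sends each $J_n$ affinely into $J_{n+1}$, producing a monotone nondecreasing map whose orbits all tend to $0$; and $g_2$ sends $J_n$ to $J_{n-1}$ affinely for $n\geq 2$ with a reflecting piece on $J_1$, producing a unimodal map whose nontrivial recurrent behaviour is confined to a period-$2$ cycle. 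Both maps are then continuous and piecewise affine, and each has zero topological entropy by direct inspection of its simple recurrent structure (monotone, respectively with only period-$1$ and period-$2$ orbits).

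The metric mean dimension computation reduces to estimating the number of $(n,\epsilon)$-separated points of $[0,1]$ in the random-Bowen metrics $d_{w,n}$. At spatial scale $\epsilon$ only the cut-out intervals $J_n$ with $|J_n|\geq \epsilon$ (equivalently $n\lesssim 1/\sqrt{\epsilon}$) are distinguishable, which produces an effective alphabet of size $\asymp 1/\sqrt{\epsilon}$ matching $\dim_B A=1/2$. Under the symmetric Bernoulli measure, the random composition $g_{w_n}\cdots g_{w_1}$ drives the index of the visited interval $J_n$ as a reflected simple random walk; a careful counting of the orbits that traverse this truncated walk, combined with the affine-expansion factors of $g_2$ on each $J_n$, yields $S([0,1],d_{w,n},\epsilon)\asymp \epsilon^{-n/2}$ for $\PP$-typical $w$. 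Averaging against $\PP$ and taking $\limsup$ gives $h_\epsilon\sim (1/2)\log(1/\epsilon)$, hence $\mathrm{mdim}_M([0,1],d,\SSS,\PP)=1/2$; the matching upper bound is obtained from \eqref{fullshift} applied to the closure of the orbit coordinates, which lie in a compact set of box dimension $1/2$. Finally, since $\mathrm{mdim}^F_M>0$, the scale entropies $h_\epsilon$ must diverge as $\epsilon\to 0^+$, so $h_{top}^F(\SSS)=+\infty$.

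The delicate step is the lower bound for $S([0,1],d_{w,n},\epsilon)$: the Bernoulli word contributes only $2^n$ distinct orbits through the choice of symbols, whereas the desired count is $\epsilon^{-n/2}$, so the additional factor $(1/\sqrt{\epsilon})^n/2^n$ must be extracted from the spatial geometry of the cut-out intervals of $A$ rather than from word combinatorics alone. This requires synchronising the affine slopes of $g_1, g_2$ on the $J_n$'s with the statistics of the Bernoulli index walk so that, at scale $\epsilon$, typical random compositions realise an effective local expansion of order $1/\sqrt{\epsilon}$ through the telescoping of the slope factors $|J_{n\pm 1}|/|J_n|$; this is the main technical ingredient and the part of the argument most sensitive to the specific choice of $g_1$ and $g_2$.
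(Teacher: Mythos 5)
Your construction fails at the crucial lower-bound step, and the mechanism you invoke cannot produce exponential growth of $(n,\epsilon)$-separated sets, so the entropy at each scale $\epsilon$ is in fact zero for your candidate pair $g_1, g_2$. If $g_1$ sends each $J_n$ affinely into $J_{n+1}$ and $g_2$ sends $J_n$ affinely into $J_{n-1}$ (with a single fold on $J_1$), then for any fixed word $w$, the composition $g_{w_n}\cdots g_{w_1}$ restricted to an interval $J_{n_0}$ is a composition of monotone affine maps between intervals and is therefore injective except at the rare times when the induced index walk is reflected at $1$. In particular, there is no branching inside the phase space at each time step: once $w$ and the starting interval are fixed, the orbit is a single deterministic affine trajectory, and the total derivative along $n$ steps is a telescoping product $|J_{n_n}|/|J_{n_0}|$, which is polynomial in the range of the walk, not exponential in $n$. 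Consequently $S([0,1],d_{w,n},\epsilon)$ grows at most polynomially in $n$ for every fixed $\epsilon$ (up to a subexponential factor counting reflections at $J_1$), and $h_\epsilon([0,1],d,\SSS,\PP)=0$ for every $\epsilon>0$ and every $\PP$. Your claim $S([0,1],d_{w,n},\epsilon)\asymp\epsilon^{-n/2}$ is the heart of the matter, and the ``synchronisation of slope factors'' you defer to cannot rescue it: telescoping slope products yield a factor depending only on the endpoints of the index path, not an accumulated exponential rate. So the conclusion $\mathrm{mdim}^F_M=\frac{1}{2}$ does not follow for your pair; it would be $0$.

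The paper's construction is structurally different: one of the two generators explicitly contains (after a rescaling) the map $T$ of Example~\ref{defcaixinhas}, which has infinitely many full branches and $\mathrm{mdim}_M=1$. The compositions $g_1\circ g_2$ and $g_2\circ g_1$ reproduce that full-branch map up to an affine conjugacy, so at every two-step block the dynamics genuinely branches into infinitely many pieces of size $\asymp\epsilon$, giving the exponential count needed for a nontrivial lower bound; the factor $\tfrac12$ comes precisely from this doubling of the time step. The zero-entropy property of the individual generators then comes for free from $g_1^2\equiv0$, $g_2^2\equiv1$. For the upper bound, the paper does not use \eqref{fullshift} (which would give only the trivial bound $1$, since the alphabet is $[0,1]$); instead it invokes Tsukamoto's local formula \eqref{Tsuk} on the bilateral subshift $\Gamma_\ZZ$ and does a case analysis on the bi-infinite transition word $w(x)$, showing that local $\epsilon$-entropy is either $0$ (when two equal adjacent symbols appear, forcing a collapse) or bounded by $\tfrac12 h_{2\epsilon}([0,1],d,T)$ (when $w(x)$ is the alternating word). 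Your upper-bound sketch, ``the orbit coordinates lie in a compact set of box dimension $1/2$,'' is not substantiated: the Friedland subshift $\Gamma_\NN$ is a subshift of $[0,1]^\NN$, and there is no a priori reason it should be contained in $A^\NN$ for some set $A$ of box dimension $\tfrac12$. You would need an argument of the type the paper gives.
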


\begin{proof}

Let $T\colon[0,1]\to[0,1]$ be the map defined in Example~\ref{defcaixinhas}. We consider the maps $g_1,g_2\colon [0,1]\,\to\,[0,1]$ $$
g_1(x)\,=\,\left \{ \begin{matrix} 0 & \text{if }\,0\leq x\leq\frac{1}{2} \\ \frac{T(2x-1)}{2} &\text{if }\, \frac{1}{2}\leq x\leq1 \end{matrix} \right. \qquad\quad g_2(x)\,=\,\left \{ \begin{matrix} x+\frac{1}{2} &\text{if }\, 0\leq x\leq\frac{1}{2} \\ 1 &\text{if }\, \frac{1}{2}\leq x\leq1 \end{matrix} \right.  .
$$ 

\medskip

\noindent The transition set $\Ga$ which generates the Friedland subshift is illustrated in Figure~\ref{figure1}.

\begin{figure}[!htb]
\begin{center}
\includegraphics[scale=0.5]{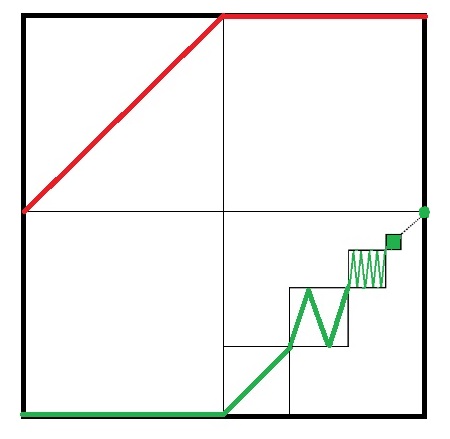}
\caption{$\Ga\,=\, \color{ForestGreen}\graph(g_1)\,\color{black}\cup\,\color{red}\graph(g_2)\color{black}$.}
\label{figure1}
\end{center}
\end{figure}

\noindent Since $g_1^2\equiv 0$ and $g_2^2\equiv1$, it is clear that their topological entropy is zero, and so both maps have zero metric mean dimension. We also observe that - denoting the Bowen metric of time $k$ with respect to a map $\phi$ by $d_k^\phi$ - we have (see Figure~\ref{figure2}) $$S_1([0,1],d^{g_1\circ g_2}_{n},\ep)\,=\,S_1([0,1],d^{g_2\circ g_1}_{n},\ep)\,=\, S_1([0,1],d^{T}_{n},2\ep)\,+\, \Big\lfloor\frac{1}{2\ep}\Big\rfloor.$$

\begin{figure}[!htb]
\begin{center}
\includegraphics[scale=0.3]{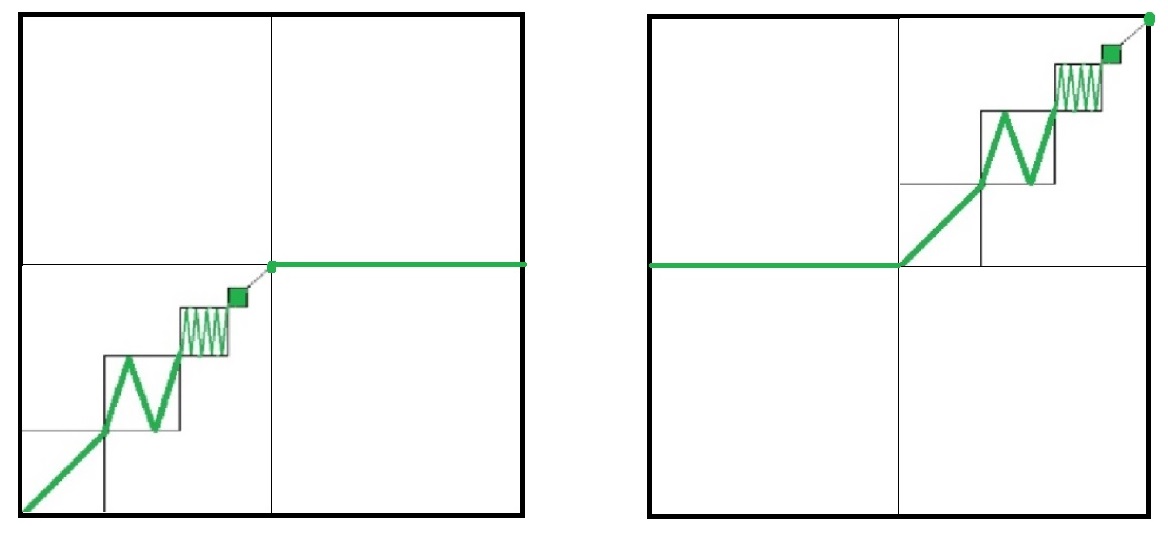}
\caption{Graphs of $g_1\circ g_2$ and $g_2\circ g_1$, respectively.}
\label{figure2}
\end{center}
\end{figure}

 In order to prove the lower bound $$\mathrm{mdim}^F_M([0,1],d,\SSS)\,\geq\,\frac{1}{2},$$ we consider the random walk $\PP\,=\,\frac{1}{2}\big( \delta_{(12)^\NN}\,+\,\delta_{(21)^\NN} \big).$ Fix $w=(12)^\NN$ (the case $w=(21)^\NN$ is analogous). Since $$d_{w,2n}(x_0,y_0)\,\geq\,d_n^{g_2\circ g_1}(x_0,y_0),\qquad\forall x_0,y_0\,\in\,[0,1],\,\,\forall n\geq1,$$ we have $$S_1([0,1],d_{w,2n},\ep)\,\geq\,S_1([0,1],d^{g_2\circ g_1}_{n},\ep)\,=\,S_1([0,1],d^{T}_{n},2\ep)\,+\,\Big\lfloor\frac{1}{2\ep}\Big\rfloor$$ and so $$h_{\ep}([0,1],d,\SSS,\PP) \,=\,\limsup_{n\,\to\,+\infty}\,\frac{1}{2n}\,\log \int_{\{0,1\}^\NN}S_1([0,1],d_{w,2n},\ep)\,d\PP(w)\,\geq \, \frac{h_{2\ep}([0,1],d,T)}{2},$$ which implies that $$\frac{1}{2}\,\leq\,\underline{\mathrm{mdim}}_M([0,1],d,\SSS,\PP)\,\leq\,\underline{\mathrm{mdim}}^F_M([0,1],d,\SSS).$$

\medskip

Now we proceed to prove the converse inequality. By \cite[Theorem 1.1]{Tsu}, for every $\delta>0$ we have 
\begin{equation}\label{Tsuk}    \overline{\mathrm{mdim}}_M(\Ga_\ZZ,d^\ZZ,\si_\ZZ)\,=\,\limsup_{\ep\,\to\,0^+}\frac{1}{\log(1/\ep)}\,\sup_{x\,\in\,\Ga_\ZZ} h_\ep(B_\delta(x,d_\ZZ^\ZZ), d^\ZZ,\si_\ZZ),
\end{equation} where $x=(x_j)_{j\in\ZZ}$ $$B_\delta(x,d_\ZZ^\ZZ)\,=\, \big\{y\,\in\,\Ga_\ZZ\,:\, d^\ZZ(\si^j(x),\si^j(y))\,\leq\,\delta,\,\,\forall j\in\ZZ\big\}\,=\,\big\{y\,\in\,\Ga_\ZZ\,:\, d(x_j,y_j)\,\leq\,\delta,\,\,\forall j\in\ZZ\big\}.$$

\smallskip

\noindent Fix $0\,<\,\delta\,<\, |J_1|/6.$ Since $\delta\, \ll \,(d\times d)(\graph(g_1),\graph(g_2))$, every $y\in B_\delta(x,d_\ZZ^\ZZ)$ must be prescribed by the same transitions as $x$, namely, there exists $w(x)\in\{0,1\}^\ZZ$ such that $y_{k+1}\,=\, g_{w(x)_k}(y_k)$ for every $k\in\ZZ$ and $y\in B_\delta(x,d_\ZZ^\ZZ)$. Let us estimate $ h_\ep(B_\delta(x,d_\ZZ^\ZZ), d^\ZZ,\si_\ZZ)$ separately for each type of $w(x)$.

\medskip 

\noindent\textbf{Case 1:} $(21)^\ZZ\ne w(x)\ne (12)^\ZZ$ 

\smallskip

\noindent Let $k_0\in\ZZ$ be such that $w(x)_{k_0}\,=\,w(x)_{k_0-1} $. Then we have $y_{k}\,=\,x_k$ for every $k\geq k_0$ and $y\in B_\delta(x,d_\ZZ^\ZZ)$. Then, \begin{equation}\label{1}
    h_\ep(B_\delta(x,d_\ZZ^\ZZ), d^\ZZ,\si_\ZZ)\,=\,0.
\end{equation}

\bigskip

\noindent\textbf{Case 2:} $w(x)= (\cdots ,1,2\,;1,2,1,2,\cdots)$

\medskip

\noindent $(a)$ $x_0\in [0,1/2 +2\delta]$. 

In this case, $\pi_0\Big(B_\delta(x,d_\ZZ^\ZZ)\Big)\subset [0,1/2 + 3\delta]$. Given $y\in B_\delta(x,d_\ZZ^\ZZ)$, we have two possibilities \begin{itemize}
    \item If $y_0\in[0,\frac{1}{2}]$, then $y_1=g_1(y_0)=0.$ This implies that $y_{2k-1}=0$ and $y_{2k}=1/2,$ for every $k\geq1.$ Then, $\pi_\NN\Big(\pi_0^{-1}[0,\,1/2]\,\cap\,B_\delta(x,d_\ZZ^\ZZ)\Big)\,=\, (0,1/2,0,1/2,...)$.
    \smallskip
    \item If $y_0\in(1/2,1/2\,+\,3\delta]$, then $y_1=g_1(y_0)=y_0 \,-\,1/2$ and $y_2=g_2(y_1)=y_0$ This implies that $y_{2k-1}=y_0\,-\,1/2$ and $y_{2k}=y_0,$ for every $k\geq1.$ Hence, $$\pi_\NN\Big(\pi_0^{-1} (1/2,1/2\,+\,3\delta]\,\cap\,B_\delta(x,d_\ZZ^\ZZ)\Big)\,\subset\, \{(y_0\,-\,1/2,y_0,y_0-1/2,y_0,...)\,:\,y_0\in (1/2,1/2\,+\,3\delta] \}.$$
\end{itemize}
Thus, $$\pi_\NN\Big(B_\delta(x,d_\ZZ^\ZZ)\Big)\,\subset\, \{(y_0\,-\,1/2,\,y_0,\,y_0\,-\,1/2,\,y_0,...)\,:\,y_0\in [1/2,\,1/2\,+\,2\delta] \},$$ which implies that \begin{equation*}
    h_\ep(B_\delta(x,d_\ZZ^\ZZ), d^\ZZ,\si_\ZZ)\,=\,0.
\end{equation*}

\medskip

\noindent $(b)$ $x_0\in [1/2 +2\delta,1]$. 

\smallskip

\noindent Take $\ell=\ell(\ep)$ such that $2^{-\ell}\leq\ep$, assuming without loss that $\ell$ is even. Given $y,z\in B_\delta(x,d_\ZZ^\ZZ)$, we have $d(y_{2k},z_{2k})\,=\,d(y_{2k-1},z_{2k-1})$ for every $k\in\ZZ$. If $d_{2n}^\ZZ(y,z)>\ep$, we get $d_{n+\ell+1}^{g_2\circ g_1}(y_{-\ell},z_{-\ell})>\ep$, since the separation between $y$ and $z$ must occur between the coordinates $ - \ell$ and $\ell+2n$ and we may restrict to even times. In particular, this implies that $$S_1(B_\delta(x,d_\ZZ^\ZZ),d_{2n}^\ZZ,\ep)\,\leq\, S_1([0,1],d_{n+\ell+1}^{g_2\circ g_1},\ep)\,=\, S_1([0,1],d_{n+\ell+1}^{T},2\ep)\,+\,\Big\lfloor\frac{1}{2\ep}\Big\rfloor,\qquad \forall n\geq1.$$ Hence $$ h_\ep(B_\delta(x,d_\ZZ^\ZZ), d^\ZZ,\si_\ZZ)\,\leq\,\frac{h_{2\ep}([0,1],d,T)}{2}.$$

\smallskip Bringing together sub-cases $(a)$ and $(b)$ we conclude that for every $x\in\Ga_\ZZ$ such that $w(x)\,=\, (\cdots ,1,2\,;1,2,1,2,\cdots)$ one has 
\begin{equation}\label{2}
     h_\ep(B_\delta(x,d_\ZZ^\ZZ), d^\ZZ,\si_\ZZ)\,\leq\,\frac{h_{2\ep}([0,1],d,T)}{2}.
\end{equation}

\medskip

\noindent\textbf{Case 3:} $w(x)= (\cdots ,1,2,1\,;2,1,2,\cdots)$

\noindent Since $\si_\ZZ$ is a homeomorphism, $\si(w(x))\,=\, w(\si_\ZZ(x))$ and $B_\delta(\si_\ZZ(x),d_\ZZ^\ZZ)\,=\,\si_\ZZ\Big(B_\delta(x,d_\ZZ^\ZZ)\Big)$, we have \begin{align}\label{3}
     h_\ep(B_\delta(x,d_\ZZ^\ZZ), d^\ZZ,\si_\ZZ)\,& \nonumber=\,  h_\ep\Big(\si_\ZZ^{-1}(B_\delta(\si_\ZZ(x),d_\ZZ^\ZZ), d^\ZZ,\si_\ZZ\Big)\\& =\,\nonumber h_\ep\Big(B_\delta(\si_\ZZ(x),d_\ZZ^\ZZ), d^\ZZ,\si_\ZZ\Big)\\& \leq\,\frac{h_{2\ep}([0,1],d,T)}{2},
\end{align}
where the inequality follows from Case 2. Combining \eqref{Tsuk}, \eqref{1},\eqref{2} and \eqref{3} we deduce that 
 $$\overline{\mathrm{mdim}}^F_M([0,1],d,\SSS)\,=\,\overline{\mathrm{mdim}}_M(\Ga_\ZZ,d^\ZZ,\si_\ZZ)\,\leq\,\frac{1}{2}.$$

\smallskip 

\noindent This finishes the proof of Proposition~\ref{mdim1/2}.
\end{proof}

    Proposition~\ref{mdim1/2} seems optimal. If two maps have zero complexity (in terms of metric mean dimension) but the associated action does not, the richness of the orbit structure can only be revealed after a composition of the two maps. This indicates that we need twice the time to see this maximal value, which is always bounded by the dimension of the phase space. This motivates the following

\begin{question}
    Let $(X,d)$ be a compact metric space and $g_1,g_2\colon X\,\to\,X$ be continuous maps with zero metric mean dimension. Is it true that $$\overline{\mathrm{mdim}}^F_M(X,d,\SSS)\,\leq\,\frac{\overline{\dim}_B(X,d)}{2}\,\,\,?$$
\end{question}

\section{Proof of Corollary~\ref{localmdim}}\label{proofcor}

Recall that, given $p\in\Ga_\ZZ$ and $q\in X$, \begin{eqnarray*}
    \cD_\Ga(p)&=&\,  \inf \,\Big\{\overline{\mathrm{mdim}}_M(\overline{U},d^\ZZ,\si_\ZZ)\colon \,U\subset\Ga_\ZZ \textrm{ is an open neighbourhood of p} \Big\} \\ \cD(q)&=&\,  \inf \,\Big\{\overline{\mathrm{mdim}}_M(\overline{U},d,T)\colon \,U\subset X \textrm{ is an open neighbourhood of }q \Big\}
\end{eqnarray*} 

Let $\mu\in\cP_{\si_\ZZ}(\Ga_\ZZ)$ be such that every $x\,=\,(x_i)_{i\in\ZZ}\,\in\,\supp(\mu)$ satisfies \begin{equation*}
    \begin{split}
        \cD_\Ga(x)\, &=\, \overline{\mathrm{mdim}}_M(\Ga_\ZZ,d^\ZZ,\si_\ZZ) 
    \end{split}
\end{equation*} which always exists by \cite[Theorem E]{CPV}. Fix one such $x.$ Then, for any $\ga>0$, we have $$\overline{\mathrm{mdim}}_M(\overline{U_\ga},d^\ZZ,\si_\ZZ)\,=\,\overline{\mathrm{mdim}}_M(\Ga_\ZZ,d^\ZZ,\si_\ZZ),$$ where $U_\ga\,=\,\{ y\,\in\,\Ga_\ZZ\, : \,y_1\,\in\,B_\ga(x_1) \}.$  Given $\ep>0$, let $\ell=\ell(\ep)$ be a positive integer satisfying $2^{-\ell}\diam(X,d)\leq\ep.$ For $n\geq1$, consider a maximal $(n,\ep)-$separated subset $E\subset U_\ga.$ Then, there exist $z^1,...,z^a\in E$ such that \begin{itemize}
    \item $d(z^{m_1}_j,z^{m_2}_j)\leq\ep$, for every $j=-\ell,...,0$ and $m_1\ne m_2$
    \item $a\,\geq\, \# E\,/\, S(X,d,\ep)^{\ell+1}.$
\end{itemize} Hence $$S_1(B_\ga(x_1),d_n,\ep)\,\geq\,\# E\,/\, S(X,d,\ep)^{\ell+1}\,=\, S_1(\overline{U_\ga},d^\ZZ_n,\ep)\,/\, S(X,d,\ep)^{\ell+1}, $$ and so  \begin{equation*}
        \cD(x_1)\, \,=\, \lim_{\ga\to0^+}\,\overline{\mathrm{mdim}}_M(\overline{B_\ga(x_1)},d,T)\,\geq\,\overline{\mathrm{mdim}}_M(\Ga_\ZZ,d^\ZZ,\si_\ZZ) \, =\, \overline{\mathrm{mdim}}_M(X,d,T),
\end{equation*}
where the last equality is due to Theorem~\ref{corodef}. On the other hand, by definition, the reverse inequality always holds. Hence, $$\cD(x_1)\,=\,\overline{\mathrm{mdim}}_M(X,d,T).$$ Now, since $\supp(\mu)\,\subset\,\Ga_\ZZ$ is $\si_\ZZ-$invariant, we may carry on the same reasoning with $x_1$ replaced by $x_{1+j}=(\si_j(x))_1$, obtaining \begin{equation}\label{Diterate}
    \cD(T^j(x_1))\,=\,\cD(x_{1+j})\,=\,\overline{\mathrm{mdim}}_M(X,d,T)\qquad \forall j\geq0.
\end{equation}
Let us consider the empirical measures along the $T-$orbit of $x_1$. For each $k\geq1$, denote $\nu_k\,=\,\frac{1}{k}\sum_{j=0}^{k-1}\delta_{T^j(x_1)}$, which by \eqref{Diterate} satisfies $\int\cD\,d\nu_k\,=\,\overline{\mathrm{mdim}}_M(X,d,T)$. Due to the upper-semicontinuity of $\cD$, any weak* limit $\nu$ of $(\nu_k)_k$ is $T-$invariant and satisfies $$\int\cD\,d\nu\,=\,\overline{\mathrm{mdim}}_M(X,d,T). $$ 

In general, the previous maximizing measure is not ergodic. Yet, as $\mu \mapsto \int \mathcal{D} d\mu$ is affine, the maximum in Corollary~\ref{localmdim} is also attained at some ergodic probability measure.

Finally, observe that given any measure $\mu \in \cP_T(X)$ satisfying
$$\overline{\mathrm{mdim}}_M(X,d,T) \,=\, \int \mathcal{D} \,d\mu,$$ since $\mathcal{D} \leq \overline{\mathrm{mdim}}_M(X,d,T)$, there exists a full measure set $Z \subset X$ such that $\mathcal{D}|_Z \equiv  \overline{\mathrm{mdim}}_M(X,d,T).$ Therefore, since $\supp(\mu) \subset \overline{Z}$ and $\mathcal{D}$ is upper semi-continuous, we deduce that $\mathcal{D}|_{\supp(\mu)} \equiv \overline{\mathrm{mdim}}_M(X,d,T).$ \qed

\medskip

\section{Spectral radius bounds for subshifts of compact type}\label{specradiussection}

In what follows, we recall from \cite{Fried} an upper bound for the $\ep-$entropy of subshifts of compact type in terms of the spectral radius of a suitable matrix. Fix a compact metric space $(X,d)$ and a subset $\Ga\,\subset\,X\times X.$

Given $\ep>0$, let $\cU(\ep)=\{ U_1,...,U_{M(\ep)} \}$ be an open $\ep-$cover of $X.$ Consider the $M(\ep)\times M(\ep)$ matrix $\Ga^\ep$ given by
\begin{equation*}\label{gammaepgeneral}
    \Ga_{i,j}^\epsilon = \left \{ \begin{matrix} 1 & \textrm{if } \big( U_i \times U_j \big) \cap \Ga \ne\emptyset \\ 0 & \textrm{otherwise.} \end{matrix} \right. 
\end{equation*} 
By counting arguments similar to the ones used for subshifts of finite type, we know that $||(\Ga^\epsilon)^k||$ many cylinders composed by elements of $\cU(\ep)$ of size $k$ are enough to cover $\Ga_\NN$, where $||\cdot||$ denotes the sum norm in the space of $M(\ep)\times M(\ep)$ matrices. This estimate yields the following equivalent formulation of \cite[Lemma 4.1]{Fried}, where $r(A)$ stands for the spectral radius of the matrix $A$.

\begin{lemma}\label{specradgeneral}
   Let $(X,d)$ be a compact metric space and $\Ga\subset X\times X$. Then $$h_\ep(\Ga_\NN,d^\NN,\sigma_\NN) \leq \log r(\Ga^\epsilon).$$
\end{lemma}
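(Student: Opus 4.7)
The plan is to mimic the classical finite-alphabet argument: view $\Ga^\ep$ as the incidence matrix of a directed graph on the vertex set $\cU(\ep)=\{U_1,\ldots,U_{M(\ep)}\}$, use powers of $\Ga^\ep$ to count admissible words (equivalently, admissible cylinders) of a given length, and then convert that combinatorial count into a cover of $\Ga_\NN$ in the Bowen metric $d^\NN_n$ of an appropriate length.

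First I would fix $n$ and a length $m\geq n$ to be determined, and consider the family of cylinders $C=C(\Ga_\NN;1,\ldots,m;U_{i_1},\ldots,U_{i_m})$ in the sense of \eqref{cylinderdef}. If $C\ne\emptyset$ then, for each $j=1,\ldots,m-1$, the intersection $(U_{i_j}\times U_{i_{j+1}})\cap \Ga$ must be non-empty, i.e.\ $\Ga^\ep_{i_j,i_{j+1}}=1$; conversely, every $(x_i)\in\Ga_\NN$ selects its own admissible tuple $(i_1,\ldots,i_m)$ via $x_j\in U_{i_j}$, so these cylinders do cover $\Ga_\NN$. Their number is therefore bounded by the number of admissible index sequences of length $m$, which equals $\sum_{i_1,i_m}(\Ga^\ep)^{m-1}_{i_1,i_m}=\|(\Ga^\ep)^{m-1}\|$.

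Next I would control the $d^\NN_n$-diameter of such a cylinder. Using the bound $d(x_j,y_j)\leq\ep$ for $j\leq m$ and $d(x_j,y_j)\leq\diam(X)$ otherwise, together with the weights in \eqref{metricshift}, a routine geometric-series calculation gives
\[
\diam(C,d^\NN_n)\,\leq\,\max\Big\{\ep,\ \frac{\diam(X)}{2^{m-n+1}}\Big\}.
\]
Choosing $L=L(\ep)$ so that $\diam(X)/2^L\leq\ep$ and setting $m=n+L$, every admissible cylinder has $d^\NN_n$-diameter at most $\ep$, whence $S(\Ga_\NN,d^\NN_n,\ep)\leq\|(\Ga^\ep)^{n+L-1}\|$. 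Since $\|\cdot\|$ is a norm on the finite-dimensional space of $M(\ep)\times M(\ep)$ matrices, Gelfand's spectral radius formula yields $\lim_{k\to\infty}\tfrac1k\log\|(\Ga^\ep)^k\|=\log r(\Ga^\ep)$; because $L$ is independent of $n$, dividing by $n$ and letting $n\to\infty$ gives the desired inequality $h_\ep(\Ga_\NN,d^\NN,\si_\NN)\leq\log r(\Ga^\ep)$.

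The main obstacle is really just the bookkeeping in the diameter estimate, where one must track which of the shifted coordinates $x_{i+j}$ (with $0\leq j\leq n-1$, $i\geq 1$) lie in the constrained range $\{1,\ldots,m\}$ and therefore enjoy the $\ep$-bound, versus which are allowed to roam in $X$ and contribute only through the exponentially small weight $2^{-(i-1)}$. This is slightly finicky but ultimately routine; the remaining pieces---the combinatorial cover produced by $\Ga^\ep$, and Gelfand's formula---are immediate.
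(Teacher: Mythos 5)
Your proof is correct and follows essentially the same route the paper takes, which it only sketches before deferring to Friedland's Lemma 4.1: cover $\Ga_\NN$ by admissible length-$m$ cylinders built from $\cU(\ep)$, bound their number by $\|(\Ga^\ep)^{m-1}\|$, verify that with $m=n+L(\ep)$ each cylinder has $d^\NN_n$-diameter at most $\ep$, and conclude via Gelfand's formula. You have simply filled in the details that the paper leaves to the reference.
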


In the case $X\,=\,[0,1]$ (and also in higher dimensions), the matrix $\Ga^\ep$ can be replaced by another one constructed by using intersections with an $\ep-$grid instead of an open cover. More precisely, given $\ep>0$ and $i,j=1,...,\lceil1/\ep \rceil$, we define \begin{equation}\label{gammaep}
    \Ga_{i,j}^\epsilon = \left \{ \begin{matrix} 1 & \textrm{if } \big( [(i-1)\ep,i\ep] \times[(j-1)\ep,j\ep] \big) \cap \Ga \ne\emptyset \\ 0 & \textrm{otherwise.} \end{matrix} \right.
\end{equation}
In the remaining of this paper the notation $\Ga^\ep$ will always refer to \eqref{gammaep}, for which we have the following reformulation of Lemma~\ref{specradgeneral}.

\begin{lemma}\label{specrad}
   Let $d$ be the Euclidean distance in $[0,1]$ and $\Ga\subset [0,1]^2$. Then $$h_\ep(\Ga_\NN,d^\NN,\sigma_\NN) \leq \log r(\Ga^\epsilon).$$
\end{lemma}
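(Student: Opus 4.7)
The plan is to adapt the argument behind Lemma~\ref{specradgeneral} to the canonical closed $\ep$-grid. For $i=1,\ldots,\lceil 1/\ep\rceil$, set $Q_i=[(i-1)\ep,i\ep]$; these cells have diameter $\ep$, cover $[0,1]$, and by \eqref{gammaep}, $\Ga^\ep_{i,j}=1$ exactly when $(Q_i\times Q_j)\cap\Ga\neq\emptyset$.

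First, I would observe that every $y=(y_j)_{j\in\NN}\in\Ga_\NN$ can be coded by an admissible path in $\Ga^\ep$: choose any $i_j$ with $y_j\in Q_{i_j}$; the inclusion $(y_j,y_{j+1})\in\Ga\cap(Q_{i_j}\times Q_{i_{j+1}})$ certifies that $\Ga^\ep_{i_j,i_{j+1}}=1$. Therefore the cylinders
$$C(i_0,\ldots,i_{N-1})\,=\,\{z\in\Ga_\NN\colon z_j\in Q_{i_j},\ 0\leq j\leq N-1\},$$
as $(i_0,\ldots,i_{N-1})$ ranges over admissible $\Ga^\ep$-paths, cover $\Ga_\NN$.

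Second, I would control the Bowen-metric diameter of these cylinders. Pick $\ell=\ell(\ep)\in\NN$ with $2^{-\ell}\leq\ep$ and look at cylinders of length $k+\ell$. Two points in $C(i_0,\ldots,i_{k+\ell-1})$ agree up to $\ep$ on the first $k+\ell$ coordinates and differ by at most $1$ on deeper ones, and the weights $2^{-|i-1|}$ in~\eqref{metricshift} damp the latter below $\ep$; a direct check yields $\diam(C(i_0,\ldots,i_{k+\ell-1}),d_k^\NN)\leq\ep$. Selecting one point from each non-empty such cylinder produces an $\ep$-spanning set for $\Ga_\NN$ in the metric $d_k^\NN$, so
$$S_2(\Ga_\NN,d_k^\NN,\ep)\,\leq\,\|(\Ga^\ep)^{k+\ell-1}\|,$$
since the sum norm counts the admissible length-$(k+\ell)$ paths in $\Ga^\ep$.

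Taking logarithms, dividing by $k$, sending $k\to\infty$ with $\ell$ fixed, and invoking Gelfand's spectral radius formula $\|(\Ga^\ep)^n\|^{1/n}\to r(\Ga^\ep)$ yields $h_\ep(\Ga_\NN,d^\NN,\si_\NN)\leq\log r(\Ga^\ep)$. The only subtle point is the buffer: the grid controls only finitely many coordinates, so without the $\ell$ extra slots the tail of $d^\NN$ would inflate the $d_k^\NN$-diameter of a cylinder beyond $\ep$; everything else is the same bookkeeping used in the proof of Lemma~\ref{specradgeneral}.
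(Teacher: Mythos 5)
Your proposal is correct and follows essentially the same route the paper intends: code admissible sequences by their grid-cell itineraries, observe that the number of length-$N$ admissible cylinders is controlled by $\|(\Ga^\ep)^{N-1}\|$, pad by a buffer of $\ell$ coordinates so that the tail of $d^\NN$ is damped below $\ep$, and conclude via Gelfand's formula. The paper itself does not spell this out (it presents Lemma~\ref{specrad} as a reformulation of Lemma~\ref{specradgeneral}, which in turn defers to Friedland), but the counting-plus-buffer argument you give is exactly what is being invoked. The only superficial deviation is that you work with the spanning number $S_2$ rather than the cover number $S$ used in \eqref{def:S}; since the grid cells are closed, this sidesteps the (harmless, and likewise glossed over in the paper) open-cover technicality, and it does not affect the metric mean dimension, which is all the lemma is used for.
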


The previous lemmas will be very useful tools to estimate the metric mean dimension when combined with the following property of complex matrices.

\begin{theorem}[Gershgorin Circle Theorem, \cite{Gers}]\label{circlethm}
    Let $A\,=\,(a_{ij})_{i,j}$ be a complex $n \times n$ matrix. Then, every eigenvalue of $A$ is contained in the union $$\bigcup_{i=1}^n B^\CC_{R_i}(a_{ii}),$$ where $B^\CC_\delta(z)\,=\,\{\tilde z \in\CC: |z-\tilde z|\leq\delta\}$ and $R_i\,=\,\sum_{j\ne i}|a_{ij}|$ for every $i=1,...,n.$ 

\end{theorem}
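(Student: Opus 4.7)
The plan is to give the standard eigenvector/extremal coordinate argument, which is the classical proof of the Gershgorin Circle Theorem and does not require any machinery beyond the triangle inequality.

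First I would take an arbitrary eigenvalue $\la \in \CC$ of $A$ together with a nonzero eigenvector $v = (v_1, \ldots, v_n)^T$, so that $Av = \la v$. The key idea is to single out the largest coordinate of $v$: pick an index $i \in \{1,\ldots,n\}$ with $|v_i| = \max_k |v_k|$. Since $v \ne 0$, we have $|v_i| > 0$, and for every $j$ the ratio $|v_j|/|v_i|$ lies in $[0,1]$.

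Next I would read off the $i$-th coordinate of the equation $Av = \la v$, which gives $\sum_{j=1}^n a_{ij} v_j = \la v_i$. Separating the diagonal term and rearranging yields
\begin{equation*}
(\la - a_{ii}) v_i \,=\, \sum_{j \ne i} a_{ij} v_j.
\end{equation*}
Applying the triangle inequality and using $|v_j| \le |v_i|$ for every $j$, I obtain
\begin{equation*}
|\la - a_{ii}|\,|v_i| \,\le\, \sum_{j \ne i} |a_{ij}|\,|v_j| \,\le\, |v_i| \sum_{j \ne i} |a_{ij}| \,=\, |v_i|\, R_i.
\end{equation*}
Dividing through by $|v_i| > 0$ gives $|\la - a_{ii}| \le R_i$, i.e.\ $\la \in B^\CC_{R_i}(a_{ii})$, which is exactly the desired conclusion.

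There is no real obstacle here; the only subtle point is choosing the coordinate of maximum modulus so that the bounds $|v_j|/|v_i| \le 1$ are valid, which is what makes the argument work uniformly in $\la$. Since the statement is cited from \cite{Gers}, one could alternatively just invoke it as a black box, but the above proof is short enough to include for completeness.
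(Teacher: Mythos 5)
Your argument is correct and is the classical proof of the Gershgorin Circle Theorem: choosing the index $i$ of maximal modulus $|v_i|$ guarantees $|v_i|>0$ and the bounds $|v_j|/|v_i|\le 1$, and the triangle inequality applied to the rearranged $i$-th component of $Av=\la v$ gives $|\la - a_{ii}|\le R_i$ directly. Note that the paper does not actually prove this statement at all—it simply cites Gerschgorin's original 1931 paper \cite{Gers} and uses the theorem as a black box—so there is no in-paper proof to compare against; your self-contained proof is a strictly stronger contribution than what the paper offers for this theorem.
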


\medskip

 In particular, if we apply Theorem~\ref{circlethm} to a $0-1$ matrix $A$ and its transpose, we get 
\begin{eqnarray}\label{eqcircle}
     &r(A)\,\,&\leq\,\,\,\, \max_{i} \# \{j\,:\,a_{ij}=1\}\nonumber \\
        r(A)\,=\,&r(A^{T})\,&\leq\,\,\,\,  \max_{j} \# \{i\,:\,a_{ij}=1\}.     
\end{eqnarray}

\smallskip

In the following example we will apply the estimates above to compute the metric mean dimension on a class of subshifts of compact type.

\medskip

\subsection{Delayed full shifts}
Let us consider a family of maps which are prescribed by a closed set and a collection of finite transitions which connect any pair of points in the set. We will use the information of the previous subsection with a slight modification: the only difference in the method is that we code different elements of the cover by the same symbol as long as they behave in a similar manner.

\begin{proposition}\label{delayformula}
Let $(X,d)$ be a compact metric space. Fix a positive integer $k\geq2$ and select distinct points $a_1,...,a_{k-1}\in X$ and a closed subset $F\subset X$.
 Consider $\Ga\subset X\times X$ given by $$\Ga=(F\times \{a_1\})\cup\{(a_1,a_2),(a_2,a_3),...,(a_{k-2},a_{k-1}) \}\cup(\{a_{k-1}\}\times F).$$ Then,
 $$\overline{\mathrm{mdim}}_M (\Ga_\NN,d^\NN,\sigma_\NN)=\frac{\overline{\dim}_B(F)}{k}\qquad\text{and}\qquad\underline{\mathrm{mdim}}_M (\Ga_\NN,d^\NN,\sigma_\NN)=\frac{\underline{\dim}_B(F)}{k}.$$
\end{proposition}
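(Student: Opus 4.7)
The plan is to prove both bounds by a direct coordinate-wise analysis of the Bowen metric on $\Ga_\NN$, exploiting the rigid periodic structure of admissible sequences.

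First, I would describe $\Ga_\NN$. Any $(x_i)_{i\in\NN} \in \Ga_\NN$ lies in one of $k$ disjoint $\sigma_\NN^k$-invariant pieces $\Ga_\NN^{(c)} = \{x \in \Ga_\NN : x_c \in F\}$ for $c = 0, 1, \ldots, k-1$. Inside $\Ga_\NN^{(c)}$ the coordinates $x_i$ with $i \not\equiv c \pmod k$ are forced (they are specific $a_j$'s), while the free coordinates at positions $i \equiv c \pmod k$ range independently over $F$. Next, setting $\ell = \ell(\ep)$ to be the least integer with $2^{-\ell}\,\diam X \le \ep$, a direct computation from \eqref{metricshift} shows that $d_n^\NN(x,y) \le \ep$ is equivalent to coordinate-wise constraints $d(x_m, y_m) \le \ep$ for $1 \le m \le n$, $d(x_0, y_0) \le 2\ep$, and $d(x_m, y_m) \le \ep \cdot 2^{m-n}$ for $m > n$ (the last constraint being trivial once $m \ge n+\ell$). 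Within a single phase, all non-free coordinates agree automatically, so $d_n^\NN$ is determined entirely by the values at the $\lfloor (n+\ell)/k \rfloor + O(1)$ effectively constrained free positions, of which $\lfloor n/k \rfloor + O(1)$ carry scale $\approx \ep$ and the rest carry coarser scales.

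From these two ingredients the bounds follow by standard spanning and separated counts. For the upper bound, taking an $(\ep/2)$-spanning set $E \subset F$ and forming products of $E$ (plus coarser spanning sets for tail free coordinates) over all effectively constrained free positions, summed over the $k$ phases, yields an $(n,\ep)$-spanning subset of $\Ga_\NN$ of cardinality at most $k \cdot S_2(F,d,\ep/2)^{n/k + O(\ell/k)}$; hence $h_\ep(\Ga_\NN,d^\NN,\sigma_\NN) \le \frac{1}{k}\log S_2(F,d,\ep/2) + o_n(1)$. For the lower bound, fix a maximal $2\ep$-separated set $E \subset F$ and, restricted to phase $c = 0$, let the free coordinates at positions $0, k, 2k, \ldots, k\lfloor (n-1)/k\rfloor$ range independently over $E$ while completing later free coordinates by any fixed element of $F$; any two such sequences differ at some free position $m \in [0, n-1]$ by at least $2\ep$ in $X$, and evaluating at $j = \max(m-1, 0) \in [0, n-1]$ gives $d^\NN(\sigma_\NN^j x, \sigma_\NN^j y) \ge \ep$, so the construction is $(n,\ep)$-separated and produces $h_\ep(\Ga_\NN,d^\NN,\sigma_\NN) \ge \frac{1}{k} \log S_1(F, d, 2\ep)$. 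Dividing by $\log(1/\ep)$, taking the $\limsup$ (resp.\ $\liminf$) as $\ep \to 0^+$, and using that $\log S_j(F,d,c\ep)/\log(1/\ep) \to \overline{\dim}_B(F,d)$ (resp.\ $\underline{\dim}_B(F,d)$) for any fixed constant $c > 0$, the two bounds combine into $\overline{\mathrm{mdim}}_M(\Ga_\NN, d^\NN, \sigma_\NN) = \overline{\dim}_B(F,d)/k$ and $\underline{\mathrm{mdim}}_M(\Ga_\NN, d^\NN, \sigma_\NN) = \underline{\dim}_B(F,d)/k$.

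The main subtlety is the bookkeeping in the coordinate-wise unpacking of the Bowen metric: one must verify that the tail free coordinates at positions $m > n$ contribute to the count only an exponent of order $O(\ell/k)$, which vanishes after normalization by $1/n$; and that the weight reduction at the boundary index $0$ (the factor $1/2$ in \eqref{metricshift}) is harmlessly compensated by passing from an $\ep$-separated set to a $2\ep$-separated set in $F$. Once this is in place, the remainder of the argument is elementary and parallels the proof of \eqref{fullshift}, with the factor $1/k$ arising precisely from the spacing of free coordinates.
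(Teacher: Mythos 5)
Your lower bound argument is sound and in fact more elementary than the paper's (which conjugates $(F^\NN,\sigma_\NN,d^{2^k})$ isometrically into $(\Ga_\NN,\sigma_\NN^k,d^2)$ and then invokes Proposition~\ref{conj}(b) for the $k$-th iterate). However, your upper bound rests on the claim that $\Ga_\NN$ decomposes into $k$ \emph{disjoint} $\sigma_\NN^k$-invariant pieces $\Ga_\NN^{(c)}$, inside each of which the coordinates $x_i$ with $i\not\equiv c\pmod k$ are \emph{forced} to be the chain points $a_j$. This fails as soon as some $a_j$ lies in $F$, a case the statement permits (and which cannot in general be removed by passing to $F\setminus\{a_1,\dots,a_{k-1}\}$, since that set is no longer closed and its closure may again contain the $a_j$). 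For example if $a_1\in F$ then $(a_1,a_1)\in F\times\{a_1\}\subset\Ga$, so a sequence may ``idle'' at $a_1$ for arbitrarily many steps before resuming the chain; such orbits belong to several of your $\Ga_\NN^{(c)}$ simultaneously, and their free positions are \emph{not} contained in any single residue class modulo $k$. Consequently the product cover you build over one arithmetic progression of coordinates does not cover $\Ga_\NN$, and the cardinality bound $k\cdot S_2(F,d,\ep/2)^{n/k+O(\ell/k)}$ is unjustified.

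The gap is genuine but repairable. One route is to notice that although short-cycling destroys the rigid phase structure, it only introduces a bounded number of binary ``routing'' decisions per step (at most one for each $j<k-1$ with $a_j\in F$), while consecutive visits to $a_{k-1}$ are still separated by at least $k$ steps; summing over routing patterns then yields an extra factor of at most $2^{n+\ell}$, which after dividing by $\log(1/\ep)$ vanishes in the limit. The paper instead avoids this bookkeeping by reducing to the case $a_j\in F$ for all $j$, setting up the $k\times k$ transition matrix $A_\ep$ whose entries record allowed moves between the $k$ cycle positions and the ``generic point of $F$'' state, and observing that $\#\alpha_{n,\ep}\le\|A_\ep^{\,n}\|\,|v_1|$. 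Since the entries of $A_\ep^{\,k}$ are affine in $N=S(F,d,\ep)$, one gets $r(A_\ep)\le\|A_\ep^{\,k}\|^{1/k}\le(CN)^{1/k}$ and hence $h_\ep(\Ga_\NN,d^\NN,\sigma_\NN)\le\frac{1}{k}\bigl(\log C+\log S(F,d,\ep)\bigr)$, with no case distinction. Either fix works; your write-up, as stated, only proves the result under the unstated hypothesis $a_j\notin F$ for all $j$.
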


\begin{proof} The computations below do not depend on whether we take upper or lower limits in $\ep$, so we do not distinguish them by the notation. For a given $\ro>1$, we will denote the metric defined in \eqref{metricshift} by $d^\ro$.

Since the map $h\colon(F^\NN,d^{2^k})\to (\Ga_\NN,d^2)$ defined as $$h(x_1,x_2,...)=(x_1,a_1,a_2,...,a_{k-1},x_2,a_1,a_2,...,a_{k-1},x_3,...)$$ is an isometry over its image and $h\circ\sigma_\NN=\sigma_\NN^k \circ h$, by Proposition~\ref{conj} and \eqref{fullshift} we have
\begin{alignat*}{3}
 \dim_B(F,d)\,&=\,\mathrm{mdim}_M (F^\NN,\sigma_\NN,d^{2^k})\,&&=\,\mathrm{mdim}_M (h(F^\NN),\sigma_\NN^k,d^{2}) \\ &\leq\,\mathrm{mdim}_M (\Ga_\NN,\sigma_\NN^k,d^{2}) \,&&\leq\,k\,\mathrm{mdim}_M (\Ga_\NN,\sigma_\NN,d^{2}).     
\end{alignat*}

For the converse inequality, we may assume without loss of generality that $a_j\in F$ for every $j=1,...,k-1$. Otherwise, the set $\Ga$ would be contained in $\Ga'$ corresponding to $F'= F\cup\{a_1,...,a_{k-1}\}$, whose upper and lower box dimension are equal to the ones of $F.$

Given $\ep>0$ we consider a minimal open $\ep-$cover $U_1,...,U_N$ of $F$, that is, $N=S(F,d,\ep)$. We can assume that $a_j \in U_j$ uniquely. For each $n\geq1$, consider the following open cover  of $\Ga_\NN$ by cylinders (see \eqref{cylinderdef}) $$\alpha_{n,\ep} \,  =\, \big\{ C(\Ga_\NN\,;\,1,\cdots,n\,;\,U_{i_1},\cdots, U_{i_n})\,:\,1\leq i_1,...,i_{n}\leq N \,\big\} $$  
In order to estimate the cardinality of $\alpha_{n,\ep}$, we partition its elements in the following way: for each $j=1,...,k-1$, let
\begin{align*}
    x_n^0 \,&= \,\#\big\{C(\Ga_\NN\,;\,1,\cdots,n\,;\,U_{i_1},\cdots, U_{i_n})\,\in\,\alpha_{n,\ep}\, :\, i_n\geq k\big\} \\ x_n^j\, &= \,\#\big\{C(\Ga_\NN\,;\,1,\cdots,n\,;\,U_{i_1},\cdots, U_{i_n})\,\in\,\alpha_{n,\ep}\, :\, i_n=j\big\}.
\end{align*}
Note that

$$ 
\left\{ \begin{array}{lll}
     x_{n+1}^0\,&=\,\,\,\,x_n^{k-1}(N-k+1)& \\ x_{n+1}^1\,&=\,\,\,\,x_n^0 + ...+x_n^{k-1}& \\ x_{n+1}^j\,&=\,\,\,\,x_{n}^{j-1} +x_n^{k-1}\,\,&\,\forall\,j\in\{2,...,k-1\},
\end{array} \right.
$$
or equivalently,
\begin{equation*}
   \begin{pmatrix} x_{n+1}^0\\x_{n+1}^1\\ \vdots \\x_{n+1}^{k-1} \\ \end{pmatrix} 
    = \begin{pmatrix} 0&0&0&0& \cdots & 0 &N-k+1
    \\1&1&1&1&\cdots&1&1
    \\0&1&0&0&\cdots&0&1 
    \\0&0&1&0&\cdots&0&1
    \\0&0&0&1&\cdots&0&1
    \\\vdots&\vdots&\vdots&0&\ddots&0&1
    \\0&0&0&0&0&1&1
    \\ \end{pmatrix}
    \begin{pmatrix} x_{n}^0\\x_{n}^1\\ \vdots \\x_{n}^{k-1} \\ \end{pmatrix} 
\end{equation*}
We denote the above equality by $v_{n+1}=Av_n$, where $A=A_\ep$ and $v_1=(N-k+1,1,1,...,1)$. Then, $$\#\alpha_{n,\ep}\,=\,x_{n+1}^1 \,\leq\, |v_{n+1}| \,\leq \, ||A^n||\, |v_1|. $$ Let $\ell=\ell(\ep)\geq1$ be a positive integer satisfying $2^{-\ell}\,\diam(X,d)\leq\ep$. Then $\diam(\alpha_{n+\ell(\ep),\ep},d^2_n)\leq\ep$ and we get \begin{equation}\label{delay1}
    h_\ep(\Ga_\NN,d^2,\si_\NN) \,\leq\, \lim_{n\,\to\,+\infty} \frac{1}{n} \log \# \alpha_{n+\ell(\ep),\ep} \,=\, \lim_{n\,\to\,+\infty} \frac{1}{n} \log |x_{\ell+n+1}^1| \,\leq\, \log r(A_\ep).
\end{equation}
On the other hand, it is easy to see that the entries of $A^k$ are polynomials in $N$ of degree at most $1$. Consequently, there exists $C\geq1$ independent of $N$ satisfying $||A^k||\,\leq\, CN$. Combining this information with \eqref{delay1} we obtain $$ h_\ep(\Ga_\NN,d^2,\si_\NN) \,\leq\, \log r(A_\ep)\,\leq\, \log ||A_\ep^k||^{1/k}\,\leq\, \frac{1}{k} \Big(\log C\,+\, \log S(F,d,\ep)\Big).$$ We finish the proof by dividing by $\log(1/\ep)$ and making $\ep\to0^+$.
\end{proof}
\smallskip

\section{Proof of Theorem~\ref{gausslikethm}}\label{proofC}

In this section we fix a compact interval $I$, a compact subset $A\subset I$ with zero Lebesgue measure and the associated family of cut-out sets $(J_k)_{k\geq1}$, ordered non-increasingly in diameter. For each $k\geq1$, let $\ep_k=|J_k|$ and denote the Euclidean distance in $I$ by $d$. From now on we will assume that $\ep_k>0$ for every $k\geq1.$ If, otherwise, there is $k_0$ such that $J_{k_0}=\emptyset$, then $T_A$ has finitely many branches and so, by \eqref{entropy}, its entropy is finite; therefore its metric mean dimension is zero, trivially satisfying \eqref{eqgausslike}.

For the sets $A$ under consideration, the following dimension estimates via cut-out sets are provided by \cite[Propositions~3.6 and 3.7]{Fa2}.

\begin{lemma}\label{falcestimates}
     Let $A\subset I$ and $(\ep_k)_{k\geq1}$ be as above. Then $$\liminf_{k\,\to\,+\infty}\frac{\log k}{\log(1/\ep_k)}\,\leq\, \underline{\mathrm{dim}}_B(A,d)  \,\leq\, \overline{\mathrm{dim}}_B(A,d)\,=\,\limsup_{k\,\to\,+\infty}\frac{\log k}{\log(1/\ep_k)}.$$ Moreover, the box dimension of $A$ exists if, and only if, the outer limits coincide.
\end{lemma}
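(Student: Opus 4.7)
The plan is to establish the two outer inequalities via independent separation and covering arguments, then to deduce the ``moreover'' clause from the resulting sandwich.

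For the two lower bounds $\liminf_k\log k/\log(1/\ep_k) \leq \underline{\dim}_B(A,d)$ and $\limsup_k\log k/\log(1/\ep_k) \leq \overline{\dim}_B(A,d)$, I would exhibit a large separated subset of $A$. Set $k(\ep) := \#\{j \geq 1 : \ep_j \geq \ep\}$, and consider the left endpoints of $J_1, \ldots, J_{k(\ep)}$, sorted in increasing order as $p_1 < \cdots < p_{k(\ep)}$. These points all lie in $A$, being boundary points of open complementary intervals. The disjointness of the $J_j$'s forces $p_{i+1}$ to lie at or beyond the right endpoint of the cut-out starting at $p_i$, giving $p_{i+1} - p_i \geq \ep$; hence $S_1(A,d,\ep) \geq k(\ep)$. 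Since $\ep \mapsto k(\ep)$ equals $k$ on each interval $(\ep_{k+1}, \ep_k]$, taking $\liminf$ and $\limsup$ as $\ep \to 0^+$ converts directly into the corresponding limits of $\log k/\log(1/\ep_k)$, yielding both lower bounds.

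For the matching upper bound $\overline{\dim}_B(A,d) \leq \limsup_k\log k/\log(1/\ep_k)$, fix $s > \limsup_k\log k/\log(1/\ep_k)$; if $s \geq 1$ the bound is trivial since $A \subset \RR$, so assume $s < 1$. Then $\ep_k \leq k^{-1/s}$ for all $k \geq K_0$, and a comparison with $\int_M^\infty x^{-1/s}\,dx$ yields $\sum_{k > M}\ep_k \leq C M^{1-1/s}$. For small $\delta > 0$, set $M = \lceil \delta^{-s}\rceil$, so $\ep_M \leq \delta$ and every cut-out of length exceeding $\delta$ lies among $J_1, \ldots, J_M$. Removing these $M$ intervals from $I$ leaves $M+1$ closed sub-intervals covering $A$, with total length $\sum_{k > M}\ep_k \leq C\delta^{1-s}$. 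Covering each sub-interval individually by intervals of length $\delta$ gives
$$S(A,d,\delta)\,\leq\,(M+1)\,+\,\frac{1}{\delta}\sum_{k > M}\ep_k\,\leq\,C'\delta^{-s},$$
so $\overline{\dim}_B(A,d)\leq s$; letting $s$ approach the limsup closes the inequality.

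Finally, the sandwich above yields immediately that coincidence of the outer limits forces $\underline{\dim}_B(A,d) = \overline{\dim}_B(A,d)$, so $\dim_B(A,d)$ exists. The converse direction requires strengthening the separated-set argument to an equality $\underline{\dim}_B(A,d) = \liminf_k\log k/\log(1/\ep_k)$, which one obtains by applying the covering estimate above at scales $\delta_n$ for which $k(\delta_n) = k_n$, where $(k_n)_n$ is a subsequence realizing the liminf. The main technical point lies exactly here: controlling the tail $\sum_{k > k_n}\ep_k/\delta_n$ along this subsequence is more delicate than the uniform bound used for $\overline{\dim}_B(A,d)$, and relies on exploiting both the monotonicity of $(\ep_k)_k$ and the absolute summability $\sum_k \ep_k = |I| < \infty$.
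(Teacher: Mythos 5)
You are not really deviating from the paper here, because the paper does not prove this lemma at all: it simply quotes \cite[Propositions 3.6 and 3.7]{Fa2}. So what you are doing is reconstructing Falconer's argument, and for the displayed chain of inequalities your reconstruction is correct. The left endpoints of the cut-outs of length at least $\ep$ form an $\ep$-separated subset of $A$ of cardinality $k(\ep)$ (disjointness plus openness does force $p_{i+1}\geq p_i+\ep$), and since $k(\ep)=k$ on $(\ep_{k+1},\ep_k]$ this yields both $\liminf_k\frac{\log k}{\log(1/\ep_k)}\leq\underline{\dim}_B(A,d)$ and $\limsup_k\frac{\log k}{\log(1/\ep_k)}\leq\overline{\dim}_B(A,d)$. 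Your covering argument (discard the $M=\lceil\delta^{-s}\rceil$ longest cut-outs, cover the remaining $M+1$ intervals, of total length $\sum_{k>M}\ep_k\leq CM^{1-1/s}\leq C\delta^{1-s}$, by $\delta$-intervals) correctly gives $\overline{\dim}_B(A,d)\leq s$ for every $s$ exceeding the limsup, and the ``if'' half of the final clause is indeed immediate from the resulting sandwich.

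The genuine gap is in the ``only if'' half. You propose to deduce it from the equality $\underline{\dim}_B(A,d)=\liminf_k\frac{\log k}{\log(1/\ep_k)}$, to be proved by controlling the tail $\delta_n^{-1}\sum_{k>k_n}\ep_k$ along a liminf-realizing subsequence; but that equality is false in general, so no such control is possible. Take $K_i=2^{2^{i-1}}$ and $\ep_k=2^{-2^i}$ for $K_{i-1}<k\leq K_i$; this is non-increasing and summable (since $\sum_i K_i 2^{-2^i}=\sum_i 2^{-2^{i-1}}<\infty$) and is realized as the cut-out lengths of a compact null set by laying the intervals side by side in decreasing order of length. Then $\liminf_k\frac{\log k}{\log(1/\ep_k)}=\frac14$ and $\limsup_k\frac{\log k}{\log(1/\ep_k)}=\frac12$, whereas the two-sided estimate $S(A,d,\delta)\asymp k(\delta)+\delta^{-1}\sum_{j>k(\delta)}\ep_j$ (your covering bound, together with the matching lower bound coming from $S\delta+\sum_{j\leq S+1}\ep_j\geq|I|$ and the $k(\delta)$ separated endpoints) gives $\underline{\dim}_B(A,d)=\frac13$: writing $\delta=2^{-t}$ with $2^i<t\leq2^{i+1}$ one finds $S(A,d,\delta)\asymp2^{2^{i-1}}+2^{t-2^i}$, and the exponent $\max\bigl(2^{i-1}/t,\,1-2^i/t\bigr)$ is minimized at $t=3\cdot2^{i-1}$, with value $\frac13$. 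In particular, at the scales $\delta_n\approx\ep_{k_n}$ you single out, the tail term is of order $\delta_n^{-1/2}$ and genuinely dominates the covering number; the lower box dimension of a cut-out set is governed by the tail sums $\sum_{j>k}\ep_j$ and not by the pointwise comparison of $k$ with $\ep_k$, and only the inequality $\liminf_k\frac{\log k}{\log(1/\ep_k)}\leq\underline{\dim}_B(A,d)$ (which can be strict) holds. The converse implication of the lemma is nevertheless true, but it is exactly the nontrivial half of Falconer's Proposition 3.7 and requires a different argument, working with the quantity $k(\delta)+\delta^{-1}\sum_{j>k(\delta)}\ep_j$ at intermediate scales inside a long plateau of $\delta\mapsto k(\delta)$ to show that $\liminf<\limsup$ forces $\underline{\dim}_B(A,d)<\overline{\dim}_B(A,d)$; it cannot be reduced to the identity you assert. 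As written, your proposal proves the displayed inequalities and the ``if'' direction, but not the ``only if'' direction.
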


Theorem~\ref{gausslikethm} is a consequence of the previous lemma together with the following inequalities, which are the content of Propositions~\ref{lowerboundgauss} and \ref{upperboundgauss}:
\begin{eqnarray*}
    \limsup_{k\,\to\,+\infty}\frac{\log k}{\log(1/\ep_k)}&\leq\,\,\,\, \overline{\mathrm{mdim}}_M(I,d,T_A)&\leq\,\,\,\,\overline{\mathrm{dim}}_B(A,d) \\ \liminf_{k\,\to\,+\infty}\frac{\log k}{\log(1/\ep_k)}&\leq\,\,\, \, \underline{\mathrm{mdim}}_M(I,d,T_A)&\leq\,\,\,\,\underline{\mathrm{dim}}_B(A,d).
\end{eqnarray*}

\smallskip

To prove the following proposition, the differentiability assumption (C3) in the statement of the theorem is not needed.

\begin{proposition}\label{lowerboundgauss}
    Let $T_A\colon I\to I$ be a map satisfying conditions \emph{(C1)} and \emph{(C2)}. Then $$            \limsup_{k\,\to\,+\infty}\frac{\log k}{\log(1/\ep_k)}\,\leq\, \overline{\mathrm{mdim}}_M(I,d,T_A) \quad\text{and}\quad\liminf_{k\,\to\,+\infty}\frac{\log k}{\log(1/\ep_k)}\,\leq\, \underline{\mathrm{mdim}}_M(I,d,T_A)).$$
\end{proposition}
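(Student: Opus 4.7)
The strategy is to invoke Definition~\ref{discontdef}, which reduces the problem to constructing large $(n,\ep)$-separated subsets of $\overline{\Ga_\NN}$ for the shift dynamics, where $\Ga = \graph(T_A)$. Orbits will be coded by the sequence of branches $J_k$ they visit, and the alphabet will be restricted to a family of branches whose associated intervals are pairwise $\ep$-separated. The natural counting function that emerges is $k(\ep) = \max\{j : \ep_j > 2\ep\}$, which tends to $+\infty$ as $\ep \to 0^+$ because the sequence $(\ep_j)_j$ is non-increasing and converges to $0$.

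Fix $\ep > 0$ and list $J_1, \dots, J_{k(\ep)}$ in positional order on $\RR$. Selecting every other interval produces $S(\ep) \subset \{1, \dots, k(\ep)\}$ of cardinality $|S(\ep)| \geq \lceil k(\ep)/2 \rceil$. Any two selected intervals are separated on the real line by at least one discarded $J_\ell$ with $\ep_\ell > 2\ep$, so $d(J_i, J_j) > 2\ep$ for every distinct $i, j \in S(\ep)$.

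For each $n \geq 1$ and each word $\om = (\om_1, \dots, \om_n) \in S(\ep)^n$, conditions (C1) and (C2) together yield a point $x_\om \in J_{\om_1}$ with $T_A^{j-1}(x_\om) \in J_{\om_j}$ for every $j = 1, \dots, n$; indeed, if $\phi_i = (T_A|_{J_i})^{-1}\colon \overset{\,\circ}{I} \to J_i$ denotes the inverse branch and $y \in J_{\om_n}$ is chosen arbitrarily, one may take $x_\om = \phi_{\om_1} \circ \dots \circ \phi_{\om_{n-1}}(y)$ (the derivative hypothesis (C3) is not used here). Associate to each $x_\om$ the forward orbit $\bar{x}_\om = (x_\om, T_A(x_\om), T_A^2(x_\om), \dots) \in \Ga_\NN$. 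For distinct $\om, \om' \in S(\ep)^n$, letting $j^*$ be the first coordinate at which they disagree, the iterates $T_A^{j^*-1}(x_\om)$ and $T_A^{j^*-1}(x_{\om'})$ lie in distinct members of the well-separated family $\{J_i\}_{i \in S(\ep)}$ and hence satisfy $d(T_A^{j^*-1}(x_\om), T_A^{j^*-1}(x_{\om'})) > 2\ep$. A direct inspection of the shift metric $d^\NN_n$ then shows that single-coordinate separation by $2\ep$ forces $d^\NN_n(\bar{x}_\om, \bar{x}_{\om'}) \geq \ep$ (at most a factor $\ro = 2$ is lost, and only in the edge case $j^* = 1$). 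Thus $\{\bar{x}_\om : \om \in S(\ep)^n\}$ is an $(n,\ep)$-separated subset of $\overline{\Ga_\NN}$ of cardinality at least $(k(\ep)/2)^n$, giving the entropy bound $h_\ep(\overline{\Ga_\NN}, d^\NN, \si_\NN) \geq \log(k(\ep)/2)$.

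It remains to divide by $\log(1/\ep)$ and pass to limits, invoking Definition~\ref{discontdef} to interpret the metric mean dimension of $(I,d,T_A)$ as the metric mean dimension of $(\overline{\Ga_\NN}, d^\NN, \si_\NN)$. For the $\overline{\mathrm{mdim}}_M$ inequality, testing along the sub-sequence $\ep = \ep_k/4$ gives $k(\ep) \geq k$ and $\log(1/\ep) = \log 4 + \log(1/\ep_k)$, so the additive constants are absorbed as $k \to +\infty$ and one recovers $\limsup_{k \to +\infty} \log k / \log(1/\ep_k)$. For the $\underline{\mathrm{mdim}}_M$ inequality, on the plateau $\ep \in [\ep_{k+1}/2, \ep_k/2)$ on which $k(\ep) = k$ the bound $(\log k - \log 2)/\log(1/\ep)$ attains its minimum at $\ep = \ep_{k+1}/2$; taking $\liminf$ and re-indexing yields $\liminf_{k \to +\infty} \log k / \log(1/\ep_k)$. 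The main obstacle of the argument is the combinatorial-geometric selection of $S(\ep)$ with guaranteed pairwise $\ep$-separation (Paragraph~2); once this is secured, the coding construction and the final limit passage are routine.
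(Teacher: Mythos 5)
Your proposal is correct and follows essentially the same route as the paper's proof: choose alternating cut-out intervals among the longest ones to obtain a pairwise $\ep$-separated alphabet, use (C1)--(C2) to realize every itinerary and produce roughly $(k/2)^n$ many $(n,\ep)$-separated orbits in $\Ga_\NN$, then compare $\ep$ with the sequence $(\ep_k)$ to pass to the $\limsup$/$\liminf$. (Your parenthetical about losing a factor $\ro=2$ is unnecessary --- after shifting by $j^*-1$ the differing coordinate carries weight $\ro^{0}=1$ --- but this does not affect the argument.)
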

\begin{proof}
 By Definition~\ref{discontdef} of metric mean dimension, we must consider $\Ga=\graph(T_A)$ and the subset $\Ga_\NN\subset I^\NN$ endowed with the metric $d^\NN.$

    Given $k\geq1$, each bijectivity domain $J_1,J_2,...,J_{2k}$ have length at least $\ep_{2k}.$ Let $i_1,...,i_{2k}$ be a reordering of $\{1,...,2k\}$ so that $J_{i_1},...,\,J_{i_{2k}}$ are ordered from left to right. Then the sets $J_{i_1},J_{i_3},...,J_{i_{2k-1}}$ are pairwise separated by at least $\ep_{2k}.$ Since condition (C2) ensures that each cylinder composed by $J_{i_1},J_{i_3},...,J_{i_{2k-1}}$ is nonempty, for every $n\geq1$ $$S_1(\Ga_\NN,d^\NN_n ,\ep_{2k})\,\geq\,k^n.$$ This implies that $$h_{\ep_{2k+1}}(\Ga_\NN,d^\NN,\si_\NN)\,\geq\,h_{\ep_{2k}}(\Ga_\NN,d^\NN,\si_\NN)\,\geq\,\log k\qquad\forall k\geq1.$$ Thus, $$h_{\ep_{m}}(\Ga_\NN,d^\NN,\si_\NN)\,\geq\,\log \big(\frac{m}{2}-1\big)\qquad\forall m\geq1.$$ Given $\ep>0$, take $m=m(\ep)$ satisfying $\ep_{m+1}<\ep\leq\ep_m.$ Thus, $$\frac{h_\ep(\Ga_\NN,d^\NN,\si_\NN)}{\log(1/\ep)}\,\geq\,\frac{h_{\ep_m}(\Ga_\NN,d^\NN,\si_\NN)}{\log(1/\ep_{m+1})}\,\geq\, \frac{\log(m+1)}{\log(1/\ep_{m+1})}\,\frac{\log(\frac{m}{2}-1)}{\log(m+1)}.$$ We finish the proof by making $\ep\to0^+$ (and consequently $m\to+\infty$).
\end{proof}

\smallskip

\begin{proposition}\label{upperboundgauss}
     Let $T_A\colon I\to I$ be a map satisfying conditions \emph{(C1)-(C3)}. Then \begin{eqnarray*}
         \overline{\mathrm{mdim}}_M(I,d,T_A)&\leq& \overline{\mathrm{dim}}_B(A,d)\\\underline{\mathrm{mdim}}_M(I,d,T_A)&\leq& \underline{\mathrm{dim}}_B(A,d).
     \end{eqnarray*}
\end{proposition}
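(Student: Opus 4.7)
The plan is to apply the spectral-radius machinery from Section~\ref{specradiussection} to the grid matrix $\Ga^\ep$ from \eqref{gammaep}, where $\Ga = \graph(T_A)$. By Definition~\ref{discontdef} together with Remark~\ref{mdimclosure},
\[
\mathrm{mdim}_M(I,d,T_A) \,=\, \mathrm{mdim}_M(\Ga_\NN,d^\NN,\si_\NN),
\]
so Lemma~\ref{specrad} reduces the task to establishing $\log r(\Ga^\ep) \leq \log S(A,d,\ep) + O(1)$. Note that the row-sum form of Gershgorin is useless here: whenever a cell $[(i-1)\ep,i\ep]$ meets an accumulation point of $\bigcup_k J_k$, it hosts arbitrarily small $J_k$'s, each mapping onto $\overset{\,\circ}{I}$ by (C2), so the entire row $i$ of $\Ga^\ep$ consists of ones. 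I will therefore apply Gershgorin to the transpose (see \eqref{eqcircle}) and estimate column sums.

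Fix a column $j$ and partition the grid cells into two classes: (A) those entirely contained in some cut-out interval $J_k$, and (B) those that intersect $A$. The class (B) contribution to column $j$ is at most the number of class (B) cells, which is comparable to $N_\ep := S(A,d,\ep)$ up to an absolute constant. For class (A), condition (C3) is essential: on each $J_k$, the monotonicity of $T_A|_{J_k}$ combined with $|T_A'|_{J_k}| \geq \eta$ forces $(T_A|_{J_k})^{-1}([(j-1)\ep,j\ep])$ to be an interval of length at most $\ep/\eta$, hence to meet at most $\lceil 1/\eta \rceil + 1$ grid cells. Since a class (A) cell can only sit inside a $J_k$ of length at least $\ep$, the total class (A) contribution to column $j$ is at most $N'_\ep(\lceil 1/\eta \rceil + 1)$, where $N'_\ep := \#\{k : |J_k|\geq \ep\}$.

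The decisive geometric step is the bound $N'_\ep \leq S_1(A,d,\ep)$. Ordering the left endpoints of the $J_k$'s with $|J_k| \geq \ep$ as $L_1 < L_2 < \cdots < L_{N'_\ep}$, one has $L_{i+1} - L_i \geq \ep$ because the next left endpoint lies outside the cut-out interval starting at $L_i$ (whose length is $\geq \ep$), so $\{L_i\}$ is an $\ep$-separated subset of $A$, and $N'_\ep \leq S_1(A,d,\ep)$, which is comparable to $N_\ep$. Putting everything together, $r(\Ga^\ep) \leq C \cdot N_\ep$ for some constant depending only on $\eta$, whence
\[
h_\ep(\Ga_\NN,d^\NN,\si_\NN) \,\leq\, \log N_\ep + O(1).
\]
Dividing by $\log(1/\ep)$ and passing to $\limsup$ (resp.\ $\liminf$) as $\ep \to 0^+$ yields both desired inequalities, since $\overline{\dim}_B(A,d) = \limsup_\ep \log N_\ep/\log(1/\ep)$ and similarly for $\underline{\dim}_B$. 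The main obstacle is the decomposition of rows together with the bound $N'_\ep \leq S_1(A,d,\ep)$, which ties the count of large cut-out intervals to the covering number of $A$; once these are in place the argument is a mechanical application of the tools in Section~\ref{specradiussection}.
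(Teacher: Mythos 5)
Your proof is correct and follows the same overall strategy as the paper (Lemma~\ref{specrad} combined with the column-sum form of Gershgorin from \eqref{eqcircle}, using (C3) to control the per-branch contribution to each column), but the bookkeeping that reduces the column count to $S(A,d,\ep)$ is genuinely different. You partition the grid cells of row index $i$ into those entirely inside some cut-out interval $J_k$ (class A) and those meeting $A$ (class B), bound the class-A contribution per column by $(\lceil 1/\eta\rceil+1)\cdot N'_\ep$ with $N'_\ep=\#\{k:|J_k|\geq\ep\}$, and then observe that the left endpoints of the $J_k$ with $|J_k|\geq\ep$ form an $\ep$-separated subset of $A$, giving $N'_\ep\leq S_1(A,d,\ep)$. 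The paper instead fixes a point $x_j$ in the $j$-th strip, bounds the column count by a constant multiple of the $\ep$-covering number of the fiber $T_A^{-1}(x_j)=\{a_k(j)\}_k$ (a set meeting each $J_k$ exactly once), and then invokes Lemma~\ref{inbetween} to compare $S(\{a_k\},d,\ep)$ with $S(A,d,\ep)$. Your class-A/class-B split has the advantage of handling explicitly the cells meeting $A$ (which the paper's fiber argument treats somewhat implicitly) and of replacing Lemma~\ref{inbetween} by the cleaner observation that large cut-out intervals have $\ep$-separated left endpoints; the paper's fiber approach is slightly more compact once Lemma~\ref{inbetween} is in hand. Both yield the same $O(S(A,d,\ep))$ column bound and hence the stated inequalities after dividing by $\log(1/\ep)$.
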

\begin{proof}
 Assume for simplicity that $I=[0,1]$ and fix $\ep>0$. By Lemma~\ref{specrad} and \eqref{eqcircle} we have
 \begin{align}\label{111}
      h_\ep(\Ga_\NN,d^\NN,\si_\NN)\,\, \leq\, \, \log r(\Ga^\ep)  \,\,\leq\,\,\log \max_{1\leq j\leq\lceil 1/\ep\rceil} \#\{ i:\Ga^\ep_{ij}=1\},
 \end{align}
where $\Ga^\ep$ is defined as in \eqref{gammaep}. For each $j=1,...,\lceil 1/\ep\rceil$ fix any point $x_j\in\big(\ep(j-1),\ep j\big)\cap[0,1]$ and observe that $T_A^{-1}(x_j)\,=\,\{a_k(j)\}_{k}$ is a sequence that intersects each $J_k$ exactly once, by conditions (C1) and (C2).

Let $M$ be a positive integer such that $M^{-1}<\eta$, where $\eta$ is as in (C3). The graph of $T_A|_{J_k}$ can never intersect more than $M+2$ boxes in the $\ep-$grid per line. Otherwise, by the Mean Value Theorem there would exist some point $z$ in the interior of $J_k$ such that $|T'(z)|<M^{-1}.$ Hence, for every $j=1,...,\lceil 1/\ep\rceil$ we have
\begin{align}\label{222}
    \#\{ i:\Ga^\ep_{ij}=1\}\,&\nonumber=\,\#\{ i: \big( [(i-1)\ep,i\ep] \times[(j-1)\ep,j\ep] \big) \cap \Ga \ne\emptyset\}  \\&\nonumber\leq\, 2(M+2) \,\#\{i: [\ep(i-1),\ep i]\cap T_A^{-1}(x_j) \ne\emptyset\} \\&\nonumber \leq\, 4(M+2)\, S(T_A^{-1}(x_j),d,\ep) \\& \leq\, 8(M+2)\, S(A,d,\ep),
\end{align}
\smallskip
where the second inequality is due to the fact that every open interval of diameter $\ep$ can intersect at most $2$ intervals of the form $[(i-1)\ep,i\ep]$, and the last inequality is a consequence of the next lemma.

\begin{lemma}\label{inbetween}
    Let $(a_k)_{k\geq1}$ be a sequence of positive real numbers such that $a_k\in J_k$, for every $k\geq1$. Then for every $\ep>0$, $$S(\{a_k\colon k\geq1\}, d,\ep)\,\leq\,2\,S(A,d,\ep).$$
\end{lemma}
\begin{proof}
    Given a minimal collection of intervals of diameter at most $\ep$ that covers $A$, there can be at most one element of $\{a_k\}_k$ in between each pair of consecutive intervals not yet covered. Adding one extra interval for each of these (at most $S(A, d,\ep)$) points, we get an $\ep-$cover of $\{a_k\}_k$ with cardinality $2S(A, d,\ep)$.
\end{proof}

Combining \eqref{111} and \eqref{222}, we get
$$h_\ep(\Ga_\NN,d^\NN,\si_\NN)\,\leq\,\log \big(8(M+2)\big)\,+\, \log S(A,d,\ep), $$
and we end the proof of Proposition~\ref{upperboundgauss} after dividing by $\log(1/\ep)$ and making $\ep\,\to\,0^+.$ \end{proof}

\subsection{Large derivative assumption}\label{C3section}  Condition (C3) was used in the previous reasoning to obtain a uniform upper bound for the number of entries equal to $1$ in every row of $\Ga^\ep$ in terms of the $\ep-$covering number of $A$. However, it is not strictly necessary as we will illustrate.

\begin{example}
 Let us compute the metric mean dimension, with respect to the Euclidean distance $d$, of the following (discontinuous) interval map 
$$
\begin{array}{rccc}
T \colon & [-1,1] & \longrightarrow & [-1,1] \\
& x\ne0  & \mapsto & \sin \frac{1}{|x|} \\ & 0 & \mapsto & 0
\end{array}.
$$ We start by observing that the map $T$ does not satisfy either of the conditions (C2) (since $T$ restricted to the left and rightmost intervals is not full branch) and (C3) (since there are infinitely many differentiable maxima and minima). In order to show that (C3) is not strictly necessary for Theorem~\ref{gausslikethm} to be valid, we must consider a map satisfying (C2). After computing the metric mean dimension of $T$, we will consider a slightly modified map satisfying (C2) and such that all the estimates still hold.

The map $T$ can be seen as $T_{A}$ satisfying condition (C1) and, up to the left and rightmost intervals, also (C2), where $A\,=\,\big\{ \frac{2}{(2k+1)\pi},\,\frac{-2}{(2k+1)\pi} \,:\,k\geq0 \big\}\,\cup\,\{0\}.$ Let us prove that
\begin{equation}\label{sinmdim}
\mathrm{mdim}_M([-1,1],d,T)\,=\,\frac{1}{2}\,=\,\dim_B(A).
\end{equation}

Firstly, an argument similar to the one in the proof of Proposition~\ref{lowerboundgauss} yields $$\underline{\mathrm{mdim}}_M([-1,1],d,T)\,\geq\,1/2.$$

For the converse inequality, we cannot directly apply Proposition~\ref{upperboundgauss} since the map $T$ does not satisfy (C3). The main idea is still to make use of Lemma~\ref{specrad} and \eqref{eqcircle}. However, we will estimate the number of $1$'s in a column by making use of the Second Order Mean Value Theorem.\footnote{ If $f\colon[a,b]\to\RR$ is twice differentiable, then $f(b)\,+\,f(a)\,-\,2\,f(\frac{a+b}{2}) \,=\, \frac{1}{4}\,(b-a)^2\,f''(c)$ for some $c\in(a,b).$}

For each $n\geq1$, fix the scale $\ep_n\,=\,\Big\lceil \frac{3\pi(4n+1)(4n+5)}{8} \Big\rceil ^{-1}$ and consider an $\ep_n-$grid. To estimate the spectral radius
of $\Ga^{\ep_n} $ - defined in \eqref{gammaep} - we restrict our attention to the first quadrant, as the remaining ones can be dealt in an analogous way. 

Consider the positive maximum points $x_k=2/(4k+1)$, $k\geq0$. Given $y\in(0,1]$ and $L\in(0,1)$ such that $T(y)=\sin(1/y)\geq L$, which implies $|\cos(1/y)|\leq\sqrt{1-L^2}$, we have two possibilities:
\begin{itemize}
    \item[(\emph{i})]$y\in(0,x_0]$

    \smallskip
    
   \noindent Let $k\geq1$ be such that $x_{k+1}\leq y\leq x_{k-1}$. Then,
\begin{align*}
     |f''(y)|\, &=\, \big|y^{-4}\,\sin(1/y)\,+\,2\,y^{-3}\,\cos(1/y)\big| \, \\&\geq\, y^{-4}\,L\,-\,2\,y^{-3}\,\sqrt{1-L^2} \\ & \geq\, x_{k-1}^{-4}\,L\,-\,2\,x_{k+1}^{-3}\,\sqrt{1-L^2} \\& =\, k^4\, \frac{\pi^3}{8} \bigg[ \frac{\pi L}{2}\,(4-\frac{3}{k})^4\,-\,2\,\sqrt{1-L^2}\,\Big(\frac{4}{k^{1/3}}+\frac{5}{k^{4/3}}\Big)^3 \bigg] \\ &\geq\,k^4\, \frac{\pi^3}{8} \bigg[ \frac{\pi L}{2}\,-\,2\,\sqrt{1-L^2}\,9^3 \bigg] 
\end{align*} Thus, $|f''(y)|\,\geq\, k^4$ if $L$ is close enough to $1$.
\smallskip
    \item [(\emph{ii})]$y\in(x_0,1]$    
    \begin{align*}
        |f''(y)|\, &=\, \big|y^{-4}\,\sin(1/y)\,+\,2\,y^{-3}\,\cos(1/y)\big| \, \\&\geq\, y^{-4}\,L\,-\,2\,y^{-3}\,\sqrt{1-L^2} \\ & \geq\,L\,-\,2\,x_{1}^{-3}\,\sqrt{1-L^2} \\& =\,L\,-\,2\Big(\frac{5\pi}{2}\Big)^{3}\,\sqrt{1-L^2}.
    \end{align*} Thus, $|f''(y)|\,\geq\, 1/2$ if $L$ is close enough to $1$.
\end{itemize}

\smallskip

From now on, we fix $L\in(0,1)$ close enough to $1$ so that the two previous estimates hold. Take $n\geq1$ big enough so that $1-\ep_n \gg L$. Let us bound the spectral radius of $\Ga^{\ep_n}$ from above by estimating the number of $1$'s. To do so, we recall from \eqref{eqcircle} that $$r(\Ga^{\ep_n})\,\leq\,  \max_{j} \# \big\{i\,:\,(\Ga^{\ep_n})_{ij}=1\big\}$$ and separate $j$ in $3$ classes. First, note that $\ep_n\,<\,\frac{x_n-x_{n+1}}{3}$ for every $n$. This implies that every interval $[(i-1)\ep_n,i\ep_n]$, $i=1,...,\,\lceil \frac{x_n}{\ep_n} \rceil$, contains at least two $x_k$'s. Therefore, \begin{equation}\label{ones}
    (\Ga^{\ep_n})_{i,j}\,=\,1\qquad \forall i=1,...,\,\bigg\lceil \frac{x_n}{\ep_n} \bigg\rceil,\,\,\forall j=1,...,\frac{1}{\ep_n}.
\end{equation}

\noindent\textbf{Case 1:} $j=1/\ep_n$. 

\smallskip

\noindent For each $k\geq0$, consider the nearest points $a_k<x_k<b_k$ such that $T(a_k)=T(b_k)=1-\ep_n$. Then $$\{x\in(0,1)\,:\,T(x)\geq1-\ep_n\}\,=\,\bigcup_{k\geq0}[a_k,b_k].$$ By the Second Order Mean Value Theorem, for every $k\geq0$ there exists $c_k\in(a_k,b_k)$ such that $|b_k-a_k|^2\,\leq\,\frac{8\ep_n}{|f''(c_k)|}$. Moreover, by the choice of $L$ we have $|b_0-a_0|\,\leq\,4\sqrt{\ep_n}$ and $|b_k-a_k|\,\leq\,\frac{\sqrt{8}\,\sqrt{\ep_n}}{k^2}$, for every $k\geq1$. Combining this information with \eqref{ones}, we obtain 
\begin{align}\label{topline}
    \# \big\{i\,:\,(\Ga^{\ep_n})_{i,\frac{1}{\ep_n}}=1\big\}\,\,&\leq\,\,\bigg\lceil \frac{x_n}{\ep_n} \bigg\rceil\,+\,\sum_{k=0}^n \Big( \bigg\lfloor \frac{|b_k-a_k|}{\ep_n} \bigg\rfloor\,+\,2 \Big)\nonumber \\ &\leq\,\,\bigg\lceil \frac{x_n}{\ep_n} \bigg\rceil\,+\,2(n+1)\,+\,\frac{1}{\sqrt{\ep_n}}\Big(4\,+\, \sqrt{8}\,\sum_{k=1}^\infty k^{-2} \Big)\nonumber  \\ &\leq\,\, C\,n,
\end{align} where $C>0$ is a constant independent of $n$.

\medskip

\noindent\textbf{Case 2:} $j\,=\,\lceil L/\ep_n\rceil\,+\,1\,,...,\,1/\ep_n\,-\,1.$

\smallskip

\noindent For each $k\geq0$, consider the nearest points $a^1_k<b_k^1<x_k<a_k^2<b_k^2$ such that $T(a^1_k)=T(b_k^2)=(j-1)\ep_n$ and $T(a^2_k)=T(b_k^1)=j\,\ep_n$. Then $$\big\{x\in(0,1)\,:\,(j-1)\ep_n\,\leq\,T(x)\,\leq\,i\,\ep_n\big\}\,=\,\bigcup_{k\geq0}[a_k,b_k].$$ By the Second Order Mean Value Theorem, for every $k\geq0$ and $s=1,2$ there exists $c^s_k\in(a^s_k,b^s_k)$ such that $|b^s_k-a_k^s|^2\,\leq\,\frac{8\ep_n}{|f''(c^s_k)|}$. Again, by the choice of $L$ we have $|b^s_0-a^s_0|\,\leq\,4\sqrt{\ep_n}$ and $|b^s_k-a^s_k|\,\leq\,\frac{\sqrt{8}\,\sqrt{\ep_n}}{k^2}$, for every $k\geq1$. Joining this information with \eqref{ones}, we obtain 
\begin{align}\label{upperlines}
    \# \big\{i\,:\,(\Ga^{\ep_n})_{i,j}=1\big\}\,\,&\leq\,\,\bigg\lceil \frac{x_n}{\ep_n} \bigg\rceil\,+\,\sum_{s=1}^2\sum_{k=0}^n \Big( \bigg\lfloor \frac{|b^s_k-a^s_k|}{\ep_n} \bigg\rfloor\,+\,2 \Big)\nonumber \\ &\leq\,\,\bigg\lceil \frac{x_n}{\ep_n} \bigg\rceil\,+\,4(n+1)\,+\,\frac{2}{\sqrt{\ep_n}}\Big(4\,+\, \sqrt{8}\,\sum_{k=1}^\infty k^{-2} \Big)\nonumber  \\ &\leq\,\, C\,n,
\end{align}for every $j=\lceil L/\ep_n\rceil\,+\,1,...,\frac{1}{\ep_n}\,-\,1,$ where $C>0$ is a constant independent of $n$.

\medskip

\noindent\textbf{Case 3:} $j\,=\,1,...,\,\lceil L/\ep_n\rceil.$

\smallskip

\noindent By taking $n$ large enough, we may assume that $L\,+\,\ep_n\,<\,\frac{1+L}{2}$. Thus, $T(x)=\sin(1/x)\leq \frac{1+L}{2} $, for every $x\in(0,1)$ such that $T(x)\in[(j-1)\ep_n,j\,\ep_n]$ for some $j\,=\,1,...,\,\lceil L/\ep_n\rceil$. Hence, there exists some $M=M(L)\in\NN$ such that, for every such $x$, we have $$|T'(x)|\,=\,\frac{|\cos(1/x)|}{x^2}\,\geq\,\sqrt{1-\Big(\frac{1+L}{2}\Big)^2} \,\geq\, M^{-1}.$$ By the proof of Proposition~\ref{upperboundgauss} we obtain 
\begin{equation}\label{lowerlines}
     \# \big\{i\,:\,(\Ga^{\ep_n})_{i,j}=1\big\}\,\,\leq\,\, 8(M+2)\,n,
\end{equation} for every $j\,=\,1,...,\,\lceil L/\ep_n\rceil$.

\medskip

Bringing together \eqref{topline}, \eqref{upperlines} and \eqref{lowerlines}, and proceeding analogously in the other quadrants, we conclude that there exists a constant $C>0$, which depends only on $L$, such that $$r(\Ga^{\ep_n})\,\leq\, \max_{j} \# \{i\,:\,(\Ga^{\ep_n})_{ij}=1\}\,\leq\, C\,n,$$ for every large enough $n\in\NN$.

We are ready to complete the computation of the upper bound of the metric mean dimension of $T$. Given $\ep>0$, take $n=n(\ep)$ such that $\ep_{n}\leq\ep<\ep_{n-1}$. Then $$  \frac{h_{\ep}(\Ga_\NN,d^\NN,\si_\NN)}{\log(1/\ep)}\,\,\leq\,\,  \frac{h_{\ep_{n}}(\Ga_\NN,d^\NN,\si_\NN)}{\log(1/\ep_{n-1})} \,\,\leq\,\, \frac{r(\Ga^{\ep_n})}{\log(1/\ep_{n-1})}\,\,\leq\,\,\frac{\log(C\,n)}{\log(1/\ep_{n-1})}, $$ and so $$\overline{\mathrm{mdim}}_M([-1,1],d,T)\,=\,\overline{\mathrm{mdim}}_M(\Ga_\NN,d^\NN,\si_\NN)\,\leq\,\limsup_{n\,\to\,+\infty}\frac{\log(C\,n)}{\log(1/\ep_{n-1})}\,=\,\frac{1}{2}.$$
\end{example}

\medskip

We finish this section by observing that all arguments used to compute the metric mean dimension of the map $T$ remain valid if we consider a smooth perturbation $\tilde{T}$ of $T$ in small neighbourhoods $(1-\delta,1]$ and $[-1,-1+\delta)$, so that $\tilde{T}(1)=\tilde{T}(-1)=-1$ and $|\tilde{T}'(x)|>M^{-1}$ for every $x\,\in\,[-1,-1+\delta)\,\cup\,(1-\delta,1]$. In this case, we obtain a map satisfying (C1) and (C2) such that the equality \eqref{eqgausslike} in Theorem~\ref{gausslikethm} still holds, even though the assumption (C3) does not.

\subsection{A remark on the definition of metric mean dimension of discontinuous maps}\label{rmkdiscontdef}    Given a discontinuous map $T$, its associated transition set $\Ga=\graph(T)$ is not closed. In order to associate a subshift of compact type to $T$, we could have followed two possible paths: one option would be to take \emph{the closure of the transition set} and afterwards consider its associated sequence space $(\overline{\Ga})_\NN$; another way (which is the one we chose) is to consider \emph{the closure of the sequence space} $\overline{(\Ga_\NN)}$. At first glance, the former might appear more satisfactory since, by Theorem~\ref{n=z}, we would obtain a notion of metric mean dimension that could be expressed both by unilateral and bilateral sequences. Let us exemplify why it would not yield a \emph{meaningful} notion of metric mean dimension.

Consider the set $A=\{e^{-n}:n\geq0\}\cup\{0\}$ and let $T_A\colon [0,1]\to[0,1]$ be any map satisfying conditions (C1)-(C3). The arguments in the proof of Theorem~\ref{gausslikethm} do not depend whether we define $\mathrm{mdim}_M([0,1],d,T_A)$ in terms of $\overline{(\Ga(T_A)_\NN)}$ or $\big(\overline{\Ga(T_A)}\big)_\NN$. This indicates that \begin{equation}\label{meaningfulvalue}
\mathrm{mdim}_M([0,1],d,T_A)\,=\,\dim_B(A,d)\,=\,0
\end{equation}

\noindent is the right value for it. Yet, if we consider an interval map $T$ such that $T|_{[0,e^{-1})}=(T_A)|_{[0,e^{-1})}$ and $T|_{[e^{-1},1]}\equiv0$, then any meaningful notion of metric mean dimension compatible with \eqref{meaningfulvalue} should also be zero. However, since we have $$\overline{\Ga(T)}\,\supset\, \big([e^{-1},1]\times\{0\}\big)\,\cup\,\big(\{0\}\times[e^{-1},1]\big),$$ then Proposition~\ref{delayformula} implies that $$\mathrm{mdim}_M\big( (\overline{\Ga(T)})_\NN, d^\NN,\si_\NN \big)\,\geq\,\frac{1}{2}.$$
This shows that taking the closure of the transition set prior to generating the sequences may create too many new orbits that are not present in the original dynamics.

\subsection*{Acknowledgments} This work commenced during the author's Master's degree program at Universidade Federal do Rio Grande do Sul, Brazil, under the supervision of Professor Alexandre Baraviera, to whom he is deeply grateful. The author also thanks Professor Paulo Varandas for the insightful conversations and Professor Maria Carvalho for the meticulous reading and numerous suggestions that significantly improved this manuscript. The author has been awarded a PhD grant by FCT- Funda\c c\~ao para a Ci\^encia e a Tecnologia, with reference UI/BD/152212/2021.

\end{document}